\documentclass[11pt]{amsart}
\usepackage{amsmath, amssymb, array}
\usepackage[top=30truemm,bottom=30truemm,left=30truemm,right=30truemm]{geometry} 
\usepackage{mathtools}
\usepackage[all]{xy}
\usepackage{ascmac}
\usepackage{mathtools}
\usepackage{mathrsfs}
\usepackage{color}
\usepackage{xcolor}
\usepackage[colorlinks=true, linkcolor=blue, citecolor=blue]{hyperref}
\usepackage{amscd}
\usepackage{fancyhdr}
\usepackage{amsrefs}
\usepackage{cleveref}
\usepackage{chngcntr}
\usepackage{apptools}
\usepackage{comment}
\usepackage{listings}
\usepackage{booktabs}
\usepackage{multirow}
\usepackage{url}
\usepackage[shortlabels]{enumitem}

\newcommand{\Q}{\mathbb{Q}}

\DeclareMathOperator{\rank}{rank} %rank
\DeclareMathOperator{\GL}{GL}
\DeclareMathOperator{\OA}{OA}

\DeclareMathOperator{\GDD}{GDD}

\DeclareMathOperator{\PG}{PG}

\DeclareMathOperator{\PTE}{PTE}

\DeclareMathOperator{\Orb}{Orb}

\title{Combinatorial designs and the Prouhet--Tarry--Escott problem}

\author{Munenori Inagaki$^{*}$}
\email{2225063t@stu.kobe-u.ac.jp}
\address{${}^*$Faculty of Engineering, Kobe University, 1-1 Rokkodai, Nada, Kobe 657-8501, Japan}

\author{Hideki Matsumura$^{**}$}
\email{hmatsumura@tmu.ac.jp}
\address{${}^{**}$Graduate School of Science, Tokyo Metropolitan University, 1-1 Minami-Osawa, Hachioji-shi, Tokyo 192-0397, Japan}

\author{Masanori Sawa$^{*}$}
\email{sawa@people.kobe-u.ac.jp}
 \address{${}^*$Graduate School of System Informatics, Kobe University, 1-1 Rokkodai, Nada, Kobe 657-8501, Japan}

\author{Yukihiro Uchida$^{**}$}
\email{yuchida@tmu.ac.jp}
\address{${}^{**}$Graduate School of Science, Tokyo Metropolitan University, 1-1 Minami-Osawa, Hachioji-shi, Tokyo 192-0397, Japan}

 \thanks{This research is supported by KAKENHI 20K03517, KAKENHI 24KJ0183, KAKENHI 25K17234 and KAKENHI 50508182 of the Japan Society for the Promotion of Science (JSPS), and by the Early Support Program for Grant-in-Aid for Scientific Research (B) of Kobe University.
This work is supported by the Research Institute for Mathematical Sciences, an International Joint Usage/Research Center located in Kyoto University. } 

\subjclass[2020]{primary 11D72, 05B05; secondary 05E99, 14G05, 51E15}

\keywords{Borwein solution, combinatorial design theory, combinatorial PTE inequality, half-integer design phenomenon, ideal solution, Jacroux partitioning, Lorentz--Alpers--Tijdeman (LAT) construction, proper solution, Prouhet--Tarry--Escott (PTE) problem, tight solution.}

\date{\today} 

\theoremstyle{plain}
 \newtheorem{theorem}{Theorem}[section] 
  \crefname{theorem}{Theorem}{Theorems}
 \AddToHook{env/theorem/begin}{\crefalias{theorem}{theorem}}
 \newtheorem{proposition}[theorem]{Proposition}
 \crefname{proposition}{Proposition}{Propositions}
 \AddToHook{env/proposition/begin}{\crefalias{theorem}{proposition}}
 \newtheorem{lemma}[theorem]{Lemma}
 \crefname{lemma}{Lemma}{Lemmas}
 \AddToHook{env/lemma/begin}{\crefalias{theorem}{lemma}}
 \newtheorem{corollary}[theorem]{Corollary}
  \crefname{corollary}{Corollary}{Corollaries}
 \AddToHook{env/corollary/begin}{\crefalias{theorem}{corollary}}
 
   \crefname{conjecture}{Conjecture}{Conjectures}
 \AddToHook{env/conjecture/begin}{\crefalias{theorem}{conjecture}}
 
 \crefname{question}{Question}{Questions}
 \AddToHook{env/question/begin}{\crefalias{theorem}{question}}
 \newtheorem{problem}[theorem]{Problem}
   \crefname{problem}{Problem}{Problems}
 \AddToHook{env/problem/begin}{\crefalias{theorem}{problem}}
 
    \crefname{notation}{Notation}{Notations}
 \AddToHook{env/notation/begin}{\crefalias{theorem}{notation}}
\crefname{table}{Table}{Tables}

\theoremstyle{definition} 
 \newtheorem{definition}[theorem]{Definition}
  \crefname{definition}{Definition}{Definitions}
 \AddToHook{env/definition/begin}{\crefalias{theorem}{definition}}
 \newtheorem{example}[theorem]{Example}
   \crefname{example}{Example}{Examples}
 \AddToHook{env/example/begin}{\crefalias{theorem}{example}}
 \newtheorem{remark}[theorem]{Remark}
   \crefname{remark}{Remark}{Remarks}
 \AddToHook{env/remark/begin}{\crefalias{theorem}{remark}}
    
   \crefname{claim}{Claim}{Claims}
 \AddToHook{env/claim/begin}{\crefalias{theorem}{claim}}

\begin{document}

\maketitle

\begin{abstract}
This is the first paper that provides a systematic treatment of the $r$-dimensional PTE problem in additive number theory, abbreviated by $\PTE_r$, through its connection with combinatorial design theory, the branch of combinatorial mathematics that deals with finite set systems or arrangements with the ^^ balancedness' conditions. 
We first propose a combinatorial reconsideration of the definition of ^^ proper solution' introduced by Alpers and Tijdeman (2007), and then prove a fundamental lower bound for the size of such solutions.
We exhibit high-dimensional minimal solutions with respect to the fundamental bound, which inherently have the structure of distinctive block designs or orthogonal arrays (OAs).
Next, we develop a powerful method for constructing $\PTE_r$ solutions via various classes of combinatorial designs such as block designs and OAs.
Furthermore, we explore two dimension-lifting methods for constructing $\PTE_r$ solutions:
one is a combinatorial composition that produces $\PTE_r$ solutions by embedding lower-dimensional solutions into OAs with $r$ columns, and the other is a recursive technique in which a $\PTE_r$ solution is constructed by taking the Cartesian product of two lower-dimensional solutions.
It is emphasized that our results generalize many previous works, including a measure-theoretic construction by Lorentz (1949) and its geometric analog by Alpers and Tijdeman (2007), a key lemma in Jacroux's work (1995) on the construction of sets of integers with equal power sums, and the famous Borwein solution and its two-dimensional extension by Matsumura and Sawa (2025).
In addition, we prove a characterization theorem for ideal solutions of the $\PTE_1$ and discuss the connection with a curious phenomenon, called half-integer design, that is rarely reported in the combinatorial design theory or spherical design theory.
\end{abstract}

\tableofcontents

%%%%%%%%%%%%%%%%%%%%%%%%%%%%%%%%%%%%%%%%%%%%%%%%%%%%%%%%%%%%%
%%%%%%%%%%%%%%%%%%%%%%%%%%%%%%%%%%%%%%%%%%%%%%%%%%%%%%%%%%%%%
\section{Introduction} \label{sect:Intro}

The {\it $r$-dimensional PTE problem of degree $m$ and size $n$} (see \cite{Alpers-Tijdeman}), abbreviated by $\PTE_r$, asks whether there exists a disjoint pair of multisets
$A = \{ (a_{11}, \ldots, a_{1r}), \ldots, (a_{n1}, \ldots, a_{nr})\}$, $B = \{ (b_{11}, \ldots, b_{1r}), \ldots, (b_{n1}, \ldots, b_{nr})\} \subset \mathbb{Z}^r$, such that
\[
\sum_{i=1}^n a_{i1}^{k_1} \cdots a_{ir}^{k_r} = \sum_{i=1}^n b_{i1}^{k_1} \cdots b_{ir}^{k_r},
\]
where the summations are taken over all combinations of nonnegative integers $k_1, \ldots, k_r$ satisfying the condition $1 \leq k_1+ \cdots +k_r \leq m$.
For a solution $(A,B)$ of the above equations, we often use the notation $[A]=^n_m [B]$ or $[\mathbf{a}_1,\ldots,\mathbf{a}_n] =_m^n [\mathbf{b}_1,\ldots,\mathbf{b}_n]$, where $\mathbf{a}_i = (a_{i1}, \ldots, a_{ir})$ and $\mathbf{b}_i = (b_{i1}, \ldots, b_{ir})$.
It is well known (cf.~\cite[Chapter 11, Theorem 1]{Borwein2002},~\cite[Theorem 4.1.9]{Ghiglione}) that the inequality $n \ge m+1$ holds whenever $[A]=^n_m [B]$ holds; a solution with equality is called {\it ideal}.

The $\PTE_1$ is a classical problem in additive number theory, as exemplified by the Euler--Goldbach solution
\[
[a,b,c,a+b+c]=^4_2 [0,a+b,a+c,b+c].
\]
For a good introduction to the early history on the $\PTE_1$, we refer the reader to~\cite[\S24]{Dickson}.
Among recent contributions by many others, one of the most influential results is the Borwein ideal solution given by
\begin{multline} \label{Borwein}
 [\pm (2a+2b), \pm (-ab-b-a+3), \pm (ab-b-a-3)] \\
=_5^6 [\pm (2b-2a), \pm (ab-b+a+3), \pm (-ab-b+a-3)];
\end{multline}
see for example \cite[p.\ 88]{Borwein2002}.
The $\PTE_1$ also finds applications in several other topics, including the problem of determining whether the chromatic polynomial of a given non-chordal graph has only integer zeros (\cite{HL}), the problem of searching for large twin smooth integers for compact isogeny-based post-quantum protocols (\cite{CMN}), and so on.

Alpers and Tijdeman \cite{Alpers-Tijdeman} generalized the classical $\PTE_1$ to the $\PTE_{\ge2}$ and mainly studied $\PTE_2$, providing a powerful construction which we look into later, through a geometric approach based on the concept of {\it switching components} in the theory of discrete tomography. Afterwards, Ghiglione~\cite{Ghiglione} generalized their ideas and extensively studied discrete-tomographic constructions for the $\PTE_{\geq 3}$; see \cite[Theorems 4.3.17 and 4.3.20]{Ghiglione} for details.
On the other hand, Matsumura and Sawa~\cite{MS2025} investigated a new approach to the $\PTE_2$ via a certain configuration of points on ellipse, called {\it ellipsoidal design}, which was introduced by Pandey \cite{Pandey2022} as a generalization of {\it spherical design} in $\mathbb{R}^2$.
They provided a method for constructing $\PTE_2$ solutions from a disjoint pair of ellipsoidal designs (see \cite[Proposition 5.1]{MS2025}).

In the present paper, we mainly consider the $\PTE_{\ge 3}$ and develop novel connection with {\it combinatorial designs}, including {\it orthogonal arrays (OAs)}, {\it block $t$-designs}, and so on.
We refer the reader to \Cref{subsect:design} for the definition of such `{\it designs on discrete spaces}'.
As far as the authors know, the only known purely-combinatorial approach for constructing $\PTE_{\ge 3}$ solutions is based on two-colorings of the binary hypercube $\{0,1\}^r$ due to Alpers and Tijdeman \cite[Theorem 12]{Alpers-Tijdeman} (see also~\cite[p.171]{AL2015}), which originally goes back to a celebrated work by Lorentz \cite[Section 4.3]{Lorentz1949} in measure theory.
For some discrete tomographic extensions of Lorentz's work (which is not the subject of this paper), see Kuba--Herman \cite{KH1999} and references therein.
What is remarkable here is that such {\it Lorentz--Alpers--Tijdeman (LAT) construction} can also be interpreted as a partition of the trivial OA of strength $r$ into two OAs of strength $r-1$, as will be explained in \cref{thm:LAT0} and \cref{2col} later. 
Thus, a main aim of the present paper is to generalize the LAT construction for OAs of arbitrary sizes, and even for other classes of combinatorial designs.

This is the first paper that provides a systematic treatment of the $\PTE_r$ through the connection with combinatorial design theory.
Although combinatorial approaches to the $\PTE_r$ via hypercube partitions have appeared in the literature, none have systematically developed a combinatorial design-based approach.
\footnote{The problem of relating $\PTE_r$ to ^^ designs on discrete spaces' was first proposed by Akihiko Yukie, and is one of the motivations for this research.} 
This paper not only offers number-theoretic interpretations of various concepts and methods in combinatorial design theory, but also demonstrates how these methods can provide constructive approaches to the PTE problem. 

This paper is organized as follows: 
\Cref{sect:preliminary} first makes a quick review of basic facts on the $\PTE_r$ and then discusses how proper solutions should be defined from the combinatorial perspective.
The same section also provides a fundamental lower bound for the size of proper solutions of the $\PTE_r$, which we call the {\it combinatorial PTE inequality} (\cref{thm:NewBound}), and then introduces some important classes of combinatorial designs. 
\Cref{sect:minimal} provides a proof of \cref{thm:NewBound} and then exhibits  minimal solutions with respect to our inequality, which inherently have the structure of
distinctive $t$-designs or OA related to the LAT construction, as will be seen from~\cref{thm:tight2,ex:tight4,ex:tight4-2}.
Sections \ref{sect:design} through \ref{sect:curious} are the main body of this paper.
\Cref{sect:design} provides many criteria to directly construct $\PTE_r$ solutions by taking a disjoint pair of combinatorial designs (see~\cref{OAtoPTE,GDDtoPTE,TdesigntoPTE}); in particular, such a construction for OAs (\cref{OAtoPTE}) substantially generalizes the LAT construction (\cref{thm:LAT0}).
\Cref{sect:lifting} explores two types of dimension-lifting constructions, which we call {\it combinatorial array lifting} (\cref{thm:OAL1,thm:OAL3}) and {\it Cartesian product lifting}  (\cref{thm:cp_lift}), respectively.
The former produces $\PTE_r$ solutions by embedding $\PTE_1$ solutions into OAs with $r$ columns, generalizing the Borwein solution (\ref{Borwein}) and its two-dimensional extension by Matsumura and Sawa~\cite[Proposition 5.12]{MS2025} (see \cref{thm:GeneralizedBorwein1,rem:2DBorwein1}).
The latter is a recursive method in which a $\PTE_r$ solution is constructed by taking the Cartesian product of two lower-dimensional solutions, generalizing Jacroux's result \cite[Lemma 1]{Jacroux1995} concerning sets of integers with equal power sums (\cref{cor:jac}).
\Cref{sect:curious} establishes a characterization theorem for ideal solutions of the $\PTE_1$ (\cref{lin}) and then discusses the connection with a curious phenomenon, called {\it half-integer design} (\cref{t+1/2}), that is not normally observed in combinatorial design theory or spherical design theory (\cref{HID}). 
Finally, \Cref{sect:conclusion} closes this paper with concluding remarks and future works.

%%%%%%%%%%%%%%%%%%%%%%%%%%%%%%%%%%%%%%%%
%%%%%%%%%%%%%%%%%%%%%%%%%%%%%%%%%%%%%%%%
\section{Preliminaries} \label{sect:preliminary}

In \Cref{sect:PTEr}, we first review basic facts on the $\PTE_r$ and then discuss how
proper solutions should be defined for the $\PTE_{\ge 2}$ from the combinatorial perspective.
Next in \Cref{subsect:design}, we briefly introduce various classes of combinatorial designs such as orthogonal arrays (OAs) and block $t$-designs, each of which is applicable for `directly' constructing high-dimensional PTE solutions in \Cref{sect:design}; in \Cref{sect:lifting}, we present dimension-lifting constructions by combining lower-dimensional PTE solutions and OAs.

%%%%%%%%%%%%%%%%%%%%%%%%%%%%%%%%%%%%%%%%%%%
%%%%%%%%%%%%%%%%%%%%%%%%%%%%%%%%%%%%%%%%%%%
\subsection{The $r$-dimensional PTE problem and combinatorially proper solutions}\label{sect:PTEr}

We start with the precise definition of the $\PTE_r$.

\begin{problem} [{\cite{Alpers-Tijdeman}}]  \label{PTEr}
The {\it $r$-dimensional PTE problem $(\PTE_r)$ of degree $m$ and size $n$} asks whether there exists a disjoint pair of multisets
\[
A = \{(a_{11}, \ldots, a_{1r}), \ldots, (a_{n1}, \ldots, a_{nr})\}, \quad B= \{(b_{11}, \ldots, b_{1r}), \ldots, (b_{n1}, \ldots, b_{nr})\} \subset \mathbb{Z}^r
\]
such that
\[
\sum_{i=1}^n \prod_{j=1}^r a_{ij}^{k_j} =\sum_{i=1}^n \prod_{j=1}^r b_{ij}^{k_j},
\]
where the summations are taken over all combinations of nonnegative integers $k_1, \ldots, k_r$ satisfying the condition $1 \leq k_1+ \cdots +k_r \leq m$.
We often use the notation $[A]=^n_m [B]$ or $[\mathbf{a}_1,\ldots,\mathbf{a}_n] =_m^n [\mathbf{b}_1,\ldots,\mathbf{b}_n]$ for the solution $(A,B)$.
\end{problem}

\begin{remark}
\begin{enumerate}
\item[(i)] We can also define the $\PTE_r$ over $\mathbb{Q}$.
Note that an integer solution of the $\PTE_r$ can be obtained from a rational solution by clearing denominators.
In the following, we consider rational solutions unless otherwise stated.
\item[(ii)]
Given $\mathbf{a}_1, \ldots, \mathbf{a}_n, \mathbf{b}_1, \ldots, \mathbf{b}_{n-1} \in \mathbb{Q}^r$, we can readily obtain a $\PTE_r$ solution of degree $1$ as $A=\{\mathbf{a}_i \}$ and $B=\{\mathbf{b}_i \}$ with $\mathbf{b}_n:=\sum_{i=1}^n \mathbf{a}_i-\sum_{i=1}^{n-1} \mathbf{b}_i$.
Therefore, in this paper, we are mainly concerned with the case where $m \geq 2$. 
\end{enumerate}
\end{remark}

Most of the solutions appearing in this paper have the following two properties:

\begin{definition} \label{def:symmetric}
\begin{enumerate}[label=$(\roman*)$] 
\item (Cf.\ \cite[p.\ 86]{Borwein2002}) A solution $[\mathbf{a}_1, \ldots, \mathbf{a}_n] =_m^n [\mathbf{b}_1, \ldots, \mathbf{b}_n]$ is {\it symmetric} if
\begin{align*}
\{\mathbf{a}_i  \mid 1 \leq i \leq n \}=\{-\mathbf{a}_i  \mid 1 \leq i \leq n\},
&& \{\mathbf{b}_i  \mid 1 \leq i \leq n \}=\{-\mathbf{b}_i  \mid 1 \leq i \leq n\}
\end{align*}
as multisets.
\item (\cite[p.\ 26]{MLSU})
A solution $[\mathbf{a}_1, \ldots, \mathbf{a}_n] =_m^n [\mathbf{b}_1, \ldots, \mathbf{b}_n]$ is {\it linear} if there exists $S \subset \{1,\ldots,n\}$ such that $\sum_{i \in S} \mathbf{a}_i = \sum_{i \in S} \mathbf{b}_i = \boldsymbol{0}$.
\end{enumerate}
\end{definition}

\begin{remark}[{\cite[Definition 2.4]{PIL2023}}]
Prugsapitak et al.\ \cite{PIL2023} use the term {\it reduced solution}, instead of linear solution for the matrix ring analog of the classical PTE problem.
\end{remark}

The following is a two-dimensional extension of the Borwein solution (\ref{Borwein}):

\begin{theorem} [{\cite[Proposition 5.12]{MS2025}, Two-dimensional Borwein solution}] \label{2DBorwein} 
There exists an ideal solution of the $\PTE_2$ given by
\begin{equation*}
[\pm \mathbf{a}_1, \pm \mathbf{a}_2, \pm \mathbf{a}_3]=_5^6 [\pm \mathbf{b}_1, \pm \mathbf{b}_2, \pm \mathbf{b}_3],
\end{equation*}
where
\small
\begin{align*}
\mathbf{a}_1:=(2(b+a),-ab-b-a+3), && \mathbf{a}_2:= (-ab-b-a+3, ab-b-a-3), \\
\mathbf{a}_3 := (ab-b-a-3, 2(b+a)), && \mathbf{b}_1:=(2(b-a),ab-b+a+3),  \\
\mathbf{b}_2:=(ab-b+a+3,-ab-b+a-3) , && \mathbf{b}_3 := (-ab-b+a-3, 2(b-a)).
\end{align*}
\normalsize
\end{theorem}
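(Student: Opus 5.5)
The plan is to reduce everything to the classical Borwein solution (\ref{Borwein}) by exploiting the cyclic structure of the points. Set $x_1:=2(a+b)$, $x_2:=-ab-b-a+3$, $x_3:=ab-b-a-3$ and $y_1:=2(b-a)$, $y_2:=ab-b+a+3$, $y_3:=-ab-b+a-3$. Then $\mathbf{a}_i=(x_i,x_{i+1})$ and $\mathbf{b}_i=(y_i,y_{i+1})$, with indices taken mod $3$. A direct check gives
\[
x_1+x_2+x_3=0=y_1+y_2+y_3 .
\]
So in the notation of \cref{def:symmetric} both triples are linear. The (\ref{Borwein}) identity $[\pm x_i]=^6_5[\pm y_i]$ gives in particular $\sum x_i^2=\sum y_i^2$.

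\textbf{Step 1 (odd degrees).} Because both multisets are symmetric under $\mathbf{v}\mapsto -\mathbf{v}$, every monomial $u^k v^l$ with $k+l$ odd sums to zero on each side. It therefore suffices to treat $k+l\in\{2,4\}$, where the sum over $\pm\mathbf{a}_i$ is $2\sum_{i=1}^3 x_i^k x_{i+1}^l$.

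\textbf{Step 2 (even degrees via symmetric functions).} For three variables, each cyclic sum $C_{k,l}(x):=\sum_i x_i^k x_{i+1}^l$ splits as
\[
C_{k,l}=\tfrac12\bigl(C_{k,l}+C_{l,k}\bigr)+\tfrac12\bigl(C_{k,l}-C_{l,k}\bigr).
\]
The first part is symmetric. The second is alternating, so it is divisible by the Vandermonde $(x_1-x_2)(x_2-x_3)(x_3-x_1)$, which has degree $3$.
\begin{itemize}
\item In degree $2$ the alternating part vanishes identically.
\item In degree $4$ the alternating part is a constant times the Vandermonde times $e_1$, hence zero because $e_1=0$.
\end{itemize}
Thus each relevant $C_{k,l}$ is a symmetric polynomial of degree $2$ or $4$ in $x_1,x_2,x_3$. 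With $e_1=0$, such a polynomial reduces to a universal multiple of $e_2$ (degree $2$) or of $e_2^2$ (degree $4$), since the only other degree-$4$ monomial $e_1e_3$ vanishes. For example, $C_{1,1}=e_2$, $C_{2,2}=e_2^2-2e_1e_3=e_2^2$, and $C_{4,0}=p_4=2e_2^2$.

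Finally, $e_2=-\tfrac12 p_2$ when $e_1=0$. The same formulas hold for the $y_i$, and $p_2(x)=p_2(y)$ by (\ref{Borwein}), so all the degree-$2$ and degree-$4$ sums agree on the two sides.

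\textbf{Step 3 (disjointness).} We check that the multisets $\{\pm\mathbf{a}_i\}$ and $\{\pm\mathbf{b}_i\}$ are disjoint for generic rational $a,b$. It suffices that no first coordinate $\pm x_i$ equals any $\pm y_j$ as polynomials in $a,b$; each such equality is a nontrivial polynomial condition, so a generic choice avoids all of them. Since there are $6$ points on each side and the degree is $5$, the solution is ideal.

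The main obstacle is Step 2 for the non-symmetric cyclic sums $C_{3,1}$ and $C_{1,3}$. The key observation is that their difference is (Vandermonde)$\cdot e_1$ up to a constant, and this vanishes precisely because the Borwein triples are linear. If that argument felt shaky, the fallback is to substitute $x_3=-x_1-x_2$ and verify by a direct computation that $C_{3,1}=C_{1,3}=-(x_1^2+x_1x_2+x_2^2)^2$.
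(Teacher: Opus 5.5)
Your proof is correct, but it is organized differently from the paper's. The paper quotes \cref{2DBorwein} from \cite{MS2025} and recovers it only later, as the projection onto the first two coordinates of the three-dimensional solution in \cref{thm:GeneralizedBorwein1} (see \cref{rem:2DBorwein1}). That theorem is proved in two steps. First, the Type-I OA lifting \cref{thm:OAL3} is applied to $\OA_I(6,3,3,3)_1$, using $[x_1,x_2,x_3]=_2^3[y_1,y_2,y_3]$, $p_4^{(\mathbf{x})}=p_4^{(\mathbf{y})}$ and linearity; this gives a $12$-point $\PTE_3$ solution over all signed permutations of the $x_i$. Second, linearity is used to check that cyclic and anti-cyclic orbits give equal sums, e.g.\ $\sum_i x_ix_{i+1}^3=\sum_i x_i^3x_{i+1}$, which halves the solution to the $6$ cyclic points.

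Your splitting $C_{k,l}=\frac12(C_{k,l}+C_{l,k})+\frac12(C_{k,l}-C_{l,k})$ is the same two-step mechanism in compressed form. The symmetric half is, up to a constant factor, the sum over the Type-I OA orbit. The vanishing of the alternating half is exactly the paper's cyclic/anti-cyclic identity.

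What your version gains:
\begin{itemize}
\item You never leave dimension $2$.
\item You replace the paper's ``we can verify'' with an actual reason: an alternating quartic in three variables is a constant times the Vandermonde times $e_1$.
\item You see that $p_4^{(\mathbf{x})}=p_4^{(\mathbf{y})}$ is automatic once $e_1=0$ and $p_2^{(\mathbf{x})}=p_2^{(\mathbf{y})}$, which is the $n=3$ instance of \cref{lin} and \cref{rem:curious1}. So only linearity and equal second power sums are really used, not the full Borwein identity.
\item You treat disjointness explicitly, which the paper leaves implicit.
\end{itemize}

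What the paper's route gains is context. It presents the example as an instance of the general OA-lifting framework and produces the three-dimensional solution (\ref{eq:GeneralizedBorwein1}) at the same time. Your Vandermonde argument, by contrast, is specific to three variables and even degrees at most $4$, where the alternating part is forced to carry a factor $e_1$.
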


\begin{remark}
\label{rem:linear0}
The two-dimensional Borwein solution is clearly symmetric and also has the linearity condition that
\begin{equation} \label{eq:linear0}
\mathbf{a}_1+\mathbf{a}_2+\mathbf{a}_3= \mathbf{b}_1+\mathbf{b}_2+\mathbf{b}_3=\mathbf{0}.
\end{equation}
\end{remark}

The following fact is simple but useful for further arguments below:

\begin{proposition} [{\cite[Proposition 4.1.3]{Ghiglione}}] \label{ts}
Assume $[A]=_m^n [B]$ for the $\PTE_r$.
Then it holds that for any $M \in \GL_r(\mathbb{Q})$,
\[
[\{ \mathbf{a}M \mid \mathbf{a} \in A\}]=_m^n [\{ \mathbf{b}M \mid \mathbf{b} \in B\}].
\]
\end{proposition}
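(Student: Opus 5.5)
The plan is to reduce the statement to the observation that the solution condition is equivalent to the equality
\[
\sum_{\mathbf{a}\in A} f(\mathbf{a}) = \sum_{\mathbf{b}\in B} f(\mathbf{b})
\]
for every polynomial $f \in \mathbb{Q}[x_1,\ldots,x_r]$ of total degree at most $m$. Indeed, the defining equations say exactly that this holds for every monomial $x_1^{k_1}\cdots x_r^{k_r}$ with $1 \le k_1+\cdots+k_r \le m$. The constant monomial is automatic, because both multisets have size $n$. Both sides are $\mathbb{Q}$-linear in $f$, so the equality extends from monomials to the whole space $\mathcal{P}_m$ of polynomials of degree at most $m$.

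Next I fix $M \in \GL_r(\mathbb{Q})$ and a monomial $x_1^{k_1}\cdots x_r^{k_r}$ with $1\le \sum k_j \le m$. Composing with the linear map gives the polynomial $g(\mathbf{x}) := \prod_{j=1}^r (\mathbf{x}M)_j^{k_j}$, where each coordinate $(\mathbf{x}M)_j = \sum_i x_i M_{ij}$ is a linear form with rational coefficients. Hence $g$ is homogeneous of degree $\sum k_j \le m$ and lies in $\mathcal{P}_m$. The first step then gives
\[
\sum_{\mathbf{a}\in A} \prod_j (\mathbf{a}M)_j^{k_j} = \sum_{\mathbf{b}\in B} \prod_j (\mathbf{b}M)_j^{k_j},
\]
which is exactly the required power-sum identity for the transformed multisets. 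Both transformed multisets still have $n$ elements, counted with multiplicity.

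It remains to check disjointness, and this is the only place where invertibility of $M$ enters. The map $\mathbf{x}\mapsto \mathbf{x}M$ is a bijection of $\mathbb{Q}^r$. So if $\mathbf{a}M = \mathbf{b}M$ for some $\mathbf{a}\in A$ and $\mathbf{b}\in B$, then $\mathbf{a}=\mathbf{b}$, contradicting the disjointness of $A$ and $B$. Since we work with rational solutions, no integrality issue arises; if integer solutions are wanted, one clears denominators as in the preceding remark.

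I expect no real obstacle. The only point needing care is the linearity reduction: the set of admissible test polynomials must be closed under composition with linear maps, which is why it should be phrased as all of $\mathcal{P}_m$ rather than as individual monomials.
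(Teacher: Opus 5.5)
Your proof is correct. The paper gives no proof of its own here; it simply cites Ghiglione's Proposition~4.1.3, and your argument is the standard one behind that result: the substitution $\mathbf{x}\mapsto\mathbf{x}M$ sends each monomial of degree $d\le m$ to a homogeneous polynomial of degree $d$, so the power-sum identities transfer by linearity, and injectivity of $\mathbf{x}\mapsto\mathbf{x}M$ preserves disjointness.

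One small simplification: $M$ has no translation part, so a degree-$d$ monomial with $d\ge 1$ only produces monomials of degree $d$ after substitution. Your appeal to the constant monomial is therefore harmless but unnecessary.
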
 

The following construction method for the $\PTE_2$, developed independently by Lorentz~\cite[Section 4.3]{Lorentz1949} and Alpers-Tijdeman~\cite[Theorem 12]{Alpers-Tijdeman}, gives a strong motivation for the present paper.

\begin{theorem}[{Lorentz--Alpers--Tijdeman (LAT) construction}] \label{thm:LAT0}
Let $(U_1,V_1)$ be a disjoint pair of two-element subsets of $\mathbb{Q}^2$ of type
\[
U_1 := \{ (0,0), (\phi_1+\phi_2, \psi_1+\psi_2)\}, \; V_1 := \{ (\phi_1,\psi_1), (\phi_2, \psi_2)\} \subset \mathbb{Q}^2.
\]
For $i \ge 1$, we inductively define
\[
U_{i+1}:=V_i \cup (\theta_{i+1} (\phi_{i+1},\psi_{i+1})+U_i), \; V_{i+1}:=U_i \cup (\theta_{i+1} (\phi_{i+1},\psi_{i+1})+V_i) \subset \mathbb{Q}^2,
\]
where $\theta_i$ is a sufficiently large rational number such that
\[
V_i \cap (\theta_{i+1} (\phi_{i+1},\psi_{i+1})+U_i)= U_i \cap (\theta_{i+1} (\phi_{i+1},\psi_{i+1})+V_i)=\emptyset.
\]
Then $(U_k,V_k)$ is a $\PTE_2$ solution of degree $k$ and size $2^k$ for any positive integer $k$.
\end{theorem}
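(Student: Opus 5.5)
Induction on $k$ is the natural route, organized through signed measures. For a finite multiset $X\subset\mathbb{Q}^2$ and a polynomial $f\in\mathbb{Q}[x,y]$, set $S_X(f):=\sum_{\mathbf{x}\in X} f(\mathbf{x})$. The statement $[U_k]=^{2^k}_k[V_k]$ then says $S_{U_k}(f)=S_{V_k}(f)$ for every $f$ of total degree at most $k$. The constant term is automatic, since $|U_k|=|V_k|=2^k$ by induction. Disjointness of $U_k$ and $V_k$ is exactly what the choice of $\theta_{k}$ is for, so it can be taken as given (with $(\phi_1,\psi_1)\neq(0,0)$ etc.\ implicit in $U_1\cap V_1=\emptyset$). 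It is therefore enough to prove the identity of power sums. Throughout I write $\mathbf{v}_i:=\theta_i(\phi_i,\psi_i)$ for $i\ge2$.

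\textbf{Base case $k=1$.} Only linear $f$ matter. Both $U_1$ and $V_1$ have two points with coordinate sum $(\phi_1+\phi_2,\psi_1+\psi_2)$, so $S_{U_1}(f)=S_{V_1}(f)$ for all $f$ of degree at most $1$.

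\textbf{Inductive step.} Assume $S_{U_k}(f)=S_{V_k}(f)$ for all $\deg f\le k$, and let $g$ be any polynomial with $\deg g\le k+1$. By the recursion,
\[
S_{U_{k+1}}(g)-S_{V_{k+1}}(g)=\sum_{\mathbf{u}\in U_k}\bigl(g(\mathbf{u}+\mathbf{v}_{k+1})-g(\mathbf{u})\bigr)-\sum_{\mathbf{w}\in V_k}\bigl(g(\mathbf{w}+\mathbf{v}_{k+1})-g(\mathbf{w})\bigr).
\]
Define the finite difference $h(\mathbf{x}):=g(\mathbf{x}+\mathbf{v}_{k+1})-g(\mathbf{x})$. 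Expanding $g(\mathbf{x}+\mathbf{v})$ by the binomial theorem, the top-degree homogeneous part of $g$ cancels, so $h$ is a polynomial of degree at most $k$. The right-hand side above equals $S_{U_k}(h)-S_{V_k}(h)$, which vanishes by the induction hypothesis. Hence $S_{U_{k+1}}(g)=S_{V_{k+1}}(g)$ for all $\deg g\le k+1$. This covers in particular all monomials $x^{k_1}y^{k_2}$ with $1\le k_1+k_2\le k+1$.

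\textbf{Remaining checks.} The multiset sizes double at each step, so $|U_k|=|V_k|=2^k$. The disjointness hypothesis on $\theta_{i+1}$ guarantees $U_{k+1}\cap V_{k+1}=\emptyset$ as multisets, provided one also uses $U_k\cap V_k=\emptyset$, which holds by induction. The $\theta$-translate terms are disjoint from each other because they are translates of the disjoint sets $U_k$ and $V_k$.

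\textbf{Main obstacle.} The only real content is that the finite difference lowers the degree by one. This is immediate from the binomial expansion, so I expect no serious difficulty. The only care needed is to treat the equalities as multiset identities, so that repeated points are counted correctly.
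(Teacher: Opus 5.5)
Your finite-difference induction is correct. The identity $S_{U_{k+1}}(g)-S_{V_{k+1}}(g)=S_{U_k}(h)-S_{V_k}(h)$, with $h=g(\cdot+\mathbf{v}_{k+1})-g$ of degree at most $k$, is exactly what is needed; constants are included since $|U_k|=|V_k|$.

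This is a different route from the paper. The paper gives no proof of this theorem; it is quoted from Lorentz and Alpers--Tijdeman. It only explains the construction afterwards (Example~\ref{2col}, Remark~\ref{rem:OA}(i)) as halving the hypercube. In that view, $U_k$ and $V_k$ are the images of the two parity classes of $\{0,1\}^{k+1}$ under $\varepsilon\mapsto\sum_j\varepsilon_j\mathbf{w}_j$, where the $\mathbf{w}_j$ are $(\phi_1,\psi_1)$, $(\phi_2,\psi_2)$ and the successive translation vectors. Each parity class is an orthogonal array of strength $k$, so the power sums agree by OA regularity as in Theorem~\ref{OAtoPTE}, and precomposing with a linear map does not raise degrees.

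Your argument is more elementary. It shows directly that one doubling step raises the degree of any equal-size PTE pair in any $\mathbb{Q}^r$ by one. The paper's viewpoint is what suggests replacing the hypercube halving by arbitrary disjoint OAs.

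The step that fails is your disjointness check. With $\mathbf{v}=\mathbf{v}_{k+1}$,
\[
U_{k+1}\cap V_{k+1}=(V_k\cap U_k)\cup\bigl(V_k\cap(\mathbf{v}+V_k)\bigr)\cup\bigl((\mathbf{v}+U_k)\cap U_k\bigr)\cup\bigl((\mathbf{v}+U_k)\cap(\mathbf{v}+V_k)\bigr).
\]
Beyond $U_k\cap V_k=\emptyset$, what you need is $U_k\cap(\mathbf{v}+U_k)=V_k\cap(\mathbf{v}+V_k)=\emptyset$. The displayed hypothesis instead controls $V_k\cap(\mathbf{v}+U_k)$ and $U_k\cap(\mathbf{v}+V_k)$. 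These are overlaps inside $U_{k+1}$ and inside $V_{k+1}$, and they do not give disjointness.

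Here is a counterexample. Take $(\phi_1,\psi_1)=(1,0)$, $(\phi_2,\psi_2)=(2,0)$ and $\theta_2=3/2$. Then $U_1=\{(0,0),(3,0)\}$, $V_1=\{(1,0),(2,0)\}$ and $\mathbf{v}_2=(3,0)$. Both displayed intersections are empty, yet $(3,0)\in U_2\cap V_2$. Similarly, at a later step, $(\phi_{k+1},\psi_{k+1})=(0,0)$ satisfies the displayed condition for every $\theta_{k+1}$ but gives $U_{k+1}=V_{k+1}$.

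The right condition is $\mathbf{v}_{k+1}\notin(U_k-U_k)\cup(V_k-V_k)$. This holds for all sufficiently large $\theta_{k+1}$ as long as $(\phi_{k+1},\psi_{k+1})\neq(0,0)$. So disjointness must come from reading ``sufficiently large'' in this sense, with nonzero directions assumed. It cannot come from the displayed condition plus induction.
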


A purely-combinatorial interpretation of the LAT construction is provided in Example~\ref{2col} later.

Now, Alpers and Tijdeman \cite[p.\ 404]{Alpers-Tijdeman} define a ^^ trivial solution' 
as
\[
A = \{ (a_1, \ldots, a_1), \ldots, (a_n, \ldots, a_n) \}, \quad B = \{ (b_1, \ldots, b_1), \ldots, (b_n, \ldots, b_n) \}.
\]
According to this definition, the solution
\[
[(0,0,0), (2,1,0), (1,2,0)] =_2^3 [(1,0,0), (2,2,0), (0,1,0)]
\]
(cf.\ \cite[p.\ 405]{Alpers-Tijdeman}) is
not trivial, for which Alpers and Tijdeman use the term {\it proper solution}.
However, this still seems to be a bit strange since the third components of vectors do not make sense.
Moreover, most of the $\PTE_2$ solutions $(A,B)$, given in \cite{Alpers-Tijdeman,Ghiglione,MS2025}, naturally satisfy the {\it full rank condition}, namely $
\rank A=\rank B=r$; a detailed explanation of $\rank A$ and $\rank B$ is given in \cref{proper} below.
The solutions constructed by Ghiglione \cite[Theorems 4.3.17 and 4.3.20]{Ghiglione} also satisfy the full rank condition.

Thus in this paper, we propose a reconsideration of the definition of
the triviality or properness of solutions.
Below we write $M^{\top}$ for the transpose of a matrix $M$.
 
\begin{definition}
[{(Combinatorially) proper solution}] \label{proper}
Let $A = \{\mathbf{a}_1, \ldots,\mathbf{a}_n\}, B = \{\mathbf{b}_1,\ldots,\mathbf{b}_n\}$ be multisets of $r$-dimensional rational row vectors, and suppose that $[A] =_m^n [B]$.
Then the solution $(A,B)$ is {\it (combinatorially) proper} if $\rank A = \rank B = r$, where $A$ and $B$ are regarded as the $n \times r$ matrices whose rows are $(a_{i1},\ldots,a_{ir})$ and $(b_{i1},\ldots,b_{ir})$, respectively.
\end{definition}

As will be realized in the present paper, such proper solutions arise naturally in combinatorics, especially in combinatorial design theory and graph theory, which is the reason for the term ^^ {\it combinatorially proper}'.
%%--- 以下ではPSでCPSを意味することに言及．  memo1 by SAWA20260318
From now on, we will refer to ^^ combinatorially proper solutions' simply as ^^ proper solutions'.
%%---

Here are two typical examples that appear several times throughout this paper.

\begin{example} [{Halving the trivial OA}] \label{ex:halving}
The following is a proper solution of the $\PTE_3$:
\[ 
 [(0,0,0), (0,1,1), (1,0,1), (1,1,0)] =_2^4 [(0,0,1), (0,1,0), (1,0,0), (1,1,1)].
\]
This example comes from a disjoint pair of orthogonal arrays (OAs) of strength $2$ (see \cref{OA}), which is further equivalent to a two-coloring of the binary cube 
$\{0,1\}^3$.
Just for reader's information, essentially the same example can also be found in the context of  {\it separating invariants} in the invariant ring theory (see \cite[p.\ 506, Theorem 2]{LR2021}).
\end{example}

\begin{example}
[{Doubling the Fano plane}] \label{ex:2731}
Let $\mathfrak{T}_7 := \langle (1,\ldots, 7) \rangle$ be a cyclic subgroup of the symmetric group $\mathfrak{S}_7$.
We denote by $\Orb_{\mathfrak{T}_7}(\mathbf{x})$ the $\mathfrak{T}_7$-orbit of a vector $\mathbf{x} \in \mathbb{Q}^7$.
We define
\begin{align*} 
A =\Orb_{\mathfrak{T}_7}((1,1,0,1,0,0,0)), \quad B =\Orb_{\mathfrak{T}_7}((0,0,1,0,1,1,0)).
\end{align*}
These two orbits come from the {\it Fano plane}, a finite projective plane with $7$ points and $7$ lines (see \cref{P2}), which yield a proper $\PTE_7$ solution of degree $2$ and size $7$; see \cref{prop:Hadamard} for more details.
\end{example}

The following gives a fundamental inequality for the size of proper $\PTE_r$ solutions of even degree. 
The matrices $N_A$ and $N_B$, given in Theorem~\ref{thm:NewBound} below, are inspired by the {\it $t$-th incidence matrix} of $t$-designs; see \cite[Theorem 19.8]{LintWilson2001} or the proof of \cref{cor:NewBound2} below.
Substantially the same idea can also be found in the process of deriving Rao's inequality for orthogonal arrays (cf.\ \cite[Theorem 2.1]{HSS1999}) or in a geometric approach to discrete tomography (cf.\ \cite[Definition 4.3.3]{Ghiglione}).

\begin{theorem} [{Combinatorial PTE inequality}] \label{thm:NewBound}
Let $m \geq 2 $.
Let $\Omega$ be a subset of $\mathbb{Q}^r$, and let $A, B \subset \Omega$.
Suppose that $(A,B) = (\{ \mathbf{a}_i \}_{i=1}^n, \{ \mathbf{b}_i \}_{i=1}^n)$ is a proper $\PTE_r$ solution of degree $2t$ and size $n$.
Let $\{\mathbf{t}_i \}$ be a basis of the space $\mathcal{P}_t(\Omega)$ of all polynomials of degree $\leq t$ defined on $\Omega$.
We define two matrices $N_A$ and $N_B$ of size $\dim_{\mathbb{Q}} \mathcal{P}_t(\Omega) \times n$ as follows:
\begin{align} \label{NAB}
N_A = (\mathbf{t}_i (\mathbf{a}_j) ), && N_B = (\mathbf{t}_i (\mathbf{b}_j)). 
\end{align}
Moreover, we suppose that $\rank {[N_A \; N_B]}=\dim_{\mathbb{Q}} \mathcal{P}_t(\Omega)$.
Then it holds that
\begin{equation} \label{eq:bound2}
n \ge \dim_{\mathbb{Q}} \mathcal{P}_t(\Omega).
\end{equation}
\end{theorem}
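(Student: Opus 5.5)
The idea is to compare the Gram matrices $N_A N_A^{\top}$ and $N_B N_B^{\top}$ and exploit that they coincide. For basis polynomials $\mathbf{t}_i,\mathbf{t}_k$ of degree at most $t$, the product $\mathbf{t}_i\mathbf{t}_k$ has degree at most $2t$. Expanding it into monomials $x_1^{k_1}\cdots x_r^{k_r}$ with $k_1+\cdots+k_r\le 2t$, the defining equations of a $\PTE_r$ solution of degree $2t$ give
\[
\sum_{j=1}^n \mathbf{t}_i(\mathbf{a}_j)\mathbf{t}_k(\mathbf{a}_j)=\sum_{j=1}^n \mathbf{t}_i(\mathbf{b}_j)\mathbf{t}_k(\mathbf{b}_j).
\]
The constant monomial gives $n=n$ trivially, since both sides have size $n$. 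Hence $N_AN_A^{\top}=N_BN_B^{\top}=:G$, a $D\times D$ matrix with $D=\dim_{\mathbb{Q}}\mathcal{P}_t(\Omega)$.

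There is one subtlety. The $\mathbf{t}_i$ are functions on $\Omega$, so $\mathbf{t}_i\mathbf{t}_k$ should be viewed through a polynomial representative on $\mathbb{Q}^r$ of degree at most $t$ each. The product of these representatives is a polynomial of degree at most $2t$ that agrees with $\mathbf{t}_i\mathbf{t}_k$ at all points of $A\cup B\subset\Omega$, so the identity above holds.

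Next we use the rank hypothesis. Over $\mathbb{Q}$ (a subfield of $\mathbb{R}$), for a real matrix $N$ we have $\rank NN^{\top}=\rank N$, because $NN^{\top}x=0$ implies $\|N^{\top}x\|^2=0$. Therefore $\rank N_A=\rank G=\rank N_B$. Moreover
\[
[N_A\;N_B][N_A\;N_B]^{\top}=N_AN_A^{\top}+N_BN_B^{\top}=2G,
\]
so $\rank G=\rank [N_A\;N_B]=D$ by assumption. Hence $\rank N_A=D$. Since $N_A$ has only $n$ columns, this gives $n\ge D$, which is \eqref{eq:bound2}.

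The main obstacle is making the Gram-matrix identity rigorous when $\Omega$ is a proper subset and $\mathcal{P}_t(\Omega)$ consists of restricted functions. This is handled by choosing polynomial lifts as above; the choice of lift does not matter, since only values on $\Omega$ enter $N_A$ and $N_B$.

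The properness and $m\ge 2$ hypotheses are not needed beyond guaranteeing the setting, that is, $t\ge1$. The positivity argument for $\rank NN^{\top}=\rank N$ is what uses the real, rational ground field.
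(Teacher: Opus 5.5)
Your proof is correct and follows the paper's route: the paper also notes that the degree-$2t$ conditions make the columns of $N_A$ and $N_B$ a degree-$2$ solution, i.e.\ $N_AN_A^{\top}=N_BN_B^{\top}$, and then applies \cref{SAB} to get $\rank N_A=\rank[N_A\;N_B]=\dim_{\mathbb{Q}}\mathcal{P}_t(\Omega)\le n$. The only difference is that the paper proves that rank step by orthogonalizing coordinates, whereas you prove it directly from $\rank NN^{\top}=\rank N$ and $[N_A\;N_B][N_A\;N_B]^{\top}=2N_AN_A^{\top}$; your remark that properness is never used also matches the paper's argument.
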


\begin{remark} \label{rem:binary0}
Inequality (\ref{eq:bound2}) depends on the choice of the domain $\Omega$ on which polynomials are defined.
In algebraic combinatorics, particularly in design theory and related areas, $\mathfrak{S}_r$-invariant finite domains such as the binary hypercube $C^r := \{0,1\}^r$ or the binary sphere $S_k^{r-1} := \{ (x_1,\ldots,x_r) \in \{0,1\}^r \mid \sum_{i=1}^r x_i^2=k \}$, are often candidates for $\Omega$.
This paper also focuses on such binary domains, especially when dealing with minimal examples with respect to our inequality.
The nonbinary case is also of theoretical interest, but a detailed discussion will be left as a subsequent work.
\end{remark}

The following corollary to \cref{thm:NewBound} for the binary spheres is used for the arguments in Subsection~\ref{subsect:Hadamard}, where we write $D^{\{k\}}$ for the set of all $k$-element subsets of a finite set $D$.

\begin{corollary} \label{cor:NewBound2}
Let $t,k,r$ be positive integers such that $t \le k \le r-t$.
Let $\Omega = S_k^{r-1}$ and $A, B \subset \Omega$.
Suppose that $(A,B)$ is a proper $\PTE_r$ solution of degree $2t$ and size $n$.
With the notation $N_A,N_B$ of \cref{thm:NewBound}, we moreover suppose that  $\rank {[N_A \; N_B]} = \dim_{\mathbb{Q}} \mathcal{P}_t(\Omega)$. 
Then it holds that
\begin{equation} \label{eq:binary1}
n \ge \binom{r}{t}. 
\end{equation}
\end{corollary}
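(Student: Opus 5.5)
This follows from \cref{thm:NewBound} once we know that $\dim_{\mathbb{Q}} \mathcal{P}_t(S_k^{r-1}) = \binom{r}{t}$ when $t \le k \le r-t$. All hypotheses of \cref{thm:NewBound} are part of the statement: $\Omega = S_k^{r-1}$, a proper solution of degree $2t$, and the rank condition on $[N_A\; N_B]$. So \cref{thm:NewBound} gives $n \ge \dim_{\mathbb{Q}} \mathcal{P}_t(\Omega)$, and it remains only to compute this dimension.

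\emph{Spanning set.} Every point of $S_k^{r-1}$ is a $0/1$-vector, so $x_i^2 = x_i$ on $\Omega$. Hence every polynomial of degree $\le t$ restricts on $\Omega$ to a linear combination of the squarefree monomials $x_T := \prod_{i\in T} x_i$ with $|T| \le t$. We also have $\sum_i x_i = k$ on $\Omega$, which gives, for $|T| = s < t$,
\[
\sum_{j \notin T} x_{T\cup\{j\}} = (k-s)\,x_T .
\]
Since $s < t \le k$, the coefficient $k-s$ is nonzero, so $x_T$ lies in the span of the $x_U$ with $|U| = s+1$. By induction on $s$ downward from $t$, the $\binom{r}{t}$ functions $x_U$ with $|U| = t$ span $\mathcal{P}_t(\Omega)$.

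\emph{Linear independence.} Identify $\Omega$ with the family of $k$-subsets $K$ of $\{1,\ldots,r\}$. Then $x_U(K) = 1$ exactly when $U \subseteq K$. Independence of the functions $x_U$, $|U| = t$, therefore says that the $\binom{r}{t} \times \binom{r}{k}$ inclusion matrix $W_{tk}$ between $t$-subsets and $k$-subsets has full row rank. This is the classical Gottlieb--Kantor theorem (see \cite[Theorem 19.8]{LintWilson2001} and the surrounding discussion), which holds precisely when $t \le \min(k, r-k)$. That condition is exactly the hypothesis $t \le k \le r-t$.

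Together these give $\dim_{\mathbb{Q}} \mathcal{P}_t(\Omega) = \binom{r}{t}$, and \eqref{eq:binary1} follows. The only non-routine ingredient is the full row rank of $W_{tk}$. If a self-contained argument is preferred, I would compute $W_{tk}W_{tk}^{\top}$: it lies in the Johnson scheme, and its eigenvalues $\binom{k-i}{t-i}\binom{r-t-i}{k-i}$ for $0 \le i \le t$ are all positive when $t \le k \le r-t$. Positive definiteness of $W_{tk}W_{tk}^{\top}$ then gives the full row rank directly.
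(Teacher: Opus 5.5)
Your proof is correct and takes the paper's route: apply \cref{thm:NewBound} and identify $\dim_{\mathbb{Q}}\mathcal{P}_t(S_k^{r-1})$ with the rank $\binom{r}{t}$ of the $t$-subsets versus $k$-subsets inclusion matrix, using the same cited rank theorem; your use of $x_i^2=x_i$ and $\sum_i x_i=k$ just spells out the paper's remark that lower-degree polynomials become homogeneous of degree $t$ on $S_k^{r-1}$. One slip in your optional self-contained aside: the eigenvalues of $W_{tk}W_{tk}^{\top}$ are $\binom{k-i}{t-i}\binom{r-t-i}{k-t}$ for $0\le i\le t$, whereas your factor $\binom{r-t-i}{k-i}$ is incorrect in general (for $r=5$, $t=1$, $k=2$, $i=0$ it gives $12$ instead of $8$), but the correct values are still all positive exactly when $t\le k\le r-t$, so your conclusion stands.
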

\begin{proof} 
Let $R$ be an $r$-element set, and consider the incidence structure $\mathcal{D} = (R, R^{\{k\}})$.
The $t$-th incidence matrix $N_t$ of $\mathcal{D}$ is defined to be a $(0,1)$-matrix with rows (resp.\ columns) indexed by all $t$-element (resp.\ $k$-element) subsets of $R$, and with entry $1$ in row $T \in R^{\{t\}}$ and column $K \in R^{\{k\}}$ if $T \subseteq K$, $0$ otherwise (cf.~\cite[Theorem 19.8]{LintWilson2001}).
It is well known (cf.~\cite[Theorem 19.8]{LintWilson2001},~\cite[p.\ 252, Remark]{Wilson1984}) that
$\rank N_t = \binom{r}{t} $ if $t \le k \le r-t$.
Note that $\dim_{\mathbb{Q}} \mathcal{P}_t(S_k^{r-1}) = \rank N_t$, since any polynomial of degree $<t$ is written as a homogeneous polynomial of degree exactly $t$ on $S_k^{r-1}$.
By the assumption, we have $\dim_{\mathbb{Q}} \mathcal{P}_t(\Omega)=\rank [N_A \; N_B]$.
The corollary thus follows by \cref{thm:NewBound}.
\end{proof}

In \Cref{sect:minimal}, we give a proof of \cref{thm:NewBound} together with some minimal examples with respect to our bound (\ref{eq:binary1}).

%%%%%%%%%%%%%%%%%%%%%%%%%%%%%%%%%%%%%%%%%%%%%%
%%%%%%%%%%%%%%%%%%%%%%%%%%%%%%%%%%%%%%%%%%%%%%
\subsection{Combinatorial arrays and block designs} \label{subsect:design} 

In this subsection, we introduce two important classes of combinatorial designs, namely 
combinatorial arrays such as orthogonal arrays (OA), block designs such as group divisible designs (GDD), each playing significant roles hereafter. 
Some of the information, given below without detailed explanations, can be found in \cite{HSS1999,LintWilson2001}.

%%%%%%%%%%%%%%%%%%%%%%%%%%%%%%%%%%%%%%%%%%%%%%%%%%%%%
%%%%%%%%%%%%%%%%%%%%%%%%%%%%%%%%%%%%%%%%%%%%%%%%%%%%%
\subsubsection{Combinatorial array} \label{subsubsect:OA}

\begin{definition} [{\cite[p.\ 1]{HSS1999} Orthogonal array (OA)}] \label{def:OA}
Let $s$, $t$, $\lambda$, $r$, $\ell$ be positive integers.
An {\it orthogonal array (OA) of strength $t$, index $\lambda$ and $s$ levels} is an $\ell \times r$ array (matrix) with $s$ symbols such that the rows of any $\ell \times t$ subarray consist of $\lambda$ copies of all $s^t$ ordered $t$-tuples of $s$ symbols.
Such an array is denoted by $\OA(\ell,r,s,t)_{\lambda}$.
\end{definition}

\begin{example}
[Halving the trivial OA, {\cref{ex:halving} continued}] \label{OA}
Two $4 \times 3$ arrays given by
\begin{align*}
{\begin{bmatrix}
0& 0 & 1 & 1\\
0& 1 & 0 & 1\\
0& 1 & 1 & 0
\end{bmatrix}}^{\top}, \quad
{\begin{bmatrix}
0& 0 & 1 & 1\\
0& 1 & 0 & 1\\
1& 0 & 0 & 1
\end{bmatrix}}^{\top}
\end{align*}
are both $\OA(4,3,2,2)_1$.
Note that if one regards $\{0,1\}$ as the finite field $\mathbb{F}_2$ of order $2$, then the left one is a two-dimensional subspace of $\mathbb{F}_2^3$.
Such OAs are often called {\it linear}~\cite[Definition 3.4]{HSS1999}.
\end{example}

It is remarked that two OAs given in \cref{OA} have no rows in common. 
To explain more general situations, we introduce the notion of disjoint OAs.

\begin{definition} [{Disjoint OA, \cite[pp.\ 243--244]{CD2007}}] \label{def:disjointOA}
A family of $\alpha$ $\OA(\ell,r,s,t)_\lambda$ is called {\it disjoint} if any two of them do not have a row in common.
In particular when $\alpha = s^r/\ell$, this is called a {\it large set}.
\end{definition}

\begin{remark} \label{rem:resilient}
Let $r,m,t$ be positive integers such that $r \geq m+t$.
A function 
\[
f: \mathbb{F}_2^r \rightarrow\mathbb{F}_2^m
\]
is {\it $t$-resilient} if the inverse image $f^{-1}(y_1,\ldots,y_m)$ forms the rows of an $\OA(2^{r-m},r,2,t)_{2^{r-m-t}}$ for all $(y_1,\ldots,y_m) \in \mathbb{F}_2^m$ (see {\cite[pp.\ 243--244]{HSS1999}}).
A $t$-resilient function is equivalent to a large set of $\OA(2^{r-m}, r, 2,t)_{2^{r-m-t}}$  (\cite[p.\ 244]{HSS1999}). 
Given a linear $\OA(2^{r-m}, r, 2, t)_{2^{r-m-t}}$, one can easily obtain a large set by taking the $2^m$ disjoint copies (translates) of the original.
The reader can find a huge number of linear OAs in \cite{HSS1999}.
\end{remark}

The following gives a graph-theoretic interpretation of the LAT construction (\cref{thm:LAT0}):

\begin{example}
[\cref{OA}, continued]
\label{2col}
The binary hypercube $C^r = \{0,1\}^r$ is a trivial $\OA(2^r,r,2,r)_1$.
Let $A$ (resp.\ $B$) be the set of all $(0,1)$-vectors with an even (resp.\ odd) number of $1$'s.
Then it is not entirely obvious but shown that $A$ and $B$ are $\OA(2^{r-1}, r, 2, r-1)_1$, respectively. 
We remark that a partition of $C^r$ into $A$ and $B$ is equivalent to a two-coloring of the hypercube $C^r$. 
The idea of constructing $\PTE_{\ge 2}$ solutions via two-colorings of the hypercube provides a graph-theoretic understanding of the LAT construction.
\end{example}

The following is a variation of the concept of OA.

\begin{definition}
[{\cite[Definition 6.41]{HSS1999} Type-I OA}] \label{def:OAII}
Let $s$, $t$, $\lambda$, $r$, $\ell$ be positive integers such that $s \ge t$.
An {\it orthogonal array of Type I} of strength $t$, index $\lambda$ and $s$ levels is an $\ell \times r$ array (matrix) with $s$ symbols such that the rows of any $\ell \times t$ subarray consist of $\lambda$ copies of all $s!/(s-t)!$ permutations of $t$ distinct symbols. Such an array is denoted by $\OA_I(\ell,r,s,t)_{\lambda}$.
\end{definition}

\begin{example} \label{ex:OAII}
Two matrices defined by
\begin{align*} {\begin{bmatrix}
0&1&2\\
1&2&0
\end{bmatrix}}^{\top},
\qquad
{\begin{bmatrix}
0&1&2&1&2&0\\
1&2&0&0&1&2\\
2&0&1&2&0&1
\end{bmatrix}}^{\top}
\end{align*}
are $\OA_I(3,2,3,1)_1$ and $\OA_I(6,3,3,3)_1$, respectively.
\end{example}

We now look at a different type of combinatorial array from OA.

\begin{definition}
A {\it Latin square of order $\ell$} is an $\ell \times \ell$ array with $\ell$ symbols such that each symbol appears exactly once in each row and column, respectively.
\end{definition}

\begin{example} \label{ex:LS}
Two matrices defined by
\[
L_1=
\begin{bmatrix}
1 & 2\\
2 & 1
\end{bmatrix}, \quad
L_2=
\begin{bmatrix}
1 & 3 & 2\\
2 & 1 & 3\\
3 & 2 & 1
\end{bmatrix}
\]
are Latin squares of order $2$ and $3$, respectively.
\end{example}

Combinatorial arrays given above can be applied to produce $\PTE_r$ solutions in Subsections~\ref{subsect:CA},~\ref{sect:array_lifting} and~\ref{sect:cp_lift}.

%%%%%%%%%%%%%%%%%%%%%%%%%%%%%%%%%%%%%%%%
%%%%%%%%%%%%%%%%%%%%%%%%%%%%%%%%%%%%%%%%
\subsubsection{Block design} \label{subsubsect:BD}
Throughout this subsection, we use the notation $D^{\{k\}}$
as defined just before \cref{cor:NewBound2}.

\begin{definition} [{Group divisible $t$-design (GDD)}] \label{def:GDD}
Let $\lambda, t, k, g, v, r$ be positive integers such that $t \le k \le g \le r = gv$.
Let $R$ be a finite set of $r$ elements ({\it points}), $\mathcal{B} \subset R^{\{k\}}$ ({\it blocks}), and $\mathcal{G} = \{G_1,\ldots,G_g\} \subset R^{\{v\}}$ ({\it groups}). 
A {\it group divisible design (GDD) of group type $v^g$, strength $t$, block size $k$ and index $\lambda$}, is a triple $(R,\mathcal{B},\mathcal{G})$ which satisfies the following three conditions:
\begin{enumerate}[label=$(\roman*)$]
\item $R=\bigsqcup_{i=1}^g G_i$. 
\item $|B \cap G_i| \leq 1$ for all $B \in \mathcal{B}$ and $G_i \in \mathcal{G}$.
\item $|\{B \in \mathcal{B} \mid T \subset B \}|=\lambda$ for all $T \in R^{\{t\}}$ such that $|T \cap G_i| \leq 1$ for all $i$.
\end{enumerate}
Such a system is denoted by $\GDD_{\lambda}(t,k,r)$ of type $v^g$.
A GDD for $v=1$ is called a {\it (block) $t$-design} and denoted by $t$-$(r,k,\lambda)$. 
\end{definition}

\begin{remark} \label{rem:GDD1}
\cref{def:GDD} is a specialization of the more general definition of GDD adopted in many previous works on combinatorial designs (see e.g.~\cite[p.159]{R1990}).
Such a general treatment of GDD would lead to technical complexity and obscure one of the main aims of this paper, namely, creating the relationship between GDD and $\PTE_r$. This is the reason why \cref{def:GDD} is adopted for the definition of GDD in this paper.
\end{remark}

\begin{example}[{Affine plane}]
Let $R=\{1,2, \ldots, 9\}$, $\mathcal{G} = \{ \{ 1,2,3\}, \{4,5,6\}, \{7,8,9 \} \}$ and
\[
\mathcal{B} = \{ \{1,4,7 \}, \{1,5,8\}, \{1,6,9 \}, \{2,6,8 \}, \{2,4,9\}, \{2,5,7 \}, \{3,5,9 \}, \{3,6,7\}, \{3,4,8 \} \}.
\]
Then $(R,\mathcal{B},\mathcal{G})$ is a $\GDD_1(2,3,9)$ of type $3^3$.
A classical fact in combinatorial design theory (cf.~\cite[Chapter 21]{LintWilson2001}) states that a $\GDD_1(2,q,q^2)$ of type $q^q$ is equivalent to a $2$-$(q^2,q,1)$ design, or the \emph{affine plane of order $q$}.
\end{example}

As in the case of OAs, we introduce the disjointness for GDDs.

\begin{definition}[Disjoint GDD] \label{def:disjointGDD}
A family of $\alpha$ GDDs with the same
parameters,
%%% parematers,
%% parameters
%% より正しい say の使い方   memo2 by SAWA20260318
say $(R,\mathcal{B}_1,\mathcal{G}), \ldots, (R,\mathcal{B}_\alpha,\mathcal{G})$, is called {\it disjoint} if $\mathcal{B}_i \cap \mathcal{B}_j = \emptyset$ for all $i \neq j$.
\end{definition}

\begin{example}[{Doubling the Fano plane, \cref{ex:2731} continued}] \label{P2} 
Let $R = \mathbb{F}_7$, and define $\mathcal{A}, \mathcal{B} \subset R^{\{3\}}$ by
\begin{align*}
\mathcal{A}:=\{ \{i, i+1, i+3 \} \mid i \in R \}, \quad \mathcal{B}:=\{ \{i, i+2, i+3 \} \mid i \in R \}. 
\end{align*}
Then we have a disjoint pair of $2$-$(7,3,1)$ designs $(R,\mathcal{A})$ and $(R,\mathcal{B})$, each providing a combinatorial technique to construct the Fano plane $\PG(2,2)$. 
As seen in \cref{ex:2731}, the block sets $\mathcal{A}$ and $\mathcal{B}$ also have expressions in terms of $(0,1)$-vectors as
$A = {\rm Orb}_{\mathfrak{C}_7}((1,1,0,1,0,0,0))$ and $B = {\rm Orb}_{\mathfrak{C}_7}((0,0,1,0,1,1,0))$, respectively.
\end{example}

In Subsection~\ref{subsect:BD} we present combinatorial constructions of $\PTE_r$ solutions via block designs.

We now look at one more combinatorial concept related to $2$-designs.

\begin{definition}[{Hadamard matrix}]
An $h \times h$ matrix with entries $\pm 1$, say $H$, is called an {\it Hadamard matrix of order $h$} if
\[
H H^{\top} = h I_h,
\]
where $I_h$ is the identity matrix of order $h$.
Any Hadamard matrix can be {\it normalized} such that all entries of the first row and the first column are one.
\end{definition}

It is not entirely obvious but shown (cf.~\cite[Theorem 18.1]{LintWilson2001}) that if $h \ge 3$ and an Hadamard matrix of order $h$ exists, then $h \equiv 0 \pmod{4}$.
A classical fact concerning Hadamard matrices is the equivalence between Hadamard matrices of order $4k$ and $2$-$(4k-1,2k-1,k-1)$ designs (cf.\ ~\cite[Example 19.3]{LintWilson2001}).
A $2$-$(4k-1,2k-1,k-1)$ design is often called an {\it Hadamard $2$-design}.

We now treat one of the most famous constructions of Hadamard $2$-designs.
Let $p$ be a prime number congruent to $3$ modulo $4$.
We define $Q_p$ to be a $p\times p$ matrix whose $(i,j)$-entry is given by the Legendre symbol $(\frac{i-j}{p})$.
Then the matrix defined by
\begin{align} \label{Pp+1}
P_{p+1}:=\begin{bmatrix}
0 & 1 & \cdots & 1\\
1 & & &\\
\vdots& &Q_p  &\\
1& & & 
\end{bmatrix}
\end{align}
is called a {\it Paley matrix} (\cite[pp.\ 202--203]{LintWilson2001}).
The following lemma is used to construct solutions of degree $2$ that are minimal with respect to our bound (\ref{eq:binary1}); see \cref{thm:tight2} for details. 

\begin{lemma} [{Cf.\ \cite[Theorem 18.5 and Example 19.3]{LintWilson2001}}] \label{Paley}
Let $p$ be a prime number such that $p \equiv 3 \pmod{4}$ and $p \ge 7$.
Let $J_p$ be the all-one matrix of order $p$.
Then the following hold:
\begin{enumerate}[label=$(\roman*)$]
\item $Q_p$ is a circulant matrix, and so is $(Q_{p}-I_{p}+J_{p})/2$.
\item $I_{p+1}+P_{p+1}$ is a normalized Hadamard matrix of order $p+1$, or equivalently, $(Q_p - I_p + J_p)/2$ is the incidence matrix of an Hadamard $2$-$(p, (p-1)/2, (p-3)/4)$ design.
\end{enumerate}
\end{lemma}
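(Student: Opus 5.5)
For part (i) I will use only the fact that the $(i,j)$-entry of $Q_p$ is $\left(\frac{i-j}{p}\right)$, which depends only on $i-j \bmod p$. A matrix whose entries depend only on $i-j \pmod p$ is exactly a circulant matrix, so $Q_p$ is circulant. $I_p$ and $J_p$ are trivially circulant, and the circulant matrices form a linear space, so $(Q_p-I_p+J_p)/2$ is circulant as well.

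For part (ii) I will first collect three standard properties of $Q_p$.
\begin{enumerate}[label=(\alph*)]
\item $Q_p$ has zero diagonal, since $\left(\frac{0}{p}\right)=0$.
\item $Q_p^{\top}=-Q_p$, because $p\equiv 3 \pmod 4$ gives $\left(\frac{-1}{p}\right)=-1$.
\item $Q_pJ_p=J_pQ_p=0$, since exactly $(p-1)/2$ nonzero residues are squares.
\end{enumerate}
The main computation is $Q_pQ_p^{\top}=pI_p-J_p$. The diagonal entries equal $p-1$ by (a). For $d\ne 0$ the off-diagonal entry is $\sum_{c}\left(\frac{c}{p}\right)\left(\frac{c+d}{p}\right)$. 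I will evaluate it by writing it as $\sum_{c\neq 0}\left(\frac{1+dc^{-1}}{p}\right)$: as $c$ runs over the nonzero residues, $1+dc^{-1}$ runs over all residues except $1$, so the sum is $0-\left(\frac{1}{p}\right)=-1$. This character-sum evaluation is the only non-formal step, and I expect it to be the main obstacle.

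With these identities, orthogonality of the rows of $I_{p+1}+P_{p+1}$ becomes a block computation using (b), (c) and $Q_pQ_p^{\top}=pI_p-J_p$. In the lower-right block the cross terms $\pm Q_p$ cancel by skew-symmetry, and the remaining $I_p+pI_p-J_p$ combines with the $J_p$ contributed by the border column to give $(p+1)I_p$. The border inner products reduce to row sums of $Q_p$, which vanish by (c), together with a diagonal contribution. As literally written, the first row of $I_{p+1}+P_{p+1}$ is all ones and row $i+1$ is $(1,\,\mathbf q_i+\mathbf e_i)$; their inner product is $1+0+1=2\neq 0$. So I expect the sign convention to need adjusting, for instance negating the border column (giving the skew Paley form $I+S$) or replacing $Q_p$ by $Q_p^{\top}$, and then renormalizing. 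I would fix the convention so that these border terms cancel, and then observe that multiplying rows and columns by $-1$ preserves the Hadamard property, which yields the normalized form.

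For the design statement I will set $N=(Q_p-I_p+J_p)/2$. It is a $(0,1)$-matrix with zero diagonal and with a $1$ exactly where $i-j$ is a nonzero square. Each row then has $(p-1)/2$ ones, giving block size $(p-1)/2$. Expanding $NN^{\top}$ with (b), (c), $JJ=pJ$ and $Q_pQ_p^{\top}=pI-J$ gives
\[
NN^{\top}=\tfrac{p+1}{4}I_p+\tfrac{p-3}{4}J_p .
\]
Hence any two points lie in exactly $(p-3)/4$ common blocks, so $N$ is the incidence matrix of a $2$-$(p,(p-1)/2,(p-3)/4)$ design. Finally, the equivalence with the normalized Hadamard matrix is the standard one from \cite[Example 19.3]{LintWilson2001}: delete the first row and column of the normalized Hadamard matrix and replace $-1$ by $0$, up to complementation, which here is exactly $N$.
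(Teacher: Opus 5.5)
Your proposal is correct and is essentially the standard Paley argument that the paper merely cites from van Lint--Wilson ($Q_pJ_p=J_pQ_p=0$, $Q_p^{\top}=-Q_p$, $Q_pQ_p^{\top}=pI_p-J_p$, then a skew Hadamard matrix $I+S$ and the design read off from its normalized core), and your objection is valid: with $P_{p+1}$ as defined, $I_{p+1}+P_{p+1}$ is not Hadamard, and a correct normalized matrix is $\left[\begin{smallmatrix}1&\mathbf{j}^{\top}\\ \mathbf{j}&Q_p-I_p\end{smallmatrix}\right]$ ($\mathbf{j}$ the all-one column), whose core yields exactly $(Q_p-I_p+J_p)/2$. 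There are two harmless slips, since your direct computation of $NN^{\top}$ already settles the design claim: replacing $Q_p$ by $Q_p^{\top}$ alone does not repair the border inner product (the offending $1+1$ comes from the border entry and the diagonal of $I_p$, not from $Q_p$), and renormalizing the $I+S$ built from $Q_p$ gives core $Q_p^{\top}-I_p$, hence $N^{\top}$ rather than $N$ (build $S$ from $Q_p^{\top}$ to get $N$ itself).
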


%%%%%%%%%%%%%%%%%%%%%%%%%%%%%%%%%%%%%%%%%%%%
%%%%%%%%%%%%%%%%%%%%%%%%%%%%%%%%%%%%%%%%%%%%
\section{Combinatorial PTE inequality and tight solutions} \label{sect:minimal}

In this section we first give a proof of the combinatorial PTE inequality (\cref{thm:NewBound}) and then show some examples of tight solutions with respect to our inequality.

%%%%%%%%%%%%%%%%%%%%%%%%%%%%%%%%%%%%%%%%%%%%
%%%%%%%%%%%%%%%%%%%%%%%%%%%%%%%%%%%%%%%%%%%%
\subsection{Proof of the combinatorial PTE inequality} \label{subsect:proof}

The following lemma is key in the proof of \cref{thm:NewBound}, where a multiset of $r$-dimensional row vectors is identified with a matrix with $r$ columns:

\begin{lemma} \label{SAB}
Let $m \ge 2$.
Suppose that $A$ (resp. $B$) is a multiset of $n$ $r$-dimensional rational row vectors $\mathbf{a}_i$ (resp. $\mathbf{b}_i$) for which $[A]=_m^n [B]$ holds.
Let $[A^{\top} \; B^{\top}]$ be the matrix defined by
\[
 [\mathbf{a}_1^{\top} \; \cdots \; \mathbf{a}_n^{\top} \; \mathbf{b}_1^{\top} \; \cdots \; \mathbf{b}_n^{\top}].
\]
Then it holds that
\[
\rank {[A^{\top} \; B^{\top}]} = \rank A = \rank B.
\]
\end{lemma}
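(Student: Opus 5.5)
We compare Gram-type matrices built from the degree-2 moment equalities. Since $m \ge 2$, the hypothesis $[A]=_m^n[B]$ contains, for every pair $1\le j,l\le r$, the equality
\[
\sum_{i=1}^n a_{ij}a_{il} = \sum_{i=1}^n b_{ij}b_{il}.
\]
These are the monomials $x_jx_l$ of total degree $2$; for $j=l$ this is $x_j^2$. In matrix form this says exactly that
\[
A^{\top}A = B^{\top}B,
\]
where $A,B$ are regarded as $n\times r$ matrices as in \cref{proper}. Only this second-order condition is needed; the first-order conditions are not used.

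Next we use the standard fact that over $\mathbb{Q}$ (or any subfield of $\mathbb{R}$), $\rank M^{\top}M=\rank M$. The reason is that $M^{\top}M\mathbf{x}=\mathbf{0}$ implies $\mathbf{x}^{\top}M^{\top}M\mathbf{x}=\|M\mathbf{x}\|^2=0$, hence $M\mathbf{x}=\mathbf{0}$, so $\ker M^{\top}M=\ker M$. Applying this to $A$ and to $B$ gives
\[
\rank A=\rank A^{\top}A=\rank B^{\top}B=\rank B.
\]

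For the concatenated matrix, note that $[A^{\top}\;B^{\top}]^{\top}$ is the $2n\times r$ matrix $C$ obtained by stacking $A$ on top of $B$. Then
\[
C^{\top}C=A^{\top}A+B^{\top}B=2A^{\top}A.
\]
By the same positivity fact, $\rank [A^{\top}\;B^{\top}]=\rank C=\rank C^{\top}C=\rank A^{\top}A=\rank A$.

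Alternatively, observe that $\ker C=\ker A\cap\ker B$, and show this intersection equals $\ker A$. If $A\mathbf{x}=\mathbf{0}$, then $\|B\mathbf{x}\|^2=\mathbf{x}^{\top}B^{\top}B\mathbf{x}=\mathbf{x}^{\top}A^{\top}A\mathbf{x}=0$, so $B\mathbf{x}=\mathbf{0}$.

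The only point needing care is that positive-definiteness of the sum of squares is used. This is valid because we work over $\mathbb{Q}\subset\mathbb{R}$; it would fail over a general field, for example in characteristic $2$. Otherwise the argument is routine, and the key step is simply recognizing that degree-$2$ equality is the Gram matrix identity.
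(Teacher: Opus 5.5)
Your argument is correct. It reaches the conclusion by a cleaner, coordinate-free route than the paper does.

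The paper first applies elementary row operations (Gram--Schmidt) to the $r\times 2n$ matrix $[A^{\top}\;B^{\top}]$ so that its rows become pairwise orthogonal. This amounts to choosing a basis of $\mathbb{Q}^r$ in which $A^{\top}A+B^{\top}B$ is diagonal. It then invokes \cref{ts} to know that the transformed multisets still satisfy the degree-$2$ equalities. From these it deduces that the rows of $\tilde A^{\top}$ are pairwise orthogonal and that the first $l$ of them are nonzero, and reads off the rank.

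You instead note directly that the degree-$2$ conditions are exactly the Gram identity $A^{\top}A=B^{\top}B$. You then finish with $\ker M^{\top}M=\ker M$, applied to $A$, to $B$, and to the stacked matrix $C$ with $C^{\top}C=2A^{\top}A$.

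Both proofs rest on the same two ingredients: only the degree-$2$ moments are used, and sums of squares are positive definite over $\mathbb{Q}$. Yours needs neither the explicit orthogonalization nor \cref{ts}. Your kernel formulation also makes visible the slightly stronger fact that $\ker A=\ker B$, i.e.\ the $\mathbf{a}_i$ and the $\mathbf{b}_i$ span the same subspace of $\mathbb{Q}^r$.
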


\begin{proof}
Suppose that
$\rank {[A^{\top} \; B^{\top}]}=l$. 
By suitably performing elementary row operators, the $r \times 2n$ matrix $[A^{\top} \; B^{\top}]$ can be transformed as
\[
 [\tilde{A}^{\top} \; \tilde{B}^{\top}] := [ \tilde{\mathbf{a}}_1^{\top} \; \cdots \; \tilde{\mathbf{a}}_n^{\top} \; \tilde{\mathbf{b}}_1^{\top} \; \cdots \; \tilde{\mathbf{b}}_n^{\top}]
\]
such that any distinct rows are mutually orthogonal.
Hence it holds that for any distinct $1\le k,k' \le r$,
\[
0 = \sum_{i=1}^n \tilde{a}_{ik} \tilde{a}_{ik'} + \sum_{i=1}^n \tilde{b}_{ik} \tilde{b}_{ik'},
\]
where $\tilde{\mathbf{a}}_i = (\tilde{a}_{i1},\ldots,\tilde{a}_{ir}),\;\tilde{\mathbf{b}}_i = (\tilde{b}_{i1},\ldots,\tilde{b}_{ir})$ for each $i = 1\ldots,n$.
Since $[\{ \tilde{\mathbf{a}}_i \}_{i=1}^n ] =_m^n [ \{  \tilde{\mathbf{b}}_i \}_{i=1}^n ]$ by \cref{ts}, we have
\[
 \sum_{i=1}^n \tilde{a}_{ik} \tilde{a}_{ik'}=\sum_{i=1}^n \tilde{b}_{ik} \tilde{b}_{ik'},
\]
and hence
\[
0 = 2 \sum_{i=1}^n \tilde{a}_{ik} \tilde{a}_{ik'}.
\]
Since
$\rank {[\tilde{A}^{\top} \; \tilde{B}^{\top}]}=\rank {[A^{\top} \; B^{\top}]} = l$ and the first $l$ rows of $[\tilde{A}^{\top} \; \tilde{B}^{\top}]$ are nonzero, we have
\[
 \sum_{i=1}^n \tilde{a}_{ik}^2 + \sum_{i=1}^n \tilde{b}_{ik}^2 \neq 0 \ \text{ for every $k=1, \ldots, l$}.
\]
Since $[\{ \tilde{\mathbf{a}}_i \}_{i=1}^n ] =_m^n [ \{  \tilde{\mathbf{b}}_i \}_{i=1}^n ]$, we have
\[
 \sum_{i=1}^n \tilde{a}_{ik}^2 =\sum_{i=1}^n \tilde{b}_{ik}^2
\]
and hence
\[
 \sum_{i=1}^n \tilde{a}_{ik}^2 \neq 0 \ \text{ for every $k=1, \ldots, l$}.
\]
Therefore, we have $\rank {[\tilde{\mathbf{a}}_1^{\top} \; \cdots \; \tilde{\mathbf{a}}_n^{\top}]}=l$ and
hence $\rank {[ \mathbf{a}_1^{\top} \; \cdots \; \mathbf{a}_n^{\top} ]}=l$. 
Similarly, we conclude that $\rank {[\mathbf{b}_1^{\top} \; \cdots \; \mathbf{b}_n^{\top} ]}=l$. 
\end{proof}

We now give a proof of \cref{thm:NewBound}.

\begin{proof} [{Proof of \cref{thm:NewBound}}]
Since $[A]=_{2t}^n [B]$ and $\deg (\mathbf{t}_i) \leq \deg (\mathbf{t}_i \mathbf{t}_j) \leq 2t$, 
it holds that for all $1 \leq i \leq j \leq t$,
\begin{align*}
\sum_{k=1}^n \mathbf{t}_i(\mathbf{a}_k) &=\sum_{k=1}^n \mathbf{t}_i(\mathbf{b}_k), \\
\sum_{k=1}^n \mathbf{t}_i(\mathbf{a}_k)\mathbf{t}_j(\mathbf{a}_k) &=\sum_{k=1}^n \mathbf{t}_i(\mathbf{b}_k)\mathbf{t}_j(\mathbf{b}_k).
\end{align*}
With the notation $N_A,N_B$ of (\ref{NAB}), we have $[N_A] =_2^n [N_B]$
for the $\PTE_{\dim_{\mathbb{Q}} \mathcal{P}_t(\Omega)}$.
This implies that $n \geq \dim_{\mathbb{Q}} \mathcal{P}_t(\Omega)$ by the assumption $\rank {[N_A \; N_B]}=\dim_{\mathbb{Q}} \mathcal{P}_t(\Omega)$ and \cref{SAB}.
\end{proof}

\begin{definition} [Tight solution of even degree] \label{def:minimal}
With the notation $N_A,N_B,\mathcal{P}_t(\Omega)$ of \cref{thm:NewBound}, a $\PTE_r$ solution of degree $2t$ and size $n$ is {\it tight} if $n = \rank {[N_A \; N_B]} = \dim_{\mathbb{Q}} \mathcal{P}_t(\Omega)$. 
\end{definition}

\begin{remark}
\label{rem:tight}
\cref{SAB} implies that any tight solution is
proper.
% nontrivial.
\end{remark}

%%%%%%%%%%%%%%%%%%%%%%%%%%%%%%%%%%%%%%%
%%%%%%%%%%%%%%%%%%%%%%%%%%%%%%%%%%%%%%%
\subsection{Tight solutions associated with distinctive $t$-designs and OA}\label{subsect:Hadamard}

In this subsection, we mainly take $\Omega \subset \mathbb{Q}^r$ to be the $r$-dimensional binary sphere $S_k^{r-1}$, as introduced in \cref{rem:binary0}.
The only intent of this subsection is to have a look at some examples of tight solutions derived from distinctive $t$-designs or OA related to the LAT construction (\cref{thm:LAT0}).

We begin with a simple method for generating a number of tight solutions of degree $2$.

\begin{proposition} \label{prop:Hadamard}
Let $k \ge 2$ be an integer.
Suppose that there exists a disjoint pair of Hadamard $2$-$(4k-1,2k-1,k-1)$ designs.
Then there exists a tight solution of degree $2$ for the binary sphere $S_{2k-1}^{4k-2}$.
\end{proposition}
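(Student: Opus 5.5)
Let $r=4k-1$ and let $(R,\mathcal{A})$, $(R,\mathcal{B})$ be the given disjoint pair of Hadamard $2$-$(4k-1,2k-1,k-1)$ designs. Let $A,B\subset S^{r-1}_{2k-1}$ be the multisets of characteristic $(0,1)$-vectors of the blocks. Each Hadamard $2$-design is symmetric, so it has $b=r=4k-1$ blocks; hence $|A|=|B|=n=4k-1=\binom{r}{1}$. Disjointness of the block sets means $A\cap B=\emptyset$. The plan is to show, in order, that $(A,B)$ is a $\PTE_r$ solution of degree $2$, that it is proper, and that the rank hypothesis of \cref{cor:NewBound2} holds with $t=1$. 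Tightness then follows from \cref{def:minimal}, since equality $n=\binom{r}{1}=\dim_{\mathbb{Q}}\mathcal{P}_1(\Omega)$ holds.

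\emph{Equal power sums.} On $\{0,1\}$ we have $x^2=x$, so every monomial of degree $\le 2$ evaluated on a $(0,1)$-vector reduces to $x_i$ or $x_ix_j$ with $i\neq j$. The sum of $x_i$ over $A$ is the replication number $\rho=(2k-1)$, the same for both designs. The sum of $x_ix_j$ over $A$ is $\lambda=k-1$, again the same for both. Hence $[A]=^{n}_2[B]$ follows directly from the $2$-design property, just as in the $1$- and $2$-wise balancedness used for OAs.

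\emph{Properness and rank condition.} Take $\Omega=S^{r-1}_{2k-1}$ and $t=1$. We have $t\le 2k-1\le r-1=4k-2$ since $k\ge 2$. As noted in the proof of \cref{cor:NewBound2}, $\mathcal{P}_1(\Omega)$ has basis $x_1,\ldots,x_r$, because the constant $1$ equals $\frac{1}{2k-1}\sum x_i$ on $\Omega$. So $\dim\mathcal{P}_1(\Omega)=r$, and $N_A,N_B$ are just the $r\times n$ point-block incidence matrices, i.e.\ $A^{\top},B^{\top}$. Let $N$ be the incidence matrix of a $2$-$(v,\kappa,\lambda)$ design with $v>\kappa$. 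Then $NN^{\top}=(\rho-\lambda)I+\lambda J$, whose determinant $(\rho-\lambda)^{v-1}(\rho+(v-1)\lambda)$ is nonzero because $\rho-\lambda=k>0$. Hence $\rank N_A=\rank N_B=r$. This gives $\rank A=\rank B=r$, so the solution is proper, and also $\rank[N_A\;N_B]=r=\dim\mathcal{P}_1(\Omega)$. \Cref{cor:NewBound2} then gives $n\ge r$, and our $n=r$ attains it.

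The main step needing care is the identification of $N_A$ with the incidence matrix together with this rank computation (Fisher-type argument). Everything else is bookkeeping from the design axioms.
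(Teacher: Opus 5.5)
Your proof is correct and follows essentially the paper's route. The paper cites \cref{TdesigntoPTE}, whose proof is exactly your balancedness computation for the power sums together with the eigenvalue argument for $NN^{\top}=(\rho-\lambda)I+\lambda J$, and then appeals to \cref{cor:NewBound2} with $b=4k-1=\dim_{\mathbb{Q}}\mathcal{P}_1(S_{2k-1}^{4k-2})$; you inline these steps and also make explicit the check $\rank[N_A\;N_B]=4k-1$ (via $N_A=A^{\top}$ being nonsingular), which the paper's two-line proof leaves implicit.
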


\begin{proof}
By \cref{TdesigntoPTE}, we obtain a $\PTE_{4k-1}$ solution of degree $2$ and size $4k-1$.
It is obvious that our Hadamard design has $4k-1$ ($= \dim \mathcal{P}_1(S_{2k-1}^{4k-2})$) blocks, which is tight according to \cref{cor:NewBound2}.
\end{proof}

\begin{example} [{Doubling the Fano plane, \cref{ex:2731} continued}] 
\label{ex:2731-rev}
The sets $A$ and $B$ given in \cref{ex:2731} yield a $\PTE_7$ solution of degree $2$ and size $7$.
By \cref{prop:Hadamard}, this example is tight for the binary sphere $S_3^6$.
\end{example}

As a generalization of \cref{ex:2731-rev}, we present an infinite family of tight solutions of degree $2$.

\begin{theorem} \label{thm:tight2}
Let $p$ be a prime number such that $p \equiv 3 \pmod{4}$ and $p \ge 7$.
Then there exists a tight solution of degree $2$ for the binary sphere $S_{(p-1)/2}^{p-1}$.
\end{theorem}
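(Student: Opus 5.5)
We reduce the statement to \cref{prop:Hadamard} with $4k-1=p$, i.e.\ $k=(p+1)/4$; then $2k-1=(p-1)/2$, and $p\ge 7$ gives $k\ge 2$. The only task is to exhibit a \emph{disjoint pair} of Hadamard $2$-$(p,(p-1)/2,(p-3)/4)$ designs on the point set $\mathbb{F}_p$.

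\textbf{First design.} By \cref{Paley}(ii), $M:=(Q_p-I_p+J_p)/2$ is the incidence matrix of such a design. Entry $(i,j)$ of $M$ equals $1$ exactly when $i-j$ is a nonzero square, i.e.\ $i-j\in \square_p$, the set of nonzero quadratic residues. Hence the blocks (rows) are the translates $\mathcal{A}=\{\,i-\square_p \mid i\in\mathbb{F}_p\,\}$. Since $p\equiv 3\pmod 4$, $-1$ is a nonsquare, so $-\square_p=N_p$, the set of nonresidues. Thus $\mathcal{A}=\{\,i+N_p\,\}$, the Paley difference set development of $N_p$.

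\textbf{Second design.} Take $\mathcal{B}=\{\,i+\square_p \mid i\in\mathbb{F}_p\,\}$. This is the image of $\mathcal{A}$ under the bijection $x\mapsto -x$ of $\mathbb{F}_p$, which sends $i+N_p$ to $-i+\square_p$. A point bijection preserves the $2$-design property, so $\mathcal{B}$ is again a $2$-$(p,(p-1)/2,(p-3)/4)$ design. Equivalently, $\mathcal{B}$ has incidence matrix $M^{\top}$, and $M^{\top}$ is still circulant by \cref{Paley}(i).

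\textbf{Disjointness.} This is the one point needing care. Suppose $i+N_p=j+\square_p$ as sets. Then $N_p=(j-i)+\square_p$. With $d=j-i$: if $d=0$ this is impossible, since $N_p\ne\square_p$. If $d\ne 0$, the two sets have the same size, so the translate $d+\square_p$ would avoid $0$ and avoid every square. Summing elements gives another route: $\sum N_p=\sum \square_p + d\cdot\frac{p-1}{2}$, and both sums vanish for $p>3$, so $d\frac{p-1}{2}\equiv 0$, forcing $d=0$, a contradiction. More robustly, $d+\square_p=N_p$ means $\square_p$ is invariant under translation by $2d$ modulo the swap, and applying it twice gives $\square_p+2d=\square_p$; summing elements then forces $2d\cdot\frac{p-1}{2}\equiv 0$, so $d=0$. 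Hence no block of $\mathcal{A}$ equals a block of $\mathcal{B}$, and the pair is disjoint in the sense of \cref{def:disjointGDD}.

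\textbf{Conclusion.} With a disjoint pair of Hadamard $2$-designs in hand, \cref{prop:Hadamard}, which relies on \cref{TdesigntoPTE} and \cref{cor:NewBound2}, gives a tight solution of degree $2$ for $S_{(p-1)/2}^{p-1}$. For $p=7$ this recovers the Fano example, \cref{ex:2731-rev}.
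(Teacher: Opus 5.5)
Your reduction and your two designs are exactly the paper's: the translates of the nonzero squares $\square_p$ and of the nonsquares $N_p=-\square_p$, fed into \cref{prop:Hadamard} with $k=(p+1)/4$. The only genuinely different step is disjointness.

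The paper computes the cross-intersection numbers $\beta_a=|\square_p\cap(N_p+a)|$ using the difference-set parameter $(p-3)/4$ from \cref{Paley}(ii). It obtains $\beta_a\in\{0,(p-3)/4,(p+1)/4\}$, all smaller than the block size $(p-1)/2$.

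Your summation argument is valid and avoids the design parameter altogether. It only needs $\sum_{x\in\square_p}x=\frac12\sum_{x\in\mathbb{F}_p}x^2=0$ for $p>3$, hence also $\sum_{x\in N_p}x=0$. After that, $d\cdot\frac{p-1}{2}\equiv 0 \pmod p$ forces $d=0$. Your route is shorter and self-contained; the paper's computation yields more, namely the full intersection profile between blocks of the two designs, though the theorem does not need it. Both arguments use $p>3$ at exactly this point, which matches the failure at $p=3$, where $N_3=1+\square_3$.

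You should, however, delete the ``more robustly'' sentence, which is the weak link rather than the robust one. Negating $N_p=d+\square_p$ just returns the same identity. It says $\square_p$ is invariant under the involution $x\mapsto -x-d$, whose square is the identity, not a translation by $2d$. The swap $d+N_p=\square_p$ that the sentence seems to rely on is also incompatible with the hypothesis: taking complements in $N_p=d+\square_p$ gives $d+N_p=(\square_p\setminus\{d\})\cup\{0\}\ni 0$. So $\square_p+2d=\square_p$ is never actually derived there. Your summation argument alone completes the disjointness step.
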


\begin{proof}
Let $p \geq 7$ be a prime number congruent to $3$ modulo $4$.
Let $Q_p$ be the matrix defined in (\ref{Pp+1}).
Then by \cref{Paley} (i), $(Q_{p}-I_{p}+J_{p})/2$ is a circulant matrix.
The first column of $(Q_p - I_p + J_p)/2$ is the {\it characteristic vector} (see (\ref{eq:char})) of the set $\mathbb{F}_p^{\times 2}$ of nonzero quadratic residues in $\mathbb{F}_p$.
For each $a \in \mathbb{F}_p$, let
\[
\beta_a := |\mathbb{F}_p^{\times 2} \cap (-\mathbb{F}_p^{\times 2}+a)|.
\]
Since $p \equiv 3 \pmod{4}$, we have
\[
\mathbb{F}_p^{\times 2} \cup (-\mathbb{F}_p^{\times 2}) \cup \{0\} = \mathbb{F}_p.
\]
Hence it follows that
\begin{align*}
\beta_a
&= |\mathbb{F}_p^{\times 2}  \cap (\mathbb{F}_p + a)| - |\mathbb{F}_p^{\times 2}  \cap \{a\}| - |\mathbb{F}_p^{\times 2}  \cap (\mathbb{F}_p^{\times 2} +a)| \\
&= 
\begin{cases}
 0 & (a=0), \\
\frac{p-1}{2} - 1 - \frac{p-3}{4} = \frac{p-3}{4} & (a \in \mathbb{F}_p^{\times 2}), \\
\frac{p-1}{2} - \frac{p-3}{4} = \frac{p+1}{4} & (a \in -\mathbb{F}_p^{\times 2}),
\end{cases}
 \end{align*}
where \cref{Paley} (ii) is used in the computation of $|\mathbb{F}_p^{\times 2} \cap (\mathbb{F}_p^{\times 2}+a)|$.
We thus conclude that
$\beta_a \le (p+1)/4< (p-1)/2$ if $p > 3$.
This implies that two $2$-$(p,(p-1)/2,(p-3)/4)$ designs $(\mathbb{F}_p, \mathcal{B}_i)$, where
\[
 \mathcal{B}_1 = \{\mathbb{F}_p^{\times 2}+a \mid a \in \mathbb{F}_p \}, \quad \mathcal{B}_2 = \{-\mathbb{F}_p^{\times 2}+a \mid a \in \mathbb{F}_p \},
\]
are disjoint. 
The result then follows from \cref{prop:Hadamard}. 
\end{proof}

\begin{remark} \label{rem:HadamardDesign}
Although we have only dealt with the Paley matrix approach, there are many other known methods to construct Hadamard designs (cf.~\cite[\S~18]{LintWilson2001}), and accordingly many ways to construct tight solutions of degree $2$.
\end{remark}

The following is a spectacular tight solution of degree $4$ for the binary sphere $S_7^{22}$.

\begin{example}
[Tight solution derived from Witt system]
\label{ex:tight4}
Let $\mathcal{F}$ be a collection of $7$-subsets of $\mathbb{F}_{23}$ given by
\begin{align*}
\mathcal{F} :=
\{ &\{0, 1, 2, 3, 5, 14, 17\}, \;
\{0, 1, 2, 6, 7, 19, 21\}, \;
\{0, 1, 2, 8, 11, 12, 18\}, \; \{0, 1, 2, 9, 10, 15, 20\},\\
& \{0, 1, 3, 4, 11, 19, 20\}, \;
\{0, 1, 3, 6, 8, 10, 13\}, \; 
\{0, 1, 3, 7, 9, 16, 18\}, \;
\{0, 1, 4, 6, 9, 12, 17\}, \\
&\{0, 1, 4, 10, 14, 18, 21\}, \;
\{0, 1, 5, 9, 11, 13, 21\}, \;
\{0, 1, 5, 10, 12, 16, 19\}\}.
\end{align*}
Then the set $A$ of the characteristic vectors of the $7$-subsets of type $F + a$, where $a \in \mathbb{F}_{23}$ and $F \in \mathcal{F}$, yields a $4$-$(23,7,1)$ design. This is widely known as {\it Witt system}~(\cite{Witt1937_Mathieu,Witt1937_Steiner}) and the only known $4$-design with $r$ points and $\binom{r}{2}$ blocks; see~\cite[Table 2]{Iwasaki1988}.
Let $B$ be the set of the reverse sequences (vectors) of $A$. 
We can easily check that $A$ and $B$ are disjoint.
Hence by \cref{TdesigntoPTE}, we obtain $[A] =^{253}_4 [B]$.
This is a tight solution of degree $4$ for the binary sphere $S_7^{22}$, since the second incidence matrix $N_{2}$ of $4$-$(23,7,1)$ is a submatrix of $N_A$ (resp. $N_B$) with
$\binom{23}{0}+\binom{23}{1}+\binom{23}{2}$ rows and $\binom{23}{2}$ columns and hence
\[
\rank N_A = \rank N_B  =\rank N_2= \dim_{\mathbb{Q}} \mathcal{P}_2(S_7^{22}) = \binom{23}{2}=253;
\]
see the proof of \cref{cor:NewBound2} for details.
\end{example}

The following degree-four solution is of great importance to the present paper, which is tight for the binary hypercube $C^5 = \{0,1\}^5$.

\begin{example}
[Tight solution derived from LAT construction]
\label{ex:tight4-2}
Let $A$ (resp.\ $B$) be the set of all $5$-dimensional $(0,1)$-vectors with an even (resp.\ odd) number of $1$'s. As seen in \cref{2col}, both $A$ and $B$ are $\OA(16, 5, 2, 4)_1$. These OA were first found by Rao~\cite[p.\ 130]{Rao1947} in the context of design of experiments, which are tight for the binary hypercube $C^5$, since
\[
16 = \binom{5}{0}+\binom{5}{1}+\binom{5}{2} = \dim_{\mathbb{Q}} \mathcal{P}_2 (C^5).
\]
\end{example}

\begin{remark}
\label{rem:nonbinary0}
Let
\[
A= \{(4,0),(1,1),(3,2),(5,2),(0,3),(2,4)\}, \quad B= \{(3,0),(5,1),(0,2),(2,2),(4,3),(1,4)\}.
\]
Then $(A,B)$ is a $\PTE_2$ solution of degree $4$ and size $6$, which is tight, for example when $\Omega$ is taken to be the senary square $\{0,1,2,3,4,5\}^2$.
Further discussion on such nonbinary tight examples are beyond the scope of this paper, and will be extensively investigated in a forthcoming paper.
\end{remark}

%%%%%%%%%%%%%%%%%%%%%%%%%%%%%%%%%%%
%%%%%%%%%%%%%%%%%%%%%%%%%%%%%%%%%%%
\section{Direct constructions of $\PTE_r$ solutions and disjoint combinatorial designs} \label{sect:design}

In this section we present criteria to `directly' construct $\PTE_r$ solutions by taking a disjoint pair of combinatorial designs; particularly in the case of OAs, our criterion substantially generalizes the LAT construction (\cref{thm:LAT0}).
The results of this section give a number-theoretic interpretation of the {\it regularity condition} of combinatorial designs.

%%%%%%%%%%%%%%%%%%%%%%%%%%%%%%%%%%
%%%%%%%%%%%%%%%%%%%%%%%%%%%%%%%%%%
\subsection{OA-based direct construction} \label{subsect:CA}

The following result, which is a generalization of the LAT construction, is
a main theorem of this subsection:

\begin{theorem} [{OA-based construction}] \label{OAtoPTE}
Let $s, t \geq 2$.
Let $X,Y$ be the sets of rows of two disjoint $\OA(\ell,r,s,t)_{\lambda}$.
Then $(X,Y)$ is a proper solution of the $\PTE_r$ of degree $t$ and size $\ell$.
\end{theorem}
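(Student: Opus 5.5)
We need two facts: the moment equations up to degree $t$, and properness, meaning $\rank X=\rank Y=r$. We also need $X$ and $Y$ to be disjoint multisets, but that is given.

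\textbf{Equal moments.} Take a monomial $\prod_{j=1}^r x_j^{k_j}$ with $1\le k_1+\cdots+k_r\le t$. Its support $J=\{j\mid k_j>0\}$ has $|J|\le t$, so we can enlarge $J$ to a set $J'$ of exactly $t$ columns (possible since $r\ge t$ is implicit in the OA definition). By the definition of $\OA(\ell,r,s,t)_\lambda$, the $\ell\times t$ subarray on $J'$ consists of $\lambda$ copies of every ordered $t$-tuple over the symbol set $S$ ($|S|=s$, $S\subset\mathbb{Q}$). Therefore
\[
\sum_{\mathbf{x}\in X}\prod_{j\in J}x_j^{k_j}=\lambda\, s^{t-|J|}\prod_{j\in J}\Bigl(\sum_{\sigma\in S}\sigma^{k_j}\Bigr).
\]
This quantity depends only on the parameters and the symbol set, not on the particular array. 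The same value is obtained for $Y$, which gives $[X]=^{\ell}_t[Y]$.

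\textbf{Properness.} By \cref{SAB} (applicable since $t\ge2$), it suffices to show $\rank X=r$, i.e.\ that the columns of $X$ are linearly independent. Suppose $\sum_j c_j\mathbf{x}^{(j)}=0$ for the columns $\mathbf{x}^{(j)}$. Because strength $t\ge2$ implies strength $1$ and $2$, we can compute the Gram data:
\[
\sum_{\mathbf{x}\in X} x_jx_{j'}=\lambda s^{t-2}\mu_1^2 \quad (j\ne j'),\qquad \sum_{\mathbf{x}\in X} x_j^2=\lambda s^{t-1}\mu_2,
\]
where $\mu_k=\sum_{\sigma\in S}\sigma^k$. Hence the Gram matrix is
\[
G=\lambda s^{t-2}\bigl[(s\mu_2-\mu_1^2)I_r+\mu_1^2J_r\bigr].
\]
By Cauchy--Schwarz, $s\mu_2-\mu_1^2>0$ since the $s\ge2$ symbols are distinct. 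So $G$ is a positive definite matrix plus a positive semidefinite one, hence nonsingular. Thus $X^{\top}X=G$ has rank $r$, and $\rank X=r$; likewise $\rank Y=r$. Alternatively, one could avoid \cref{SAB} and run this argument directly on $Y$.

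\textbf{Main obstacle.} The only delicate point is justifying the Gram computation: one must use strength $2$ for the off-diagonal entries and note that strength $t$ implies every lower strength. The step of padding $J$ to $J'$ also quietly uses $r\ge t$. Everything else is a direct count.
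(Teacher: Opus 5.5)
Your proposal is correct and follows essentially the same route as the paper: equal moments from the OA balance condition (you inline the regularity lemma \cref{OAreg} by padding the support to $t$ columns), and full rank from $X^{\top}X=\lambda s^{t-2}\bigl[(s\mu_2-\mu_1^2)I_r+\mu_1^2J_r\bigr]$ combined with Cauchy--Schwarz. The only differences are cosmetic: you get nonsingularity from positive definiteness rather than from the explicit eigenvalues in \cref{fullrank}, and you transfer the rank statement to $Y$ via \cref{SAB}, which is an optional shortcut.
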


To prove our main theorem, we use two auxiliary lemmas.

\begin{lemma} \label{fullrank}
Let $M \in M_{n,r}(\mathbb{Q})$. Suppose that there exist $a$, $b \in \mathbb{Q}$ such that $M^{\top}M = a I_r + bJ_r$.
Moreover, suppose that $a \neq 0$ and $a+rb \neq 0$. Then it holds that $\rank M=r$.
\end{lemma}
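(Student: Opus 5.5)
The plan is to compute the eigenvalues of the symmetric matrix $M^{\top}M = aI_r + bJ_r$ explicitly and show they are all nonzero. This makes $M^{\top}M$ invertible, and then we compare ranks via $\rank(M^{\top}M) \le \rank M \le r$.

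First, the eigenvalues of $J_r$. Since $J_r = \mathbf{1}\mathbf{1}^{\top}$ with $\mathbf{1}$ the all-one column vector, we have $J_r\mathbf{1} = r\mathbf{1}$. Also $J_r\mathbf{v} = \mathbf{0}$ for every $\mathbf{v}$ in the $(r-1)$-dimensional subspace $\mathbf{1}^{\perp}$. Together these give a basis of $\mathbb{Q}^r$ consisting of eigenvectors of $aI_r + bJ_r$:
\begin{itemize}
\item $\mathbf{1}$, with eigenvalue $a + rb$;
\item any basis of $\mathbf{1}^{\perp}$, with eigenvalue $a$.
\end{itemize}
Hence
\[
\det(aI_r + bJ_r) = a^{r-1}(a + rb).
\]
Alternatively, one can verify directly that
\[
(aI_r + bJ_r)^{-1} = \frac{1}{a}\left(I_r - \frac{b}{a + rb}J_r\right),
\]
using $J_r^2 = rJ_r$.

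Under the hypotheses $a \neq 0$ and $a + rb \neq 0$, the determinant is nonzero, so $M^{\top}M$ is invertible and $\rank(M^{\top}M) = r$. Since the rank of a product is at most the rank of each factor,
\[
r = \rank(M^{\top}M) \le \rank M \le \min(n, r) \le r.
\]
This forces $\rank M = r$.

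There is no real obstacle here. The only step needing care is the eigenvalue (or determinant) computation for $aI_r + bJ_r$, and it is routine. Note that the argument works over $\mathbb{Q}$ and uses no positivity; it implicitly gives $n \ge r$.
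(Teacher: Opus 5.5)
Your proof is correct and matches the paper's argument: the paper likewise observes that $M^{\top}M = aI_r + bJ_r$ has eigenvalues $a+rb$ and $a$ with multiplicities $1$ and $r-1$, so $M^{\top}M$ is nonsingular and $\rank M = r$. You simply write out the eigenvector computation and the inequality $\rank(M^{\top}M) \le \rank M \le r$, which the paper leaves to ``standard arguments from linear algebra.''
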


\begin{proof}
By standard arguments from linear algebra, we see that the eigenvalues of $M^{\top}M$ are $a+rb$ and $a$ with multiplicity $1$ and $r-1$, respectively, which implies the desired result.
\end{proof}

\begin{lemma} [{Regularity of OA}] \label{OAreg}
Let $t$, $t'$ be positive integers such that $t' \leq t$.
Then an $\OA(\ell,r,s,t)_{\lambda}$ is an $\OA(\ell,r,s,t')_{\lambda_{t'}}$ for some $\lambda_{t'}$.
\end{lemma}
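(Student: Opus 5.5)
The proof is a direct counting argument on the projection of the array onto a set of $t'$ columns. Fix any $\ell \times t'$ subarray, given by a choice of $t'$ columns $c_1<\cdots<c_{t'}$. Since $t' \le t \le r$, we can extend these to $t$ columns $c_1,\ldots,c_{t'},c_{t'+1},\ldots,c_t$. By the definition of $\OA(\ell,r,s,t)_\lambda$, the rows of the $\ell \times t$ subarray on these columns consist of exactly $\lambda$ copies of each of the $s^t$ ordered $t$-tuples over the $s$ symbols.

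Now fix a $t'$-tuple $(x_1,\ldots,x_{t'})$. We count the rows whose restriction to the columns $c_1,\ldots,c_{t'}$ equals $(x_1,\ldots,x_{t'})$. Such a row restricts to a $t$-tuple in which the first $t'$ coordinates are prescribed and the remaining $t-t'$ coordinates are arbitrary. There are $s^{t-t'}$ such $t$-tuples, and each occurs exactly $\lambda$ times. Hence the count is $\lambda_{t'} := \lambda s^{t-t'}$.

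This number depends neither on the tuple nor on the choice of the $t'$ columns, nor on the chosen extension. So every $\ell \times t'$ subarray consists of $\lambda s^{t-t'}$ copies of each of the $s^{t'}$ ordered $t'$-tuples, which is exactly the definition of $\OA(\ell,r,s,t')_{\lambda s^{t-t'}}$. As a consistency check, $\ell = \lambda s^t = \lambda_{t'} s^{t'}$.

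There is no real obstacle here. The only point needing care is the implicit assumption $t \le r$ in the definition of an OA, which makes the extension to $t$ columns possible.
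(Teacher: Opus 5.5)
Your argument is correct. The paper does not prove this lemma itself: it only points to an exercise in Hedayat--Sloane--Stufken. Your marginalization count is the standard self-contained solution to that exercise. You extend the $t'$ columns to $t$ columns, partition the rows according to which of the $s^{t-t'}$ extensions of a fixed $t'$-tuple they realize, and sum $\lambda$ over those extensions.

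What your version adds to the bare citation is the explicit index $\lambda_{t'}=\lambda s^{t-t'}=\ell/s^{t'}$. The paper uses these values without comment later. In the proof of \cref{OAtoPTE}, the off-diagonal factor $\frac{\ell}{s^2}$ and the diagonal factor $\frac{\ell}{s}$ in the computation of $X^{\top}X$ are exactly $\lambda_2$ and $\lambda_1$.

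Your caveat about $t\le r$ is also well taken. \cref{def:OA} does not state it. If $t>r$ there are no $\ell\times t$ subarrays, so every array vacuously has strength $t$, and the lemma would fail for $t'\le r$. Hedayat--Sloane--Stufken build $t\le r$ into their definition, and the paper only ever applies the lemma in that range.
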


\begin{proof}
See \cite[Problem 2, p.\ 4]{HSS1999}.
\end{proof}

We are now ready to prove \cref{OAtoPTE}.

\begin{proof}[{Proof of \cref{OAtoPTE}}]
Let $X$ and $Y$ be our disjoint $\OA(\ell,r,s,t)_{\lambda}$ with $s, t \geq 2$. 
By \cref{OAreg}, it holds that
\[
\sum_{\mathbf{x} \in X} \prod_{j=1}^r x_j^{k_j} = \sum_{\mathbf{y} \in Y}  \prod_{j=1}^r y_j^{k_j} \quad (1 \le k_1+\cdots+k_r \le t).
\]
Since $X \cap Y=\emptyset$, we obtain $[X]=^{\ell}_t [Y]$.

Next we prove the full rankness of our OA.
Let $\gamma_1, \ldots, \gamma_s$ be the symbols of an $\OA(\ell,r,s,t)_{\lambda}$.
By \cref{OAreg}, the $(i,j)$-th component of $X^{\top} X$ is given by
\begin{align*}
\begin{cases}
\frac{\ell}{s^2}\left(\sum_{i=1}^s \gamma_i^2+ 2 \sum_{1 \leq i<j \leq s} \gamma_i \gamma_j \right)=\frac{\ell}{s^2} \left(\sum_{i=1}^s \gamma_i \right)^2 & (i \neq j), \\
\frac{\ell}{s^2} s\sum_{i=1}^s \gamma_i^2=\frac{\ell}{s} \sum_{i=1}^s \gamma_i^2  & (i=j).
\end{cases}
\end{align*}
Therefore, we have
\begin{align*}
X^{\top}X
&= \frac{\ell}{s^2} \left(\left(s\sum_{i=1}^s \gamma_i^2-\left(\sum_{i=1}^s \gamma_i \right)^2 \right) I_r + \left(\sum_{i=1}^s \gamma_i  \right)^2 J_r \right).
\end{align*}
By the Cauchy--Schwarz inequality, we have
\[
s\sum_{i=1}^s \gamma_i^2 -\left(\sum_{i=1}^s \gamma_i \right)^2 \geq 0.
\]
Therefore, 
\[
\left(s\sum_{i=1}^s \gamma_i^2-\left(\sum_{i=1}^s \gamma_i \right)^2+r \left(\sum_{i=1}^s \gamma_i  \right)^2 \right)
\left(s\sum_{i=1}^s \gamma_i^2-\left(\sum_{i=1}^s \gamma_i \right)^2 \right)=0
\]
if and only if
$\gamma_1 = \cdots = \gamma_s$.
Since our OA has level $s \geq 2$, we obtain $\rank X=r$ by \cref{fullrank}.
\end{proof}

The argument used in the second paragraph of the above proof will appear several times in subsequent sections; for example, see \cref{GDDtoPTE,thm:OAL1}.

\begin{remark} \label{rem:OA}
\begin{enumerate}
\item[(i)] Halving the trivial $\OA(2^r,r,2,r)_1$, which is a special case of \cref{OAtoPTE}, provides a design-theoretic interpretation of the LAT construction (\cref{thm:LAT0}).
\item[(ii)] It turns out that a Type-I OA analogue of \cref{OAtoPTE} is also available. 
For example, both $A = \{(0,1), (1,2), (2,0)\}$ and $B = \{(1,0), (2,1), (0,2)\}$ are the rows of a Type-I OA of strength $1$, giving a $\PTE_2$ solution of degree $1$ (and also of degree $2$).
\end{enumerate}
\end{remark}

%%%%%%%%%%%%%%%%%%%%%%%%%%%%%%%%%%%%%%%
%%%%%%%%%%%%%%%%%%%%%%%%%%%%%%%%%%%%%%%
\subsection{Block-design-based direct construction} \label{subsect:BD}

The following is the main theorem of this subsection, where we use the notation $D^{\{k\}}$ as defined just before \cref{cor:NewBound2}:

\begin{theorem} [GDD-based construction] \label{GDDtoPTE}
Let $(R,\mathcal{B}_1,\mathcal{G})$ and $(R,\mathcal{B}_2,\mathcal{G})$ be a disjoint pair of $\GDD_{\lambda}(t,k,r)$ of type $v^g$ with $b$ blocks, where $k < g$.
Let $X_1$ and $X_2$ be the sets of the characteristic vectors of $\mathcal{B}_1$ and $\mathcal{B}_2$, respectively.
Then $(X_1,X_2)$ is a proper solution of the $\PTE_r$ of degree $t$ and size $b$.
\end{theorem}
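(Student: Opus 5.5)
Following the proof of \cref{OAtoPTE}, the plan has two parts: equal power sums up to degree $t$, and full rank.

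\emph{Power sums.} The vectors in $X_1,X_2$ are $(0,1)$-vectors, so $x_j^{k_j}=x_j$ whenever $k_j\ge 1$. Hence for any exponents with $1\le k_1+\cdots+k_r\le t$, the monomial $\prod_j x_j^{k_j}$ evaluated at the characteristic vector of a block $B$ equals $1$ if $T\subseteq B$ and $0$ otherwise, where $T=\{j: k_j\ge1\}$ has $1\le|T|\le t$. So it suffices to show that for every $T\subseteq R$ with $|T|\le t$, the number of blocks of $\mathcal{B}_1$ containing $T$ equals the number for $\mathcal{B}_2$. I split into two cases.

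If $T$ meets some group in at least two points, condition (ii) of \cref{def:GDD} forces the count to be $0$ for both designs.

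If $T$ is a partial transversal ($|T\cap G_i|\le1$ for all $i$) with $|T|=s\le t$, I need a GDD analogue of \cref{OAreg}: the number $\lambda_s$ of blocks containing $T$ depends only on $s$. I would prove this by double counting. Count pairs $(T',B)$ with $T\subseteq T'\subseteq B$, $|T'|=t$, and $T'$ a partial transversal. Since every block is itself a partial transversal, each $B\supseteq T$ contributes $\binom{k-s}{t-s}$ such $T'$. On the other side, the number of $T'$ extending $T$ is $\binom{g-s}{t-s}v^{t-s}$ (choose $t-s$ of the remaining groups, then one point in each), and each such $T'$ lies in exactly $\lambda$ blocks. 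Therefore
\[
\lambda_s=\lambda\binom{g-s}{t-s}v^{t-s}\Big/\binom{k-s}{t-s},
\]
which depends only on $s$ and the common parameters, so it is the same for both designs. Together with disjointness $\mathcal{B}_1\cap\mathcal{B}_2=\emptyset$, this gives $[X_1]=^b_t[X_2]$. The size is $b$ because both designs have $b$ blocks, and the value $s=0$ of the formula also confirms that the block counts agree.

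\emph{Properness.} This is the step I expect to be the main obstacle. Here $X_1^\top X_1$ is not of the form $aI_r+bJ_r$, because the $(i,j)$ entry is $\lambda_2$ when $i,j$ lie in different groups and $0$ when they lie in the same group, while the diagonal entries are $\lambda_1$. Writing $P$ for the block-diagonal matrix $I_g\otimes J_v$ (the group indicator), we get
\[
X_1^\top X_1=(\lambda_1)I_r-\lambda_2P+\lambda_2J_r.
\]
Since $I_r$, $P$ and $J_r$ commute, I would compute the eigenvalues directly instead of citing \cref{fullrank}:
\begin{itemize}
\item $\lambda_1$ on vectors with zero sum within each group (multiplicity $g(v-1)$);
\item $\lambda_1-v\lambda_2$ on vectors constant on groups with zero total sum;
\item $\lambda_1-v\lambda_2+r\lambda_2$ on the all-ones vector.
\end{itemize}
The first and third are positive (note $\lambda_1>0$ because $\lambda\ge1$). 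For the second, the formula gives $\lambda_1/\lambda_2=(g-1)v/(k-1)$, so $\lambda_1-v\lambda_2=\lambda_2 v\bigl((g-1)/(k-1)-1\bigr)>0$ exactly because $k<g$; this is where that hypothesis enters. When $t=1$ or $k=1$, $\lambda_2$ may not be defined via $\lambda$, so I would instead count pairs directly: $\lambda_2(g-1)v=\lambda_1(k-1)$ holds in general by double counting pairs inside blocks. Alternatively, by \cref{SAB} it suffices to get $\rank X_1=r$, since then $\rank X_2=r$ as well. Hence $\rank X_1=\rank X_2=r$ and the solution is proper.
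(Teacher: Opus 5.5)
For $t\ge 2$ your proof is correct and essentially the paper's. You use the same double count for $\lambda_s$, and you make explicit that every $t$-subset of a block is automatically a partial transversal, which the paper leaves implicit. Your Gram matrix $\lambda_1 I_r-\lambda_2(I_g\otimes J_v)+\lambda_2 J_r$ equals the paper's $I_g\otimes(\lambda_1 I_v)+(J_g-I_g)\otimes(\lambda_2 J_v)$. The eigenvalue analysis, using $\lambda_1/\lambda_2=v(g-1)/(k-1)\neq v$, is the same.

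The sentence meant to cover $t=1$ (which includes $k=1$, since $t\le k$) is wrong. Counting pairs inside blocks through a fixed point $x$ only gives $\sum_y \lambda_{\{x,y\}}=\lambda_1(k-1)$. Turning this into $\lambda_2(g-1)v=\lambda_1(k-1)$ needs every cross-group pair to lie in the same number of blocks. Strength $t\ge 2$ provides exactly this and strength $1$ does not. Without it, $X_1^\top X_1$ is not of your three-term form at all.

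This cannot be patched, because the statement is false for $t=1$. Take $v=1$, $g=r=4$, $k=2$, and the block sets $\{\{1,2\},\{3,4\}\}$ and $\{\{1,3\},\{2,4\}\}$. These are disjoint $\GDD_1(1,2,4)$ of type $1^4$ with $k<g$, and $[(1,1,0,0),(0,0,1,1)]=^2_1[(1,0,1,0),(0,1,0,1)]$. Yet both sides have rank $2<4$; indeed, a proper solution always needs $b\ge r$.

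So delete that sentence and state $t\ge 2$ as an explicit hypothesis. The paper's proof tacitly assumes it too, since it takes $\lambda_2$ from (\ref{ls}), which is defined only for $s\le t$. Your alternative route via \cref{SAB} likewise requires degree at least $2$.
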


To prove our main theorem, we use some auxiliary lemmas.

The following seems to be standard for design theorists but not for researchers in other fields. Below, we give a proof based on the {\it double counting} for those researchers.

\begin{lemma} [{Regularity of GDD}] \label{GDDts}
Let $\lambda, t, k, g, v, r$ be positive integers as above.
Let $s$ be a positive integer such that $s \leq t$.
Let $(R,\mathcal{B},\mathcal{G})$ be a $\GDD_{\lambda}(t,k,r)$ of type $v^g$.
Then the number of blocks containing $S \in R^{\{s\}}$ is given by
\begin{align} \label{ls}
\begin{cases} 
\frac{\lambda \binom{g-s}{t-s} v^{t-s}}{\binom{k-s}{t-s}} (=:\lambda_s) &
(|G \cap S| \le 1 \; \text{for every $G \in \mathcal{G}$}),\\
0 & (\text{otherwise}). 
\end{cases}
\end{align}
In particular, a $\GDD_{\lambda}(t,k,r)$ of type $v^g$ is a $\GDD_{\lambda_s}(s,k,r)$ of type $v^g$.
\end{lemma}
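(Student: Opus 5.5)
The plan is a standard double counting on incidences between $t$-subsets and blocks, carried out for a fixed $s$-subset $S$.

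\emph{Case where $S$ meets some group twice.} If $|G\cap S|\ge 2$ for some $G\in\mathcal{G}$, then any block $B\supseteq S$ would satisfy $|B\cap G|\ge 2$. This contradicts condition (ii) of \cref{def:GDD}, so the count is $0$.

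\emph{Case where $S$ is transversal.} Now assume $|G\cap S|\le 1$ for every $G$; call such a set \emph{transversal}. Count the set of pairs
\[
\mathcal{N}_S:=\{(T,B) \mid S\subseteq T\subseteq B,\ T\in R^{\{t\}},\ B\in\mathcal{B}\}
\]
in two ways.

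\emph{Counting by $T$.} Every $T$ that occurs lies inside a block, so by condition (ii) it is transversal. Conversely, each transversal $T\supseteq S$ lies in exactly $\lambda$ blocks by condition (iii). The transversal $t$-sets containing $S$ are obtained by choosing $t-s$ of the $g-s$ groups not met by $S$, and then one point from each chosen group. There are $\binom{g-s}{t-s}v^{t-s}$ such sets, so
\[
|\mathcal{N}_S|=\lambda\binom{g-s}{t-s}v^{t-s}.
\]

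\emph{Counting by $B$.} Each block $B\supseteq S$ contains exactly $\binom{k-s}{t-s}$ sets $T$ with $S\subseteq T\subseteq B$. Hence $|\mathcal{N}_S|$ equals the number of blocks through $S$ times $\binom{k-s}{t-s}$. Note that $t\le k$ makes this binomial coefficient nonzero.

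Equating the two counts gives $\lambda_s$ as in (\ref{ls}). It depends only on $s$, not on $S$, which proves the final assertion that the design is a $\GDD_{\lambda_s}(s,k,r)$ of type $v^g$.

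The only delicate point is the count of transversal $t$-sets extending $S$, namely choosing the new groups and then the points within them, together with noting that non-transversal $T$ contribute nothing. This is exactly where condition (ii) is used; everything else is routine.
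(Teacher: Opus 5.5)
Your proposal is correct and uses the same double-counting argument as the paper: you count pairs $(T,B)$ with $S\subseteq T\subseteq B$ once via blocks, giving $\lambda_S\binom{k-s}{t-s}$, and once via transversal $t$-sets, giving $\lambda\binom{g-s}{t-s}v^{t-s}$. You also handle the non-transversal case explicitly through condition (ii) and note that non-transversal $T$ contribute nothing, which makes precise two points the paper's proof leaves implicit.
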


\begin{proof} 
Let $S \in R^{\{s\}}$, which is included in a block of our GDD. We count the number of elements in the set
\[
\mathcal{F}_S = \left\{(T,B) \mid T \in R^{\{t\}}, B \in \mathcal{B}, S \subset T \subset B \right\} 
\]
in two ways. 

First, let $\lambda_S$ be the number of blocks containing $S$.
For each of such blocks, say $B$, the number of subsets $T \in R^{\{t\}}$ of $B$ such that $S \subset T$ is $\binom{k-s}{t-s}$, since $B \setminus T \subset B \setminus S$.
Thus, we have
\[
|\mathcal{F}_S|
= \sum_{\substack{B \in \mathcal{B} \\S \subset B}}
\sum_{\substack{T \in R^{\{t\}} \\ S \subset T \subset B}} 1
=\sum_{\substack{B \in \mathcal{B} \\S \subset B}} \binom{k-s}{t-s}
=\lambda_S \binom{k-s}{t-s}.
\]

Second, the number of $T \in R^{\{t\}}$ such that $S \subset T$ and $|G \cap S| \le 1$ for every $G \in \mathcal{G}$ is given by $\binom{g-s}{t-s}v^{t-s}$.
For each such $T$, the number of blocks $B$ containing $S$ such that $T \subset B$ is $\lambda$. 
Thus, we have 
\[
|\mathcal{F}_S|
 = \sum_{\substack{T \in R^{\{t\}} \\ S \subset T\\ 
\forall G \in \mathcal{G},\; |G \cap T| \leq 1}} \sum_{\substack{B \in \mathcal{B} \\
T \subset B}} 1
 = \sum_{\substack{T \in R^{\{t\}} \\ S \subset T \\ 
\forall G \in \mathcal{G},\; |G \cap T| \leq 1}} \lambda
 =\lambda \binom{g-s}{t-s} v^{t-s},
\]
which is independent of the choice of $S$.
\end{proof}

Now, let $(R,\mathcal{B},\mathcal{G})$ be a $\GDD_{\lambda}(t,k,r)$ of type $v^g$, where $R : = \{1,\ldots, r\}$.
Then each block $B \in \mathcal{B}$ is identified with the {\it characteristic vector}
\begin{align} \label{eq:char}
\mathbf{x}^B:=(x_1^B,\ldots,x_r^B),&& x_i^B :=
\begin{cases}
1 & (i \in B),\\
0 & (i \not\in B).
\end{cases}
\end{align}

The following is a restatement of \cref{GDDts} in an algebraic manner:

\begin{lemma} \label{GDDLem}
Let $(R,\mathcal{B},\mathcal{G})$ be a $\GDD_{\lambda}(t,k,r)$ of type $v^g$.
With the notation $\lambda_s$ of (\ref{ls}), it holds that for any $S:=\{i_1,\ldots, i_s \} \in R^{\{s\}}$,
\begin{align*}
\sum_{B \in \mathcal{B} } \prod_{j=1}^s x_{i_j}^B  =
\begin{cases}
\lambda_s & |G \cap S| \le 1 \; \text{for every $G \in \mathcal{G}$}),\\
0 & (\mbox{otherwise}).
\end{cases}
\end{align*}
\end{lemma}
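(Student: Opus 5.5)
The statement to prove is \cref{GDDLem}: the sum $\sum_{B} \prod_{j=1}^s x_{i_j}^B$ over the blocks of a $\GDD_{\lambda}(t,k,r)$ of type $v^g$ equals $\lambda_s$ when $S$ meets every group in at most one point, and $0$ otherwise. The plan is to rewrite this sum as a block count and then read off the answer from \cref{GDDts}.

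First, by the definition (\ref{eq:char}) of the characteristic vector, $x_i^B \in \{0,1\}$ for each $i$. Hence the product $\prod_{j=1}^s x_{i_j}^B$ equals $1$ exactly when $x_{i_j}^B=1$ for every $j$, that is, when $S=\{i_1,\ldots,i_s\} \subset B$. Otherwise it equals $0$. The left-hand side is therefore the indicator sum
\[
\sum_{B \in \mathcal{B}} \mathbf{1}[S \subset B] = |\{B \in \mathcal{B} \mid S \subset B\}|,
\]
so it counts the blocks containing $S$.

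Second, split according to how $S$ meets the groups.

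\emph{Case $|G \cap S| \le 1$ for every $G$.} Since $s \le t$, \cref{GDDts} applies and gives this count directly as $\lambda_s$.

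\emph{Case $|G \cap S| \ge 2$ for some $G$.} Any block $B \supset S$ would satisfy $|B \cap G| \ge |S \cap G| \ge 2$. This contradicts condition (ii) of \cref{def:GDD}, so no block contains $S$ and the count is $0$.

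One technical point needs attention. The proof of \cref{GDDts} is phrased for an $S$ "included in a block", whereas here $S$ is an arbitrary transversal $s$-set. I will note that the double-counting argument never uses that hypothesis. It simply gives $\lambda_S\binom{k-s}{t-s} = \lambda\binom{g-s}{t-s}v^{t-s}$, where $\lambda_S$ is the number of blocks containing $S$; here $\binom{k-s}{t-s}\neq 0$ because $s \le t \le k$. This identity holds for any $S$ meeting each group at most once, so $\lambda_S=\lambda_s$ in that case as well.

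Apart from this clarification there is no real obstacle. The lemma is a direct translation of \cref{GDDts} into polynomial language via the $0/1$ characteristic vectors.
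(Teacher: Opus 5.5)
Your proposal is correct and follows the paper's proof: you identify the sum with the number of blocks containing $S$ via the $0/1$ characteristic vectors, then read off the value from \cref{GDDts}. Your extra remarks are also valid. Handling the ``otherwise'' case directly from condition (ii) of \cref{def:GDD}, and noting that the double count in \cref{GDDts} never uses that $S$ lies in a block, simply make explicit what the paper leaves implicit.
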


\begin{proof}
We note that
\[
\sum_{B \in \mathcal{B}} \prod_{j=1}^s x_{i_j}^B  = |\{ B \in \mathcal{B} \mid S \subset B\}|.
\]
By \cref{GDDts}, the right hand side is $\lambda_s$ if $|G \cap S| \le 1$ for every $G \in \mathcal{G}$, and $0$ otherwise.
\end{proof}

Now we prove \cref{GDDtoPTE}.

\begin{proof}[{Proof of \cref{GDDtoPTE}}]
Let
$f(\mathbf{z}):=z_1^{k_1} \cdots z_r^{k_r}$
with $1 \leq k_1+\cdots+k_r \leq t$.
Let $S := \{i_1, \ldots, i_s \}$, and suppose that
 \begin{align*}
 k_i & \begin{cases}
 \geq 1 & (i \in S),\\
 =0 & (\mbox{otherwise}).
 \end{cases}
 \end{align*}
Then by \cref{GDDLem}, we obtain
\[
\sum_{B \in \mathcal{B}_1} f(\mathbf{x}^B) =
\sum_{B \in \mathcal{B}_2} f(\mathbf{x}^B).
\]

For the proof of the properness, let $R = \{1,\ldots, r\}$ and $\mathcal{B} = \{B_1,\ldots, B_b\}$ be the points set and block set of a $\GDD_{\lambda}(t,k,r)$ of type $v^g$, respectively, and suppose that $k < g$.
Let 
\[
  X := ((\mathbf{x}^{B_1})^{\top}, \ldots, (\mathbf{x}^{B_b})^{\top})
\]
be the incidence matrix of the GDD, where $\mathbf{x}^B$ is the characteristic vector defined by (\ref{eq:char}).
After a suitable permutation of rows of $X$ (if necessary), we have
\[
XX^{\top} =I_g \otimes (\lambda_1 I_v) + (J_g-I_g) \otimes (\lambda_2 J_v),
\]
where $\lambda_s$ denotes the number of blocks containing $S \in R^{\{s\}}$ (see (\ref{ls})) and $\otimes$ denotes the Kronecker product of matrices.
The eigenvalues of the Kronecker product $M \otimes N$ are given by the products of the eigenvalues of two matrices $M$ and $N$, respectively.
As both $I_g \otimes (\lambda_1 I_v)$ and $(J_g - I_g) \otimes (\lambda_2 J_v)$ are symmetric and commutative, they can be simultaneously diagonalized, and hence the eigenvalues of $XX^{\top}$ are $\lambda_1$, $\lambda_1+(g-1)\lambda_2 v$ and $\lambda_1-\lambda_2 v$ with multiplicities $g(v-1)$, $1$ and $g-1$, respectively; see also~\cite[p.160]{R1990}.
By (\ref{ls}), we have
\begin{align*}
\frac{\lambda_1}{\lambda_2} &=\frac{\lambda \binom{g-1}{t-1} v^{t-1}}{\binom{k-1}{t-1}} \frac{\binom{k-2}{t-2}}{\lambda \binom{g-2}{t-2} v^{t-2}}\\
&=v \frac{(g-1)!}{(t-1)!(g-t)!} \frac{(t-1)!(k-t)!}{(k-1)!}\frac{(k-2)!}{(t-2)!(k-t)!} \frac{(t-2)!(g-t)!}{(g-2)!}\\
&= v \frac{g-1}{k-1} \neq v,
\end{align*}
where the last inequality follows from the assumption $k < g$. 
Hence we have $\lambda_1 - \lambda_2 v \neq 0$.
We also note that $\lambda_1+(g-1)\lambda_2 v \geq \lambda_1>0$. 
Therefore, we conclude $\rank X=r$ by the argument as used in the second paragraph of the proof of \cref{OAtoPTE}.
\end{proof}

The following corollary to \cref{GDDtoPTE} is a generalization of the tight solution given in \cref{ex:2731}.

\begin{corollary} [$t$-design-based construction] \label{TdesigntoPTE}
Let $(R,\mathcal{B}_1)$ and $(R,\mathcal{B}_2)$ be a disjoint pair of $t$-$(r,k,\lambda)$ with $b$ blocks, where $r>k$.
Let $X_1$ and $X_2$ be the sets of the characteristic vectors of $\mathcal{B}_1$ and $\mathcal{B}_2$, respectively.
Then $(X_1,X_2)$ is a proper solution of the $\PTE_r$ of degree $t$ and size $b$.
\end{corollary}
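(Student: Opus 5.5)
The plan is to obtain the corollary as the special case $v=1$ of \cref{GDDtoPTE}. By \cref{def:GDD}, a $t$-$(r,k,\lambda)$ design $(R,\mathcal{B})$ is exactly a $\GDD_\lambda(t,k,r)$ of type $1^r$. The group set is $\mathcal{G}=\{\{1\},\ldots,\{r\}\}$, so $g=r$ and $v=1$. Conditions (i)--(iii) of \cref{def:GDD} then hold automatically or reduce to the $t$-design condition:
\begin{itemize}
\item (i) holds because $R$ is the disjoint union of the singletons.
\item (ii) holds because every block meets each singleton group in at most one point.
\item (iii) holds because every $T\in R^{\{t\}}$ meets each singleton in at most one point, so the restriction on $T$ is vacuous and (iii) is exactly the $t$-design condition.
\end{itemize}

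With $\mathcal{G}$ the singleton partition, $(R,\mathcal{B}_1,\mathcal{G})$ and $(R,\mathcal{B}_2,\mathcal{G})$ are two $\GDD_\lambda(t,k,r)$ of type $1^r$ with the same parameters and $b$ blocks. They are disjoint in the sense of \cref{def:disjointGDD}, since $\mathcal{B}_1\cap\mathcal{B}_2=\emptyset$ by hypothesis. The extra hypothesis $k<g$ of \cref{GDDtoPTE} becomes $k<r$, which is precisely the assumption $r>k$. Applying \cref{GDDtoPTE} therefore shows that the characteristic vectors $X_1,X_2$ form a proper $\PTE_r$ solution of degree $t$ and size $b$.

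The main point to check is that the parameter constraints of \cref{def:GDD}, namely $t\le k\le g\le r=gv$, are met. With $g=r$ and $v=1$ they reduce to $t\le k\le r$, which holds for a $t$-$(r,k,\lambda)$ design. The properness argument inside the proof of \cref{GDDtoPTE} also specializes cleanly. For $v=1$ we have $XX^\top=\lambda_1 I_r+\lambda_2(J_r-I_r)$. Its eigenvalues are $\lambda_1-\lambda_2$, which is nonzero since $\lambda_1/\lambda_2=(r-1)/(k-1)\neq1$ when $k<r$, and $\lambda_1+(r-1)\lambda_2>0$.

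One degenerate case needs attention: $t=1$ or $k=1$, where $\lambda_2$ or the ratio may not be defined. If $t=1$, then $\lambda_2$ is still the number of blocks containing a given pair, and the eigenvalue argument goes through once we verify $\lambda_1\neq\lambda_2$ directly. If $k=1$, then $\lambda_2=0$ and $XX^\top=\lambda_1 I_r$ is invertible. In both cases $\rank X=r$, so the corollary follows.
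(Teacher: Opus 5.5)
Your main argument is the paper's own. The corollary is just the case $v=1$ of \cref{GDDtoPTE}: type $1^r$, $g=r$, and $k<g$ becomes $r>k$. Your specialization $XX^{\top}=\lambda_1 I_r+\lambda_2(J_r-I_r)$ with $\lambda_1/\lambda_2=(r-1)/(k-1)\neq 1$ is correct whenever $t\ge 2$.

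The gap is in your last paragraph. For $t=1$ there is no $\lambda_2$: formula (\ref{ls}) applies only for $s\le t$. In a $1$-design the number of blocks through a pair is not constant, so $XX^{\top}$ does not have the form $\lambda_1 I_r+\lambda_2(J_r-I_r)$, and there is nothing to ``verify directly''.

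In fact the conclusion is false for $t=1$. Take $r=4$, $k=2$, $\mathcal{B}_1=\{\{1,2\},\{3,4\}\}$ and $\mathcal{B}_2=\{\{1,3\},\{2,4\}\}$. These are disjoint $1$-$(4,2,1)$ designs, and they do give a degree-$1$ solution, since both column sums are $(1,1,1,1)$. But $b=2<4$, so $\rank X_1\le 2$ and the solution is not proper.

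So the statement tacitly needs $t\ge 2$. The same holds for \cref{GDDtoPTE}, whose properness argument takes $\lambda_2$ from (\ref{ls}) with $s=2$, and \cref{OAtoPTE} makes this hypothesis explicit. You should impose $t\ge 2$ rather than claim the case $t=1$ goes through.

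Your $k=1$ case is harmless but vacuous. The condition $k\ge t$ forces $t=1$, and the only $1$-$(r,1,\lambda)$ design whose blocks form a set is $R^{\{1\}}$ itself, so no disjoint pair exists.
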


%%%%%%%%%%%%%%%%%%%%%%%%%%%%%%%%%%%%%%%%%%%%
%%%%%%%%%%%%%%%%%%%%%%%%%%%%%%%%%%%%%%%%%%%%
\section{Lifting constructions} \label{sect:lifting}

In this section we embed a low-dimensional PTE solution into OA or Type-I OA in order to obtain a higher-dimensional solution.
The results of this section provide a number-theoretic interpretation of the so-called {\it recursive constructions} in combinatorial design theory (cf.~\cite[Theorem 18.4, Example 19.13]{LintWilson2001}), and generalize the Borwein solution (\ref{Borwein}) and its two-dimensional extension (\cref{2DBorwein}), and a key proposition in Jacroux's work \cite[Lemma 1]{Jacroux1995} on the construction of sets of integers with equal power sums; see \cref{thm:GeneralizedBorwein1,cor:jac} below.

%%%%%%%%%%%%%%%%%%%%%%%%%%%%%%%%%%%%%%%%%%%%%
%%%%%%%%%%%%%%%%%%%%%%%%%%%%%%%%%%%%%%%%%%%%%
\subsection{Combinatorial array lifting and Borwein solution} \label{sect:array_lifting}

We start with an OA-based lifting construction.

\begin{theorem} [OA lifting] \label{thm:OAL1}
Let $m \ge 2$ be an even integer.
Assume that there exists an $\OA(\ell,r,s,r)_{\lambda}$ with $s \ge m+1$.
Moreover, assume that $\pm A_i, \pm B_i$ are mutually distinct rationals for which
\begin{gather}
   [A_1, \ldots, A_s] =_m^s [B_1, \ldots, B_s], \label{eq:degt0} \\
   \sum_{i=1}^s A_i^{m+2}= \sum_{i=1}^s B_i^{m+2}, \label{eq:degtplus2-0} \\
   \sum_{i=1}^s A_i =\sum_{i=1}^s B_i=0. \label{eq:lin2-0}
\end{gather}
Let $X$ (resp.\ $Y$) be the set of rows of the OA, with symbols $0,1,\ldots, s-1$ replaced by $\pm A_1, \pm A_2$, $\ldots, \pm A_s$ (resp.\ $\pm B_1, \pm B_2, \ldots, \pm B_s$).
Then $(X,Y)$ is
a proper solution
% a nontrivial solution
of degree $m+3$ and size $2\ell$ for the $\PTE_r$.
\end{theorem}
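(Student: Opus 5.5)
The plan is to compute every monomial power sum over $X$ and $Y$ explicitly: the orthogonal array of strength $r$ has full strength, so each sum factors into one-dimensional power sums of the $A_i$ (resp.\ $B_i$). Since the array has $r$ columns and strength $r$, it consists of $\lambda$ copies of every $r$-tuple over $\{0,\ldots,s-1\}$, so $\ell=\lambda s^r$. The construction is read as follows: each row $(c_1,\ldots,c_r)$ yields the two vectors $\pm(A_{c_1+1},\ldots,A_{c_r+1})$, so $|X|=2\ell$, and $Y$ is formed in the same way. Write $p_k(A):=\sum_{i=1}^s A_i^k$, with $p_0(A)=s$, and define $p_k(B)$ likewise.

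\textbf{Step 1 (the factorization).} For a monomial $f(\mathbf z)=z_1^{k_1}\cdots z_r^{k_r}$ with $K:=k_1+\cdots+k_r$, summing over all $s^r$ tuples gives
\[
\sum_{\mathbf{x}\in X} f(\mathbf{x}) = \lambda\bigl(1+(-1)^K\bigr)\prod_{j=1}^r p_{k_j}(A),
\]
and the analogous formula holds for $Y$. Hence odd $K$ contributes $0$ on both sides. For even $K$ with $2\le K\le m+2$ (the even degrees up to $m+3$), it suffices to show $\prod_j p_{k_j}(A)=\prod_j p_{k_j}(B)$.

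\textbf{Step 2 (case analysis on exponents).} Each $k_j$ satisfies $k_j\le K\le m+2$.
\begin{itemize}
\item If every $k_j\le m$, the factors agree termwise by \eqref{eq:degt0}; the factors with $k_j=0$ both equal $s$.
\item If some $k_j=m+2$, then all other exponents are $0$, and \eqref{eq:degtplus2-0} applies.
\item If some $k_j=m+1$, which is odd since $m$ is even, then $K$ even and $K\le m+2$ force exactly one other exponent $k_{j'}=1$. In that case both products vanish, because $p_1(A)=p_1(B)=0$ by \eqref{eq:lin2-0}.
\end{itemize}
This gives $[X]=_{m+3}^{2\ell}[Y]$.

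\textbf{Step 3 (disjointness).} Every coordinate of a vector of $X$ lies in $\{\pm A_i\}$ and every coordinate of a vector of $Y$ lies in $\{\pm B_i\}$. These sets are disjoint by the assumption that the $\pm A_i,\pm B_i$ are mutually distinct, so $X\cap Y=\emptyset$.

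\textbf{Step 4 (properness).} Apply the same factorization to $X^{\top}X$. The $(j,j)$ entry is $2\lambda s^{r-1}p_2(A)$, and an off-diagonal entry is $2\lambda s^{r-2}p_1(A)^2=0$. Thus $X^{\top}X$ is the scalar matrix $2\lambda s^{r-1}p_2(A)\,I_r$. Here $p_2(A)>0$, since the $A_i$ are distinct and so not all zero. Therefore $\rank X=r$, either by \cref{fullrank} with $b=0$ or directly, and the same argument gives $\rank Y=r$.

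The main obstacle is Step 2: ensuring that no even total degree up to $m+2$ escapes the three hypotheses. The parity of $m$ is exactly what confines the dangerous exponent $m+1$ to pairing with a single exponent $1$, where the linearity condition \eqref{eq:lin2-0} kills both sides. Note that the hypothesis $s\ge m+1$ is not used in this argument; it only reflects that an ideal-type $\PTE_1$ solution of degree $m$ needs at least $m+1$ terms.
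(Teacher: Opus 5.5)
Your proof is correct and follows the paper's argument. You use $X=-X$, $Y=-Y$ to kill odd degrees, express the even-degree sums through power sums of the $A_i$ and $B_i$, and observe that at degree $m+2$ only $p_{m+2}$ and $p_1p_{m+1}$ escape the degree-$m$ hypothesis, which the extra power-sum condition and linearity then handle. Properness comes from $X^{\top}X$, as in the paper. Your observation that strength $r$ makes the sum literally $\lambda\bigl(1+(-1)^K\bigr)\prod_j p_{k_j}(A)$ is a cleaner substitute for the paper's appeal to \cref{lem:sym}, which is really needed only in the Type-I variant. It also shows directly that $X^{\top}X$ is a nonzero scalar matrix.
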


\begin{remark} \label{rem:curious1}
If $s=m+1$ then the condition (\ref{eq:degtplus2-0}) follows from the conditions (\ref{eq:degt0}) and (\ref{eq:lin2-0}); see \cref{lin} for details.
\end{remark}

In our proof of \cref{thm:OAL1}, we make use of the Girard--Newton formulae and their consequence, where we write $p_k^{(\mathbf{z})}$ and $e_k^{(\mathbf{z})}$ for the power-sum and the elementary symmetric polynomial of degree $k$ in variables $z_1,\ldots,z_n$, respectively.

\begin{lemma} [{Girard--Newton formulae}] \label{lem:GN}  
It holds that for $k, n \in \mathbb{Z}_{>0}$,
\begin{align*}
p_k^{(\mathbf{z})}-e_1^{(\mathbf{z})}p_{k-1}^{(\mathbf{z})}+\cdots+ (-1)^{k-1}e_{k-1}^{(\mathbf{z})}p_1^{(\mathbf{z})}+(-1)^k ke_k^{(\mathbf{z})}=0 && (1 \leq k \leq n),\\
p_k^{(\mathbf{z})}-e_1^{(\mathbf{z})}p_{k-1}^{(\mathbf{z})}+\cdots+ (-1)^{n-1}e_{n-1}^{(\mathbf{z})}p_{k-n+1}^{(\mathbf{z})}+(-1)^n e_n^{(\mathbf{z})}p_{k-n}^{(\mathbf{z})}=0 && (k>n).
\end{align*}
\end{lemma}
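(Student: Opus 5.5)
We would prove both identities by the standard generating-function/root-substitution argument, working in the polynomial ring $\mathbb{Q}[z_1,\ldots,z_n]$ (so the identities hold formally, hence for any rational specialization). Throughout, set $p_0^{(\mathbf{z})}:=n$ and $e_0^{(\mathbf{z})}:=1$, and recall that $e_j^{(\mathbf{z})}=0$ for $j>n$.

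For the case $k>n$ we use the monic polynomial with roots $z_1,\ldots,z_n$:
\[
P(x):=\prod_{i=1}^n (x-z_i)=\sum_{j=0}^n (-1)^j e_j^{(\mathbf{z})} x^{n-j}.
\]
Since $P(z_i)=0$, we also have $z_i^{k-n}P(z_i)=0$ for every $i$. Summing these over $i=1,\ldots,n$ and exchanging the two finite sums gives
\[
\sum_{j=0}^n (-1)^j e_j^{(\mathbf{z})}\, p_{k-j}^{(\mathbf{z})}=0,
\]
which is exactly the second identity. The same computation with $k=n$ yields the first identity at $k=n$, because $p_0^{(\mathbf{z})}=n$ turns the last term into $(-1)^n n e_n^{(\mathbf{z})}$.

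For $1\le k\le n$ this trick does not directly apply, because negative powers of $z_i$ would appear. Instead we pass to formal power series in $x$ and take the logarithmic derivative of
\[
E(x):=\prod_{i=1}^n (1-z_i x)=\sum_{j\ge 0} (-1)^j e_j^{(\mathbf{z})} x^j .
\]
This gives
\[
-x\,\frac{E'(x)}{E(x)}=\sum_{i=1}^n \frac{z_i x}{1-z_i x}=\sum_{k\ge 1} p_k^{(\mathbf{z})} x^k .
\]
Clearing the denominator, we obtain
\[
-xE'(x)=E(x)\sum_{k\ge1}p_k^{(\mathbf{z})}x^k .
\]
The coefficient of $x^k$ on the left is $-(-1)^k k\, e_k^{(\mathbf{z})}$. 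On the right it is $\sum_{j=0}^{k-1}(-1)^j e_j^{(\mathbf{z})} p_{k-j}^{(\mathbf{z})}$. Moving everything to one side gives the first identity for every $k\ge1$. It also reproves the second identity, since for $k>n$ the left coefficient vanishes and the sum truncates at $j=n$. So the generating-function argument alone would suffice; the root-substitution argument is just a more transparent route for $k\ge n$.

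The only step that needs care is the bookkeeping of the boundary terms: the factor $k$ on $e_k^{(\mathbf{z})}$, which comes from the derivative, and the convention $p_0^{(\mathbf{z})}=n$, which makes the two forms agree at $k=n$. Otherwise the argument is routine. Since this is classical, it is also reasonable simply to cite a standard reference for symmetric functions.
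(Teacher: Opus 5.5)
The paper gives no proof of this lemma. It quotes the Girard--Newton formulae as a classical fact and uses them only as a tool, in \cref{lem:sym}, \cref{thm:OAL1} and \cref{lin}. Your argument is correct and self-contained, so it supplies what the paper leaves to the literature.

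The logarithmic-derivative computation is right. Since $E(0)=1$, $E(x)$ is invertible in the power series ring. The coefficient of $x^k$ in $-xE'(x)$ is $-(-1)^k k\,e_k^{(\mathbf{z})}$. For $k>n$, the vanishing of $e_j^{(\mathbf{z})}$ for $j>n$ kills the left side and truncates the sum at $j=n$. So, as you note, this argument alone yields both identities.

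Your root-substitution route for $k\ge n$ is also correct, with the convention $p_0^{(\mathbf{z})}=n$ producing the factor $n\,e_n^{(\mathbf{z})}$ at $k=n$. It is exactly the device the paper itself uses in the proof of \cref{lin}, where it sums $x_i\,q^{(\mathbf{x})}(x_i)=0$ over $i$; that is your identity at $k=n+1$.
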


\begin{lemma} [{\cite[Chapter~7, \S 1, Theorem~8]{CLO2025}}] \label{lem:sym}
Any symmetric polynomial $g(\mathbf{z}) = g(z_1,\ldots,z_n)$ is expressed as a polynomial of power sums $p_1^{(\mathbf{z})}, \ldots, p_n^{(\mathbf{z})}$.
\end{lemma}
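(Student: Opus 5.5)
The plan is to go through the elementary symmetric polynomials. First show that every symmetric $g$ is a polynomial in $e_1^{(\mathbf{z})},\ldots,e_n^{(\mathbf{z})}$. Then use the first family of Girard--Newton identities in \cref{lem:GN} to rewrite each $e_k^{(\mathbf{z})}$, for $k\le n$, as a polynomial in $p_1^{(\mathbf{z})},\ldots,p_k^{(\mathbf{z})}$. Substituting the second step into the first gives the claim. Throughout, coefficients lie in $\mathbb{Q}$ (or any field of characteristic $0$), and this matters in the second step.

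\emph{Step 1 (fundamental theorem of symmetric polynomials).} Fix the lexicographic order on monomials $z_1^{a_1}\cdots z_n^{a_n}$. Let $c\,\mathbf{z}^{\mathbf{a}}$ be the leading term of a nonzero symmetric $g$.
\begin{enumerate}[label=$(\alph*)$]
\item Since $g$ is invariant under permuting the variables, every permutation of $\mathbf{a}$ also occurs in $g$. Maximality of $\mathbf{a}$ therefore forces $a_1\ge a_2\ge\cdots\ge a_n$.
\item The product $h:=(e_1^{(\mathbf{z})})^{a_1-a_2}(e_2^{(\mathbf{z})})^{a_2-a_3}\cdots(e_n^{(\mathbf{z})})^{a_n}$ has leading monomial $\mathbf{z}^{\mathbf{a}}$ with coefficient $1$. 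This is because the leading term of a product is the product of the leading terms, and the leading term of $e_k^{(\mathbf{z})}$ is $z_1\cdots z_k$.
\item Hence $g-c\,h$ is symmetric. It is either $0$ or has a strictly smaller leading monomial, and its total degree is no larger than that of $g$.
\item There are only finitely many monomials of bounded degree, so this reduction terminates. This expresses $g$ as a polynomial in the $e_k^{(\mathbf{z})}$.
\end{enumerate}

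\emph{Step 2 (from $e$'s to $p$'s).} Use induction on $k$. The identity for $1\le k\le n$ in \cref{lem:GN} gives
\[
e_k^{(\mathbf{z})}=\frac{(-1)^{k-1}}{k}\Bigl(p_k^{(\mathbf{z})}-e_1^{(\mathbf{z})}p_{k-1}^{(\mathbf{z})}+\cdots+(-1)^{k-1}e_{k-1}^{(\mathbf{z})}p_1^{(\mathbf{z})}\Bigr).
\]
By the induction hypothesis, $e_1^{(\mathbf{z})},\ldots,e_{k-1}^{(\mathbf{z})}$ are already polynomials in $p_1^{(\mathbf{z})},\ldots,p_{k-1}^{(\mathbf{z})}$. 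So $e_k^{(\mathbf{z})}$ is a polynomial in $p_1^{(\mathbf{z})},\ldots,p_k^{(\mathbf{z})}$ with rational coefficients. Division by $k$ is exactly where characteristic $0$ is used. Substituting these expressions into the result of Step 1 finishes the proof.

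The only genuinely nontrivial point is Step 1. I expect the main care to be needed in (a), the sorted-exponent argument, and in the termination of the descending lex induction. Both are standard, so I would state them briefly, or simply cite \cite{CLO2025} for the fundamental theorem.
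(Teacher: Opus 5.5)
Your proposal is correct and takes the same route as the paper: reduce to $e_1^{(\mathbf{z})},\ldots,e_n^{(\mathbf{z})}$ via the fundamental theorem of symmetric polynomials, then invert the first family of Girard--Newton identities (\cref{lem:GN}) to write each $e_k^{(\mathbf{z})}$ in terms of $p_1^{(\mathbf{z})},\ldots,p_k^{(\mathbf{z})}$. The paper simply cites the fundamental theorem instead of reproving it by the lex leading-term argument, and it leaves implicit the division by $k$ that you rightly flag as needing characteristic $0$.
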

\begin{proof}
By the fundamental theorem of symmetric polynomials, $g(\mathbf{z})$ is expressed as a polynomial of elementary symmetric polynomials $e_1^{(\mathbf{z})}, \ldots, e_n^{(\mathbf{z})}$.
By \cref{lem:GN}, $g(\mathbf{z})$ is further expressed as a polynomial of power sums $p_1^{(\mathbf{z})}, \ldots, p_n^{(\mathbf{z})}$.
\end{proof}

We are now ready to complete the proof of \cref{thm:OAL1}.

\begin{proof}[Proof of \cref{thm:OAL1}]
Since $X = -X$, $Y = -Y$, and $m$ is even, it is sufficient to show that
\begin{equation} \label{eq:lifting4-0}
\sum_{\mathbf{x} \in X} f(\mathbf{x}) = \sum_{\mathbf{y} \in Y} f(\mathbf{y}) \ \text{ for every monomial $f(\mathbf{z}) = z_1^{k_1} \cdots z_{t'}^{k_{t'}}$ of even degree $\le m+2$}.
\end{equation}
Without loss of generality, we may assume that $k_1,\ldots,k_{t'}$ are all positive.
It follows by the regularity condition of OA that for any $t' \le r$,
\begin{align*}
\sum_{\mathbf{x} \in X} f(\mathbf{x})
& = \lambda_{t'} \sum_{ (a_1,\ldots,a_{t'}) \in \{A_1,\ldots,A_s \}^{t'} } \left( \prod_{i=1}^{t'} (a_i^{k_i} + (-a_i)^{k_i}) \right) = 2\lambda_{t'} \sum_{ (a_1,\ldots,a_{t'}) \in \{A_1,\ldots,A_s \}^{t'}} \prod_{i=1}^{t'} a_i^{k_i}, \\
\sum_{\mathbf{y} \in Y} f(\mathbf{y})
& = \lambda_{t'}  \sum_{ (b_1,\ldots,b_{t'}) \in \{B_1,\ldots,B_s \}^{t'} } \left( \prod_{i=1}^{t'} (b_i^{k_i} + (-b_i)^{k_i}) \right) = 2\lambda_{t'} \sum_{ (b_1,\ldots,b_{t'}) \in \{B_1,\ldots,B_s \}^{t'} } \prod_{i=1}^{t'} b_i^{k_i},
\end{align*}
where $\lambda_{t'}$ is the notation defined in \cref{OAreg}.
Then there exists some homogeneous symmetric polynomial $g(z_1,\ldots,z_s)$ of degree $d$, where $d = \deg f$, such that
\[
g(A_1,\ldots,A_s) = \sum_{\mathbf{x} \in X} f(\mathbf{x}), \qquad g(B_1,\ldots,B_s) = \sum_{\mathbf{y} \in Y} f(\mathbf{y}).
\]
Let $\delta = \min\{d, s\}$.
By \cref{lem:sym}, $g(\mathbf{A})$ is expressed as a polynomial of power sums $p_1^{(\mathbf{A})},\ldots,p_{\delta}^{(\mathbf{A})}$, and similarly for $g(\mathbf{B})$.
If $d \le m$, then we obtain (\ref{eq:lifting4-0}) by the assumption (\ref{eq:degt0}).
If $d = m+2$, then the only term in $g(\mathbf{z})$ containing $p_{m+1}^{(\mathbf{z})}$ is $c p_1^{(\mathbf{z})} p_{m+1}^{(\mathbf{z})}$ where $c\in\Q$.
By the assumption (\ref{eq:lin2-0}), we obtain $c p_1^{(\mathbf{A})} p_{m+1}^{(\mathbf{A})} = c p_1^{(\mathbf{B})} p_{m+1}^{(\mathbf{B})} = 0$.
Hence, by the assumptions (\ref{eq:degt0}) and (\ref{eq:degtplus2-0}), we obtain (\ref{eq:lifting4-0}).

The properness of our solution follows by noting that $\rank X=\rank Y=r$, as in the second paragraph of the proof of \cref{OAtoPTE}.
\end{proof}

We now give a Type-I OA lifting, where we use the {\it regularity condition} for Type-I OA, namely the condition that for any $t'\le t$, an $\OA_I(\ell,r,s,t)_{\lambda}$ is an $\OA_I(\ell,r,s,t')_{\lambda_{t'}}$.

\begin{theorem} [Type-I OA lifting] \label{thm:OAL3} 
Let $m \ge 2$ be an even integer.
Assume that there exists an $\OA_I(\ell,r,s,s)_{\lambda}$ with $s \ge m+1$.
Moreover, assume that $\pm A_i, \pm B_i$ are mutually distinct rationals for which
\begin{gather}
  [A_1, \ldots, A_s] =_m^s [B_1, \ldots, B_s], \label{eq:degm} \\
   \sum_{i=1}^s A_i^{m+2}= \sum_{i=1}^s B_i^{m+2}, \label{eq:degmplus2} \\
   \sum_{i=1}^s A_i =\sum_{i=1}^s B_i=0. \label{eq:lin2}
\end{gather}
Let $X$ (resp.\ $Y$) be the set of rows of the Type-I OA, with symbols $0,1,\ldots, s-1$ replaced by $\pm A_1, \pm A_2, \ldots, \pm A_s$ (resp.\ $\pm B_1, \pm B_2, \ldots, \pm B_s$).
Then $(X,Y)$ is a solution of degree $m+3$ and size $2\ell$ for the $\PTE_r$.
\end{theorem}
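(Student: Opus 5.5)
We follow the proof of \cref{thm:OAL1}, replacing the regularity of OA by the regularity of Type-I OA. We need to interpret the substitution of $\pm A_i$ for symbol $i-1$ the same way as in \cref{thm:OAL1}: each row of the Type-I OA yields rows in which every entry $i-1$ is replaced by $A_i$ or $-A_i$, and we take all sign choices coordinatewise. Then $X=-X$ and $Y=-Y$ as multisets. Since every monomial of odd degree then sums to zero on both sides, and $m+3$ is odd, it suffices to verify
\[
\sum_{\mathbf{x}\in X} f(\mathbf{x})=\sum_{\mathbf{y}\in Y} f(\mathbf{y})
\]
for monomials $f=z_{j_1}^{k_1}\cdots z_{j_{t'}}^{k_{t'}}$ with all $k_i\ge 1$ and even total degree $d\le m+2$. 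If some $k_i$ is odd, the sign-sum over that coordinate already kills both sides.

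Fix such an $f$ with $t'$ distinct columns. Since $t'\le s$, regularity of Type-I OA shows that the restriction to columns $j_1,\ldots,j_{t'}$ contains every ordered $t'$-tuple of \emph{distinct} symbols exactly $\lambda_{t'}$ times. Summing over signs gives a factor $2^{t'}$, so
\[
\sum_{\mathbf{x}\in X} f(\mathbf{x}) = 2^{t'}\lambda_{t'} \sum_{(i_1,\ldots,i_{t'})\ \text{distinct}} \prod_{u=1}^{t'} A_{i_u}^{k_u},
\]
and likewise for $Y$ with $B$ in place of $A$. The right-hand side is $g(A_1,\ldots,A_s)$ for a homogeneous symmetric polynomial $g$ of degree $d$; it is a monomial symmetric function up to a constant. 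The same $g$ evaluated at $\mathbf{B}$ gives the $Y$-sum.

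By \cref{lem:sym} and the Girard--Newton formulae (\cref{lem:GN}), $g$ is a polynomial in the power sums $p_1,\ldots,p_{\min\{d,s\}}$. More precisely, expanding the distinct-index sum via inclusion--exclusion over set partitions writes $g$ directly as a $\mathbb{Q}$-combination of products $\prod_{\text{blocks}} p_{\sum k_u}$, each product having total weight $d$. If $d\le m$, every power sum involved has index at most $m$, and these agree for $\mathbf{A}$ and $\mathbf{B}$ by (\ref{eq:degm}). If $d=m+2$, the products involve power sums of index at most $m+2$. The index $m+2$ can only occur as $p_{m+2}$ alone, and these agree by (\ref{eq:degmplus2}). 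The index $m+1$ can only occur in the product $p_1p_{m+1}$, which vanishes on both sides by (\ref{eq:lin2}). All remaining products involve only indices at most $m$. Hence $g(\mathbf{A})=g(\mathbf{B})$.

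Finally, disjointness of $X$ and $Y$ follows from the assumption that $\pm A_i,\pm B_i$ are mutually distinct, so no entry of a row of $X$ equals an entry of a row of $Y$. The sizes agree by construction. The only step needing care is the bookkeeping for $d=m+2$: checking that the weight-$(m+2)$ expansion contains $p_{m+1}$ only multiplied by $p_1$. This is immediate because every $k_u\ge 1$ and the weights sum to $m+2$.
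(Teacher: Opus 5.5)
Your overall strategy matches the paper's. You reduce to even-degree monomials with positive exponents, use regularity of the Type-I OA to turn the $X$-sum into a symmetric function of $A_1,\ldots,A_s$, expand in power sums, and observe that $p_{m+1}$ can only occur as $p_1p_{m+1}$. Your set-partition expansion is a cleaner justification than the paper's appeal to \cref{lem:sym}, since it is an identity in any number of variables. The problem is how you read $X$.

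The theorem asserts size $2\ell$, so $X=\{\pm\mathbf{x}_1,\ldots,\pm\mathbf{x}_\ell\}$, where $\mathbf{x}_j$ is the $j$-th row with symbol $i-1$ replaced by $A_i$. This is also how the theorem is used in \cref{thm:GeneralizedBorwein1}: there $\OA_I(6,3,3,3)_1$ produces the $12$ vectors $\pm(A_{\sigma(1)},A_{\sigma(2)},A_{\sigma(3)})$. Independent coordinatewise signs give $2^r\ell$ vectors, so ``the sizes agree by construction'' fails for the stated size, and you are proving the result for a different multiset. (The paper's displayed intermediate $\prod_i(a_i^{k_i}+(-a_i)^{k_i})$ has the same ambiguity, but its final factor $2\lambda_{t'}$ and the size $2\ell$ fix the row-wise reading.)

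Under the correct reading, your step ``if some $k_i$ is odd, the sign-sum over that coordinate already kills both sides'' fails. You only have $f(-\mathbf{x})=(-1)^{d}f(\mathbf{x})$, so odd total degree vanishes, but odd individual exponents do not. For example,
$\sum_{\mathbf{x}\in X}x_1x_2=2\lambda_2\bigl((p_1^{(\mathbf{A})})^2-p_2^{(\mathbf{A})}\bigr)$ and
$\sum_{\mathbf{x}\in X}x_1x_2^{m+1}=2\lambda_2\bigl(p_1^{(\mathbf{A})}p_{m+1}^{(\mathbf{A})}-p_{m+2}^{(\mathbf{A})}\bigr)$.
These are exactly the monomials that need the odd-index equalities in (\ref{eq:degm}) and the linearity (\ref{eq:lin2}). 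Under your coordinatewise reading every surviving $k_u$ is even, so every block sum is even and $p_1$, $p_{m+1}$ never occur. Your $p_1p_{m+1}$ step would then be vacuous and (\ref{eq:lin2}) unused, which signals that the reading is wrong.

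The repair is easy. For even $d$ you have $f(-\mathbf{x})=f(\mathbf{x})$, so
$\sum_{\mathbf{x}\in X}f(\mathbf{x})=2\lambda_{t'}\sum_{(i_1,\ldots,i_{t'})\text{ distinct}}\prod_u A_{i_u}^{k_u}$ with arbitrary $k_u\ge 1$. Your set-partition bookkeeping then goes through verbatim, because it never used the parity of the individual $k_u$. Separately, the factor $2^{t'}$ is wrong in either reading: it is $2$ row-wise and would be $2^r$ coordinatewise. This is harmless, since the factor is common to both sides.
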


\begin{proof}
Since $X = -X$, $Y = -Y$, and $t$ is even, it is sufficient to show that
\begin{equation} \label{eq:lifting4}
\sum_{\mathbf{x} \in X} f(\mathbf{x}) = \sum_{\mathbf{y} \in Y} f(\mathbf{y}) \ \text{ for every monomial $f(\mathbf{z}) = z_1^{k_1} \cdots z_{t'}^{k_{t'}}$ of even degree $\le m+2$}.
\end{equation}
Without loss of generality, we may assume that $k_1,\ldots,k_{t'}$ are all positive.
It follows by the regularity condition of Type-I OA that for any $t' \le s$,
\begin{align*}
\sum_{\mathbf{x} \in X} f(\mathbf{x})
&
= \lambda_{t'} \sum_{(a_1,\ldots,a_{t'}) \in \{A_1,\ldots,A_s \}^{(t')}} \left(\prod_{i=1}^{t'} (a_i^{k_i} + (-a_i)^{k_i}) \right)
= 2\lambda_{t'} \sum_{(a_1,\ldots,a_{t'}) \in \{A_1,\ldots,A_s \}^{(t')}} \prod_{i=1}^{t'} a_i^{k_i}, \\
\sum_{\mathbf{y} \in Y} f(\mathbf{y})
&
= \lambda_{t'}  \sum_{(b_1,\ldots,b_{t'}) \in \{B_1,\ldots,B_s \}^{(t')}} \left(\prod_{i=1}^{t'} (b_i^{k_i} + (-b_i)^{k_i}) \right)
= 2\lambda_{t'}  \sum_{(b_1,\ldots,b_{t'}) \in \{B_1,\ldots,B_s \}^{(t')}} \prod_{i=1}^{t'} b_i^{k_i},
\end{align*}
where $\{C_1,\ldots,C_s \}^{(t')}$ denotes the set of all $s!/(s-t')!$ permutations of $t'$ distinct symbols among $s$ distinct symbols $C_1,\ldots,C_s$.
Then, by using \cref{lem:sym} as in the proof of \cref{thm:OAL1}, we obtain the desired equation (\ref{eq:lifting4}).
\end{proof}

We now treat a modification of \cref{thm:OAL3}.
Note that the two-dimensional Borwein solution has the linearity condition (\ref{eq:linear0}).

\begin{theorem} [Three-dimensional Borwein solution] \label{thm:GeneralizedBorwein1}
Assume that there exists a disjoint pair of multisets $\{\pm  A_1, \pm A_2, \pm A_3\}, \{\pm B_1, \pm B_2, \pm B_3\} \subset \mathbb{Q}$ for which
\begin{equation} \label{PTE1deg1,2}
[A_1,A_2,A_3] =_2^3 [B_1,B_2,B_3],
\end{equation}
\begin{equation} \label{PTE1deg4}
A_1^4 + A_2^4 + A_3^4 = B_1^4 + B_2^4 + B_3^4, 
\end{equation}
\begin{equation} \label{eq:linear1}
A_1 + A_2 + A_3 = B_1 + B_2 + B_3 = 0. 
\end{equation}
Then it holds that
\begin{multline} \label{eq:GeneralizedBorwein1}
[\pm(A_1,A_2,A_3), \pm(A_2,A_3,A_1), \pm(A_3,A_1,A_2)] \\=_5^6 [\pm(B_1,B_2,B_3), \pm(B_2,B_3,B_1), \pm(B_3,B_1,B_2)].
\end{multline}
\end{theorem}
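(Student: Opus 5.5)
The plan is to check the equal-power-sum identities directly: symmetry kills all odd-degree monomials, and a cyclic-sum computation reduces the even-degree ones to power sums of the $A_i$ and $B_i$. The three cyclic rows form an $\OA_I(3,3,3,1)_1$, which has strength only $1$, so \cref{thm:OAL3} cannot be applied verbatim. Instead we use the fact that the rows are the orbit of $(A_1,A_2,A_3)$ under the cyclic shift, together with the linearity condition (\ref{eq:linear1}).

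\textbf{Odd degrees.} Both multisets in (\ref{eq:GeneralizedBorwein1}) are closed under negation. Hence every monomial of odd degree sums to zero on each side, and it suffices to treat monomials $f=z_1^{k_1}z_2^{k_2}z_3^{k_3}$ of even degree $d\in\{2,4\}$. For such $f$, the sum over the left multiset is $2\sum_{\sigma\in\langle(123)\rangle}\prod_j A_{\sigma(j)}^{k_j}$. This is a cyclically invariant polynomial $g_f(A_1,A_2,A_3)$, and the same polynomial evaluated at the $B_i$ gives the right-hand side.

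\textbf{Symmetrization.} We want to reduce to symmetric polynomials so that \cref{lem:sym} and the Girard--Newton formulae (\cref{lem:GN}) apply. A cyclically invariant polynomial in three variables is not symmetric in general: it differs from its transpose by a multiple of the Vandermonde $\Delta=(z_1-z_2)(z_2-z_3)(z_3-z_1)$, which has degree $3$. So in degree $2$, $g_f$ is automatically symmetric. In degree $4$, write $g_f=h+\Delta\cdot \ell$ with $h$ symmetric and $\ell$ an antisymmetric-quotient of degree $1$. Since $\Delta\cdot\ell$ must itself be cyclically invariant, $\ell$ is a cyclically invariant linear form, hence $\ell=c\,p_1$. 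Under (\ref{eq:linear1}) this term vanishes for both $A$ and $B$.

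\textbf{Power sums.} The symmetric part $h$ is a polynomial in $p_1,p_2,p_3$ with the monomials weighted to degree $d$. In degree $4$ the monomials are $p_4$, $p_1p_3$, $p_2^2$ and $p_1^2p_2$; $p_4$ is expressed through $p_1,p_2,p_3$ via \cref{lem:GN}. We will instead keep $p_4$ as a variable and write $h$ in terms of $p_1,p_2,p_4$ plus terms divisible by $p_1$. Since $p_1=0$, only $p_2^2$ and $p_4$ survive, and these agree for $A$ and $B$ by (\ref{PTE1deg1,2}) and (\ref{PTE1deg4}). For degree $2$, $h$ is a combination of $p_2$ and $p_1^2$, which agree by (\ref{PTE1deg1,2}). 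This gives equality of all monomial sums of degree $\le 5$, so (\ref{eq:GeneralizedBorwein1}) is a $\PTE_3$ solution of degree $5$ and size $6$. Disjointness of the two multisets follows from the hypothesis that the multisets $\{\pm A_i\}$ and $\{\pm B_i\}$ are disjoint, since every coordinate of every row on the left is some $\pm A_i$.

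\textbf{Main obstacle.} The step to get right is the degree-$4$ reduction. The cleanest route is concrete: list the cyclic classes of exponent vectors of degree $4$, namely $(4,0,0)$, $(3,1,0)$, $(1,3,0)$, $(2,2,0)$ and $(2,1,1)$, and compute each $g_f$ by hand. Terms such as $\sum_{\mathrm{cyc}}A_1^3A_2$ and $\sum_{\mathrm{cyc}}A_1A_2^3$ are not individually symmetric, but their difference is $p_1\cdot\Delta$ up to sign, hence $0$, and their sum is symmetric. Thus each reduces to $p_4$ and $p_2^2$ once $p_1=0$. If the abstract argument feels shaky, these five explicit checks, using $A_3=-A_1-A_2$, settle it directly.
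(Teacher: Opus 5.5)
Your proof is correct and is essentially the paper's argument in different packaging. Your symmetric part $h=\tfrac12(g_f+\tau g_f)$ is, up to a constant factor, the monomial sum over the twelve-row lift of the full $\OA_I(6,3,3,3)_1$, which the paper handles by invoking \cref{thm:OAL3}; your alternating part $c\,p_1\Delta$ is, up to a constant factor, the cyclic-minus-anticyclic difference that the paper kills with its explicit identities under (\ref{eq:linear1}), and your Vandermonde observation explains why those identities hold.
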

\begin{proof}
By the assumptions (\ref{PTE1deg1,2}), (\ref{PTE1deg4}), and (\ref{eq:linear1}), we have
\[
p_1^{(A_1,A_2,A_3)} = p_1^{(B_1,B_2,B_3)} = 0, \; p_2^{(A_1,A_2,A_3)} = p_2^{(B_1,B_2,B_3)}, \; p_4^{(A_1,A_2,A_3)} = p_4^{(B_1,B_2,B_3)}.
\]
We apply \cref{thm:OAL3} for $\OA_I(6,3,3,3)_1$ given in \cref{ex:OAII}.
Then we have
\small
\begin{multline} \label{eq:GeneralizedBorwein2}
[\pm(A_1,A_2,A_3), \pm(A_2,A_3,A_1), \pm(A_3,A_1,A_2), \pm(A_2,A_1,A_3), \pm(A_3,A_2,A_1), \pm(A_1,A_3,A_2)]
\\
=_5^{12} [\pm(B_1,B_2,B_3), \pm(B_2,B_3,B_1), \pm(B_3,B_1,B_2), \pm(B_2,B_1,B_3), \pm(B_3,B_2,B_1), \pm(B_1,B_3,B_2)].
\end{multline}
\normalsize
By using the linearity condition (\ref{eq:linear1}), we can verify that
\[
\begin{gathered}
\sum_{i=1}^3 A_i A_{i+1}^3 = \sum_{i=1}^3 A_i^3 A_{i+1}, \quad
\sum_{i=1}^3 B_i B_{i+1}^3 = \sum_{i=1}^3 B_i^3 B_{i+1}, \\
\sum_{i=1}^3 A_i A_{i+1} A_{i+2}^2 = \sum_{i=1}^3 A_i A_{i+1}^2 A_{i+2}, \quad
\sum_{i=1}^3 B_i B_{i+1} B_{i+2}^2 = \sum_{i=1}^3 B_i B_{i+1}^2 B_{i+2},
\end{gathered}
\]
where the subscripts of $A$ and $B$ are reduced cyclically modulo $3$.
This implies that
\[
\sum_{i=1}^3 (f(A_i,A_{i+1},A_{i+2}) + f(-A_i,-A_{i+1},-A_{i+2})) = \sum_{i=1}^3 (f(A_i,A_{i+2},A_{i+1}) + f(-A_i,-A_{i+2},-A_{i+1}))
\]
holds for all monomials $f$ of even degree $\le 4$, and eventually for all monomials of degree at most $5$.
Since the similar equation holds for $B_i$'s, we can reduce (\ref{eq:GeneralizedBorwein2}) to the desired equation  (\ref{eq:GeneralizedBorwein1}).
\end{proof}

\begin{remark} \label{rem:2DBorwein1}
The three-dimensional Borwein solution (\ref{eq:GeneralizedBorwein1}) is ideal but not combinatorially proper.
By restricting the first two coordinates of the six vectors in (\ref{eq:GeneralizedBorwein1}), we obtain
an ideal, combinatorially proper $\PTE_2$ solution as a formal generalization of the two-dimensional Borwein solution (\cref{2DBorwein}).
\end{remark}

%%%%%%%%%%%%%%%%%%%%%%%%%%%%%%%
%%%%%%%%%%%%%%%%%%%%%%%%%%%%%%%
\subsection{Cartesian product lifting and Jacroux's partitioning} \label{sect:cp_lift}

In this subsection, we present a lifting construction for the $\PTE_r$ that substantially differs from OA-based lifting developed in the previous subsection.
Our construction also generalizes a key lemma in Jacroux's work (\cite[Lemma 1]{Jacroux1995}) on the construction of sets of integers with equal power sums.

Given $S = \{ s_1,\ldots,s_{n_S} \} \subset \mathbb{Q}^{r_S}$ and $T = \{ t_1,\ldots,t_{n_T} \} \subset \mathbb{Q}^{r_T}$, we denote by $S \times T$ the Cartesian product of $S$ and $T$, that is
\[
S \times T = \bigcup_{i_S=1}^{n_S} \bigcup_{i_T=1}^{n_T} \{ (s_{i_S,1}, \ldots, s_{i_S,r_S}, t_{i_T,1}, \ldots, t_{i_T,r_T}) \}.
\]
We also use the notation $[\ell]=\{ 1, \ldots, \ell\}$. 

\begin{theorem}[Cartesian product lifting] \label{thm:cp_lift}
Let $\ell$ be an integer with $\ell \ge 2$.
For $a \in [\ell]$, define
\begin{align*}
\begin{split}
      S^{(a)} &= \{(s^{(a)}_{11}, \ldots, s^{(a)}_{1r_S}), \ldots, (s^{(a)}_{n_S1}, \ldots, s^{(a)}_{n_Sr_S})\} \subset \mathbb{Q}^{r_S},\\
      T^{(a)} &= \{(t^{(a)}_{11}, \ldots, t^{(a)}_{1r_T}), \ldots, (t^{(a)}_{n_T1}, \ldots, s^{(a)}_{n_Tr_T})\} \subset \mathbb{Q}^{r_T},
      \end{split}
       \end{align*}
and
\begin{align*} 
\begin{split}
         U^{(a)} 
         &\coloneqq \bigcup_{i=1}^{\ell} S^{(i)} \times T^{(\delta_{ai})} \\
         &=  \{(u^{(a)}_{11}, \ldots, u^{(a)}_{1,r_S+r_T}), \ldots, (u^{(a)}_{\ell n_Sn_T,1}, \ldots, u^{(a)}_{\ell n_Sn_T,r_S+r_T})\} \subset \mathbb{Q}^{r_S+r_T},
        \end{split}
\end{align*}
where $L = [\delta_{ij}]$ is a Latin square of order $\ell$.
Suppose that
\begin{align} \label{eq:ST_PTE}
 \text{$[S^{(a)}]=_{m_S}^{n_S} [S^{(b)}]$ and $[T^{(a)}]=_{m_T}^{n_T} [T^{(b)}]$ \; for any distinct $a,b\in[\ell]$}.
\end{align}
Then it holds that
\[
[U^{(a)}] =_{m_S+m_T+1}^{\ell n_Sn_T} [U^{(b)}] \; \text{ for any distinct $a,b\in[\ell]$}.
\]
\end{theorem}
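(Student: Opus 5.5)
Fix distinct $a,b\in[\ell]$ and a monomial $f(\mathbf{x},\mathbf{y})=\mathbf{x}^{\alpha}\mathbf{y}^{\beta}$ with $\mathbf{x}\in\mathbb{Q}^{r_S}$, $\mathbf{y}\in\mathbb{Q}^{r_T}$ and $1\le|\alpha|+|\beta|\le m_S+m_T+1$. Because the union defining $U^{(a)}$ is a multiset union of Cartesian products, the power sum factorizes:
\[
\sum_{\mathbf{u}\in U^{(a)}} f(\mathbf{u})=\sum_{i=1}^{\ell} P_S^{(i)}(\alpha)\,P_T^{(\delta_{ai})}(\beta),
\qquad P_S^{(i)}(\alpha):=\sum_{\mathbf{s}\in S^{(i)}}\mathbf{s}^{\alpha},\quad P_T^{(j)}(\beta):=\sum_{\mathbf{t}\in T^{(j)}}\mathbf{t}^{\beta}.
\]
The goal is to show this expression does not depend on $a$. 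The degree bound $|\alpha|+|\beta|\le m_S+m_T+1$ means that $|\alpha|\le m_S$ or $|\beta|\le m_T$ must hold, because otherwise $|\alpha|+|\beta|\ge m_S+m_T+2$. Here the monomial with $\alpha=0$ is allowed, and then $P_S^{(i)}(0)=n_S$.

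\emph{Case $|\alpha|\le m_S$.} By \eqref{eq:ST_PTE}, including the trivial case $\alpha=0$, $P_S^{(i)}(\alpha)=:c_\alpha$ is the same for every $i$. The sum then becomes $c_\alpha\sum_{i}P_T^{(\delta_{ai})}(\beta)$. Row $a$ of the Latin square $L$ is a permutation of $[\ell]$, so this equals $c_\alpha\sum_{j=1}^{\ell}P_T^{(j)}(\beta)$, which is independent of $a$.

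\emph{Case $|\beta|\le m_T$.} Symmetrically, $P_T^{(j)}(\beta)=:d_\beta$ is the same for all $j$, so the sum equals $d_\beta\sum_{i}P_S^{(i)}(\alpha)$. This is again independent of $a$, and here we do not even need the Latin property.

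Combining the two cases, $\sum_{U^{(a)}} f=\sum_{U^{(b)}} f$ for every monomial of degree between $1$ and $m_S+m_T+1$. Both multisets have size $\ell n_S n_T$, which gives $[U^{(a)}]=^{\ell n_Sn_T}_{m_S+m_T+1}[U^{(b)}]$.

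The only point that needs care is the bookkeeping of degree-zero parts in the first case: the hypothesis \eqref{eq:ST_PTE} covers only degrees $\ge1$, but $\alpha=0$ is trivially fine. The Latin square is used exactly once, to say that each row is a permutation. The problem asks for a disjoint pair, and disjointness of $U^{(a)}$ and $U^{(b)}$ is not part of the stated conclusion, so I would not prove it. If it were needed, it would follow from the disjointness of the $T^{(j)}$ together with $\delta_{ai}\neq\delta_{bi}$ (distinct rows of a Latin square differ in every column), assuming the $S^{(i)}$ are pairwise disjoint.
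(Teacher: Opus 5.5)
Your proof is correct and follows the paper's route: factor the power sum over $U^{(a)}$ as $\sum_i P_S^{(i)}(\alpha)P_T^{(\delta_{ai})}(\beta)$, apply pigeonhole to $|\alpha|+|\beta|\le m_S+m_T+1$, and treat each case. You are in fact more careful than the paper, whose ``without loss of generality'' treats only $|\beta|\le m_T$; the case $|\alpha|\le m_S$, $|\beta|>m_T$ is not symmetric to it and needs the fact that each row of $L$ is a permutation of $[\ell]$, which you use correctly.

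One correction: in this paper $[A]=^n_m[B]$ denotes a solution, i.e.\ a \emph{disjoint} pair, so $U^{(a)}\cap U^{(b)}=\emptyset$ is part of the conclusion, and the paper proves it as the last step. Your sketched argument settles it, because the hypothesis \eqref{eq:ST_PTE}, read with the same convention, already makes the $S^{(i)}$ pairwise disjoint and the $T^{(j)}$ pairwise disjoint.
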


\begin{proof}
Suppose that (\ref{eq:ST_PTE}) holds.
Let $e_1,\ldots,e_{r_S+r_T} \in \mathbb{Z}_{\ge0}$ such that $\sum_{i=1}^{r_S+r_T} e_i \le m_S+m_T+1$.
By the definition of $U^{(a)}$, we have
        \begin{align}\label{MPTE}
        \begin{split}
        \sum_{i=1}^{\ell n_Sn_T} \prod_{j=1}^{r_S+r_T} {(u^{(a)}_{ij})}^{e_j}
        &= \sum_{i=1}^{\ell}  \sum_{i_S=1}^{n_S} \sum_{i_T=1}^{n_T} \bigg(\prod_{j_S=1}^{r_S} \big( s^{(i)}_{i_Sj_S} \big)^{e_{j_S}} \bigg) \bigg(\prod_{j_T=1}^{r_T}
\Big( t^{(\delta_{ai})}_{i_Tj_T} \Big)^{e_{r_S+j_T}} \bigg)\\
        &= \sum_{i=1}^{\ell} \bigg( \sum_{i_S=1}^{n_S} \prod_{j_S=1}^{r_S} \big( s^{(i)}_{i_Sj_S} \big)^{e_{j_S}} \bigg) \bigg( \sum_{i_T=1}^{n_T} \prod_{j_T=1}^{r_T}
\big( t^{(\delta_{ai})}_{i_Tj_T} \big)^{e_{r_S+j_T}} \bigg).
         \end{split}
    \end{align}
By Dirichlet's box principle, we may assume, without loss of generality, that $\sum_{i=1}^{r_T} e_{r_S+i}\le m_T$.
Then the last term of (\ref{MPTE}) can be transformed as
    \begin{align*}
    \begin{split}
      & \sum_{i=1}^{\ell} \bigg( \sum_{i_S=1}^{n_S} \prod_{j_S=1}^{r_S} \big( s^{(i)}_{i_Sj_S} \big)^{e_{j_S}} \bigg) \bigg( \sum_{i_T=1}^{n_T} \prod_{j_T=1}^{r_T}
\big( t^{(\delta_{bi})}_{i_Tj_T} \big)^{e_{r_S+j_T}} \bigg)\\
        =& \sum_{i=1}^{\ell} \sum_{i_S=1}^{n_S} \sum_{i_T=1}^{n_T} \bigg(\prod_{j_S=1}^{r_S} \big( s^{(i)}_{i_Sj_S} \big)^{e_{j_S}} \bigg) \bigg(\prod_{j_T=1}^{r_T}
\Big( t^{(\delta_{bi})}_{i_Tj_T} \Big)^{e_{r_S+j_T}} \bigg) 
               =\sum_{i=1}^{\ell n_Sn_T} \prod_{j=1}^{r_S+r_T} {(u^{(b)}_{ij})}^{e_j}.
    \end{split}
    \end{align*}
    By the defining condition of Latin square, we have $U^{(a)} \cap U^{(b)} = \emptyset$. 
    Therefore, we obtain $[U^{(a)}]=_{m_S+m_T+1}^{\ell n_Sn_T} [U^{(b)}]$ for the $\PTE_{r_S+r_T}$.
\end{proof}

\begin{example}
[Cf. {\cite[Example 7]{Jacroux1995}}] \label{ex:Jacroux}
Let $S^{(1)}=T^{(1)}=\{1,6\}$, $S^{(2)}=T^{(2)}=\{2,5\}$, $S^{(3)}=T^{(3)}=\{3,4\}$.
Then it is obvious that
\[
[S^{(a)}] =_1^2 [S^{(b)}] \text{ and } [T^{(a)}] =_1^2 [T^{(b)}] \ \text{ for any distinct $a,b \in [3]$}.
\]
By using \cref{thm:cp_lift} with the Latin square $L_2$ given in \cref{ex:LS}, we obtain
$[U^{(a)}]=_3^{12} [U^{(b)}]$ for the $\PTE_2$, where
    \begin{align*}
          \{&U^{(1)},U^{(2)},U^{(3)} \}\\
            &=\{\{(1,1),(6,1),(1,6),(6,6),(2,3),(5,3),(2,4),(5,4),(3,2),(4,2),(3,5),(4,5)\},\\
            & \quad \{(1,2),(6,2),(1,5),(6,5),(2,1),(5,1),(2,6),(5,6),(3,3),(4,3),(3,4),(4,4)\},\\
            & \quad \{(1,3),(6,3),(1,4),(6,4),(2,2),(5,2),(2,5),(5,5),(3,1),(4,1),(3,6),(4,6)\} \}.
    \end{align*}
Note that this is a partition of the trivial $\OA(36,2,6,2)_1$ into three $\OA(12,2,6,1)_2$.
\end{example}

\cref{thm:cp_lift} is a high-dimensional extension of the following celebrated result by Jacroux:

\begin{corollary}
[{Jacroux's partitioning \cite[Lemma 1]{Jacroux1995}}] \label{cor:jac}
Let $S^{(1)}, \ldots, S^{(\alpha)}$ and $T^{(1)}$, $\ldots, T^{(\alpha)}$ be sets of positive integers 
such that $[S^{(1)}] =_{m_S}^{n_S} \cdots =_{m_S}^{n_S} [S^{(\alpha)}]$ and $[T^{(1)}] =_{m_T}^{n_T}  \cdots =_{m_T}^{n_T} [T^{(\alpha)}]$. 
Then it holds that
\[
[\overline{U}^{(1)}] =_{m_S+m_T+1}^{{\alpha} n_Sn_T}  \cdots =_{m_S+m_T+1}^{{\alpha} n_Sn_T} [\overline{U}^{({\alpha})}],\]
where
\[
\overline{U}^{(a)} = \bigcup_{i=1}^{\alpha} \bigcup_{\substack{t\in T^{(j)}\\i+j\equiv a+1 \; (\text{mod} \; \alpha)}} \{S^{(i)}+(t-1)\alpha n_S\} \ \text{ for $a\in[\alpha]$}
\]
and $S^{(i)}+(t-1)\alpha n_S$ is the set $S^{(i)}$ with $(t-1)\alpha n_S$ added to each element.
\end{corollary}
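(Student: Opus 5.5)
The plan is to deduce \cref{cor:jac} from \cref{thm:cp_lift} with $r_S=r_T=1$ and $\ell=\alpha$, and then to collapse the resulting two-dimensional solution to a one-dimensional one by an affine map.

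\emph{Choice of Latin square.} Take the cyclic Latin square $L=[\delta_{ai}]$ of order $\alpha$, where $\delta_{ai}$ is the unique $j\in[\alpha]$ with $i+j\equiv a+1 \pmod{\alpha}$. For fixed $a$, the map $i\mapsto \delta_{ai}$ is a bijection of $[\alpha]$. For fixed $i$, the map $a\mapsto\delta_{ai}$ is also a bijection. So $L$ is a Latin square.

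\emph{Two-dimensional step.} The hypotheses $[S^{(1)}]=\cdots=[S^{(\alpha)}]$ and $[T^{(1)}]=\cdots=[T^{(\alpha)}]$ give exactly (\ref{eq:ST_PTE}) for every pair of distinct indices. \cref{thm:cp_lift} then yields $[U^{(a)}]=^{\alpha n_Sn_T}_{m_S+m_T+1}[U^{(b)}]$ for all distinct $a,b$, where
\[
U^{(a)}=\bigcup_{i=1}^{\alpha} S^{(i)}\times T^{(\delta_{ai})}\subset\mathbb{Q}^2 .
\]

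\emph{Collapse to one dimension.} Apply $\varphi(x,y)=x+(y-1)\alpha n_S$. By construction $\varphi(U^{(a)})=\overline{U}^{(a)}$ as multisets, since $\varphi(S^{(i)}\times\{t\})=S^{(i)}+(t-1)\alpha n_S$ and $t$ runs over $T^{(j)}$ with $i+j\equiv a+1$. We must check that $\varphi$ preserves the moment equalities up to degree $d:=m_S+m_T+1$. For $k\le d$, expand $\varphi(x,y)^k=(x+\alpha n_S y-\alpha n_S)^k$ binomially. This is a linear combination of monomials $x^{e_1}y^{e_2}$ with $e_1+e_2\le k\le d$. The constant term contributes equally to both sides because $|U^{(a)}|=|U^{(b)}|=\alpha n_Sn_T$. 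Every other term is a mixed moment that agrees by the two-dimensional result. Hence the sums $\sum_{u\in\overline{U}^{(a)}}u^k$ are independent of $a$ for $1\le k\le d$, which gives the chain of equalities.

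\emph{Main obstacle: disjointness.} The PTE definition requires disjoint multisets. \cref{thm:cp_lift} gives $U^{(a)}\cap U^{(b)}=\emptyset$, but this survives under $\varphi$ only if $\varphi$ is injective on $\bigcup_a U^{(a)}$. I would argue this holds in Jacroux's setting, where the $S^{(i)}$ are subsets of $\{1,\dots,\alpha n_S\}$ (for instance, a partition of it). Then $x\in[1,\alpha n_S]$ and the shift by a multiple of $\alpha n_S$ determine $(x,y)$ uniquely. If the statement is read without that normalization, I would add it explicitly. Alternatively, the equal-power-sum conclusion, which is all Jacroux's lemma asserts, holds regardless of injectivity.
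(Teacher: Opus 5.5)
Your proposal is correct and follows the paper's route: apply \cref{thm:cp_lift} with the cyclic Latin square, then collapse via $(x,y)\mapsto x+(y-1)\alpha n_S$. The paper implements this map by appending a constant coordinate $1$, applying \cref{ts} with a $3\times 3$ matrix, and projecting to the first coordinate, which is equivalent to your binomial expansion. Your disjointness caveat is a genuine refinement: the paper's projection step does not address it, and without a normalization such as $S^{(i)}\subseteq\{1,\dots,\alpha n_S\}$ the sets $\overline{U}^{(a)}$ can indeed overlap (e.g.\ $\alpha=2$, $S^{(1)}=\{1,6\}$, $S^{(2)}=\{2,5\}$, $T^{(1)}=\{1,4\}$, $T^{(2)}=\{2,3\}$ puts $9$ and $10$ in both).
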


\begin{proof}
Let $a,b \in [\alpha]$ with $a \neq b$, and suppose that $[S^{(a)}]=_{m_S}^{n_S} [S^{(b)}]$ and $[T^{(a)}]=_{m_T}^{n_T} [T^{(b)}]$.
By applying \cref{thm:cp_lift} to them, we obtain a $\PTE_2$ solution given by
\[
[\{(u^{(a)}_{i1}, u^{(a)}_{i2})\}]=_{m_S+m_T+1}^{\alpha n_Sn_T} [\{(u^{(b)}_{i1}, u^{(b)}_{i2})\}].
\]
 It is then obvious that
\[
 [\{(u^{(a)}_{i1}, u^{(a)}_{i2},1)\}]=_{m_S+m_T+1}^{\alpha n_Sn_T} [\{(u^{(b)}_{i1}, u^{(b)}_{i2},1)\}]. 
\]  
Then by applying \cref{ts} to
\[
M=\begin{bmatrix}
1 & 0 & 0\\
\alpha n_S & 1 & 0\\
-\alpha n_S & 0 & 1
\end{bmatrix}
\in \GL_3(\mathbb{Q}),
\]
we obtain a $\PTE_3$ solution given by
\[
[(u^{(a)}_{i1}+(u^{(a)}_{i2}-1)\alpha n_S,u^{(a)}_{i2},1)]=_{m_S+m_T+1}^{\alpha n_Sn_T} [(u^{(b)}_{i1}+(u^{(b)}_{i2}-1)\alpha n_S,u^{(b)}_{i2},1)].
\]
This can be reduced  to the $\PTE_1$ solution
\[
[u^{(a)}_{i1}+(u^{(a)}_{i2}-1)\alpha n_S]=_{m_S+m_T+1}^{\alpha n_Sn_T} [u^{(b)}_{i1}+(u^{(b)}_{i2}-1)\alpha n_S].
\]
\end{proof}

\begin{example}
[{\cite[Example 7]{Jacroux1995}}]
Through the procedure described in our proof of \cref{cor:jac}, the sets $U^{(1)}$, $U^{(2)}$, $U^{(3)}$ given in \cref{ex:Jacroux} can be reduced to a partition of consecutive positive integers $1, \ldots, 36$ into
three sets of size $12$ with equal power sums as
\begin{align*}
\overline{U}^{(1)} &=\{1,6,31,36,14,17,20,23,9,10,27,28 \},\\
\overline{U}^{(2)} &=\{7,12,25,30,2,5,32,35,15,16,21,22 \},\\
\overline{U}^{(3)} &=\{13,18,19,24,8,11,26,29,3,4,33,34 \} .
\end{align*}
\end{example}

%%%%%%%%%%%%%%%%%%%%%%%%%%%%%%%%%%%%%%%%%%%%%%%%%%
%%%%%%%%%%%%%%%%%%%%%%%%%%%%%%%%%%%%%%%%%%%%%%%%%%
\section{A curious phenomenon involving ideal solutions: Half-integer designs} \label{sect:curious}

In this section, we prove a characterization theorem for ideal solutions of the $\PTE_1$.
Our motivation comes from a question of what is meant by the curious conditions $p_2^{(\mathbf{A})} = p_2^{(\mathbf{B})}, p_4^{(\mathbf{A})} = p_4^{(\mathbf{B})}$ and $p_1^{(\mathbf{A})} = p_1^{(\mathbf{B})} = 0$ that are essential in the proof of \cref{thm:GeneralizedBorwein1} (see also \cref{rem:curious1}).
The details are explained in the following main theorem of this section:

\begin{theorem} \label{lin}
Suppose that $n \ge 2$.
Let $x_1,\ldots, x_n , y_1, \ldots, y_n \in \mathbb{Q}$ and suppose that
\begin{align} \label{ideal}
[x_1, \ldots, x_n]=^n_{n-1} [y_1, \ldots, y_n].
\end{align}
Then the following are equivalent:
\begin{enumerate}[label=$(\roman*)$]
\item It holds that
\begin{align} \label{linear}
\sum_{i=1}^n x_i=0.
\end{align}
\item It holds that
\[
\sum_{i=1}^n x_i^{n+1} =\sum_{i=1}^n y_i^{n+1}.
\]
\end{enumerate}
\end{theorem}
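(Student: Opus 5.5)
Since $[x_1,\ldots,x_n]=^n_{n-1}[y_1,\ldots,y_n]$, the power sums agree, $p_k^{(\mathbf{x})}=p_k^{(\mathbf{y})}$ for $1\le k\le n-1$. By the first family of Girard--Newton formulae (\cref{lem:GN}), the elementary symmetric polynomials then agree, $e_k^{(\mathbf{x})}=e_k^{(\mathbf{y})}$ for $1\le k\le n-1$. The key quantity is the remaining difference
\[
\delta:=e_n^{(\mathbf{x})}-e_n^{(\mathbf{y})}=\prod_i x_i-\prod_i y_i .
\]
We claim $\delta\neq 0$. Otherwise the polynomials $\prod(z-x_i)$ and $\prod(z-y_i)$ would coincide, so the multisets $\{x_i\}$ and $\{y_i\}$ would be equal, contradicting disjointness in \cref{PTEr}. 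So the plan is to express $p_n$ and $p_{n+1}$ in terms of the $e_k$ and track exactly how $\delta$ enters.

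First apply the Girard--Newton identity with $k=n$:
\[
p_n-e_1p_{n-1}+\cdots+(-1)^{n-1}e_{n-1}p_1+(-1)^n n e_n=0 .
\]
All terms except the first and last agree for $\mathbf{x}$ and $\mathbf{y}$, so
\[
p_n^{(\mathbf{x})}-p_n^{(\mathbf{y})}=(-1)^{n+1}n\,\delta .
\]
Next apply the identity with $k=n+1>n$:
\[
p_{n+1}-e_1p_n+e_2p_{n-1}-\cdots+(-1)^n e_n p_1=0 .
\]
Subtracting the $\mathbf{y}$-version from the $\mathbf{x}$-version, the only surviving differences come from $e_1(p_n^{(\mathbf{x})}-p_n^{(\mathbf{y})})$ and $e_np_1$ (using $p_1^{(\mathbf{x})}=p_1^{(\mathbf{y})}$ and $e_1^{(\mathbf{x})}=e_1^{(\mathbf{y})}=p_1$). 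This gives
\[
p_{n+1}^{(\mathbf{x})}-p_{n+1}^{(\mathbf{y})}
= e_1\bigl(p_n^{(\mathbf{x})}-p_n^{(\mathbf{y})}\bigr)-(-1)^n p_1\,\delta
=(-1)^{n+1}(n+1)\,p_1\,\delta ,
\]
where $p_1=\sum_i x_i$.

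Since $\delta\ne0$ and $n+1\ne0$, the difference $p_{n+1}^{(\mathbf{x})}-p_{n+1}^{(\mathbf{y})}$ vanishes if and only if $\sum_i x_i=0$. This gives (i)$\Leftrightarrow$(ii).

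The main obstacle is the bookkeeping of signs and indices in the two Newton identities, in particular checking the cross term $e_1p_n$, which is where the factor $n$ combines with the $e_np_1$ term to give $n+1$. The case $n=2$ should be checked separately against the same formulas; there degree $n-1=1$ only gives $p_1$ equality, and the argument goes through unchanged.
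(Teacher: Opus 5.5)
Your proof is correct and follows essentially the same route as the paper: Girard--Newton at $k=n$ and $k=n+1$, with $e_n^{(\mathbf{x})}\neq e_n^{(\mathbf{y})}$ forced by disjointness. Your closed form $p_{n+1}^{(\mathbf{x})}-p_{n+1}^{(\mathbf{y})}=(-1)^{n+1}(n+1)\,p_1\,\delta$ gives both directions at once. The paper instead proves (i)$\Rightarrow$(ii) separately via $\sum_i x_i q^{(\mathbf{x})}(x_i)=0$, which is the same $k=n+1$ identity in disguise, and proves (ii)$\Rightarrow$(i) by the identical computation phrased as a contradiction.
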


To prove \cref{lin}, we use the notation $p_k^{(\mathbf{z})}$ and $e_k^{(\mathbf{z})}$ as used in the Girard-Newton formula (\cref{lem:GN}).

\begin{proof} [{Proof of \cref{lin}}]
First, we prove $(i) \Rightarrow (ii)$. Let
\begin{align*}
q^{(\mathbf{x})}(z):=\prod_{i=1}^n (z-x_i), && q^{(\mathbf{y})}(z):=\prod_{i=1}^n (z-y_i).
\end{align*}
Let $c_j:=(-1)^{n-j} e_{n-j}^{(\mathbf{x})}$ for $1 \leq j \leq n-1$ and $c_0^{(\mathbf{x})}:=(-1)^n e_n^{(\mathbf{x})}$.
Similarly, we define $c_0^{(\mathbf{y})}$.

By (\ref{ideal}), (\ref{linear}) and \cref{lem:GN},
we have $e_1^{(\mathbf{x})}=e_1^{(\mathbf{y})}=0$ and $e_j^{(\mathbf{x})}=e_j^{(\mathbf{y})}$ for $2 \leq j \leq n-1$. Then
\begin{align*}
q^{(\mathbf{x})}(z)&=z^n+c_{n-2} z^{n-2}+ \cdots +c_1 z+c_0^{(\mathbf{x})},\\
q^{(\mathbf{y})}(z)&=z^n+c_{n-2} z^{n-2}+ \cdots +c_1 z+c_0^{(\mathbf{y})}.
\end{align*}
Since $q^{(\mathbf{x})}(x_i)=0$ for all $i$, it follows from (\ref{linear}) that
\begin{align} \label{eqforx}
\begin{split}
0=\sum_{i=1}^n x_i q^{(\mathbf{x})} (x_i)&=\sum_{i=1}^n x_i^{n+1} +\sum_{j=1}^{n-2} c_j \sum_{i=1}^n x_i^{j+1}+ c_0^{(\mathbf{x})} \sum_{i=1}^n x_i\\
&=\sum_{i=1}^n x_i^{n+1} +\sum_{j=1}^{n-2} c_j \sum_{i=1}^n x_i^{j+1}.
\end{split}
\end{align}
Similarly, we have $q^{(\mathbf{y})}(y_i)=0$ for all $i$, and $\sum_{i=1}^n y_i=\sum_{i=1}^n x_i=0$ by (\ref{ideal}) and (\ref{linear}). Then it follows from (\ref{linear}) that
\begin{align} \label{eqfory}
\begin{split}
0=\sum_{i=1}^n y_i q^{(\mathbf{y})} (y_i)&=\sum_{i=1}^n y_i^{n+1} +\sum_{j=1}^{n-2} c_j \sum_{i=1}^n y_i^{j+1}+ c_0^{(\mathbf{y})} \sum_{i=1}^n y_i\\
&=\sum_{i=1}^n y_i^{n+1} +\sum_{j=1}^{n-2} c_j \sum_{i=1}^n y_i^{j+1}.
\end{split}
\end{align}
By subtracting (\ref{eqforx}) form (\ref{eqfory}), it follows from (\ref{ideal}) that 
\begin{align*}
0=\left(\sum_{i=1}^n x_i^{n+1}-\sum_{i=1}^n y_i^{n+1} \right)+\sum_{j=1}^{n-2}c_j \left(\sum_{i=1}^n x_i^{j+1}-\sum_{i=1}^n y_i^{j+1} \right)
=\sum_{i=1}^n x_i^{n+1}-\sum_{i=1}^n y_i^{n+1}.
\end{align*}

Next, we prove $(ii) \Rightarrow (i)$.
By assumption, we have $p_k^{(\mathbf{x})}=p_k^{(\mathbf{y})}$ for $k=1, \ldots, n-1, n+1$.
By \cref{lem:GN}, we have $e_k^{(\mathbf{x})}=e_k^{(\mathbf{y})}$ for $1 \leq k \leq n-1$.
If $e_n^{(\mathbf{x})}=e_n^{(\mathbf{y})}$, then we have $\{x_1,\ldots, x_n \}=\{y_1,\ldots, y_n \}$, which contradicts the disjointness of $x_i$'s and $y_i$'s.
Thus we have
 \begin{equation} \label{eneq}
 e_n^{(\mathbf{x})} \neq e_n^{(\mathbf{y})}.
 \end{equation}

By \cref{lem:GN} for $k=n+1$,
\begin{align*}
p_{n+1}^{(\mathbf{x})}-e_1^{(\mathbf{x})}p_n^{(\mathbf{x})}+\cdots+ (-1)^{n-1}e_{n-1}^{(\mathbf{x})}p_2^{(\mathbf{x})}+(-1)^n e_n^{(\mathbf{x})}p_1^{(\mathbf{x})}=0,\\
p_{n+1}^{(\mathbf{y})}-e_1^{(\mathbf{y})}p_n^{(\mathbf{y})}+\cdots+ (-1)^{n-1}e_{n-1}^{(\mathbf{y})}p_2^{(\mathbf{y})}+(-1)^n e_n^{(\mathbf{y})}p_1^{(\mathbf{y})}=0.
\end{align*}
Combining the above relations, we have
\begin{align*}
e_1^{(\mathbf{x})}p_n^{(\mathbf{x})}+(-1)^{n-1}e_n^{(\mathbf{x})}p_1^{(\mathbf{x})}=e_1^{(\mathbf{y})}p_n^{(\mathbf{y})}+(-1)^{n-1} e_n^{(\mathbf{y})}p_1^{(\mathbf{y})} =e_1^{(\mathbf{x})}p_n^{(\mathbf{y})}+(-1)^{n-1} e_n^{(\mathbf{y})}p_1^{(\mathbf{x})},
\end{align*}
which implies with $p_1^{(\mathbf{x})}=e_1^{(\mathbf{x})}$ that
\begin{align*} 
e_1^{(\mathbf{x})} \left(p_n^{(\mathbf{x})}+(-1)^{n-1}e_n^{(\mathbf{x})} \right)=e_1^{(\mathbf{x})} \left(p_n^{(\mathbf{y})}+(-1)^{n-1}e_n^{(\mathbf{y})} \right).
\end{align*}
To prove $e_1^{(\mathbf{x})}=0$, we show that $p_n^{(\mathbf{x})}+(-1)^{n-1}e_n^{(\mathbf{x})} \neq p_n^{(\mathbf{y})}+(-1)^{n-1}e_n^{(\mathbf{y})}$.
In fact,
by \cref{lem:GN} for $k=n$,
\begin{align*}
p_n^{(\mathbf{x})}-e_1^{(\mathbf{x})}p_{n-1}^{(\mathbf{x})}+\cdots+ (-1)^{n-1}e_{n-1}^{(\mathbf{x})}p_1^{(\mathbf{x})}+(-1)^n ne_n^{(\mathbf{x})}=0,\\
p_n^{(\mathbf{y})}-e_1^{(\mathbf{y})}p_{n-1}^{(\mathbf{y})}+\cdots+ (-1)^{n-1}e_{n-1}^{(\mathbf{y})}p_1^{(\mathbf{y})}+(-1)^n ne_n^{(\mathbf{y})}=0.
\end{align*}
Since $p_k^{(\mathbf{x})}=p_k^{(\mathbf{y})}$ and $e_k^{(\mathbf{x})}=e_k^{(\mathbf{y})}$ for $1 \leq k \leq n-1$,
\[
p_n ^{(\mathbf{x})}+(-1)^n ne_n^{(\mathbf{x})}=p_n ^{(\mathbf{y})}+(-1)^n ne_n^{(\mathbf{y})}.
\]
Now, suppose that
\[
p_n^{(\mathbf{x})}+(-1)^{n-1}e_n^{(\mathbf{x})}=p_n^{(\mathbf{y})}+(-1)^{n-1}e_n^{(\mathbf{y})}.
\]
Then $e_n^{(\mathbf{x})}=e_n^{(\mathbf{y})}$, which contradicts (\ref{eneq}). 
This completes the proof.
\end{proof}

\begin{remark} \label{linRem}
In the proof of \cref{lin},
the assumption (\ref{ideal}) is essential.
For example, the solution
\[
[1, 2, -3] =_1^3 [-4, 0, 4]
\]
is linear, i.e., $1+2+(-3)=-4+0+4=0$, but we have
\[
1^3+2^3+(-3)^3=-18 \neq 0=(-4)^3+0^3+4^3.
\]
\end{remark}

We close this section by mentioning the connection to a curious phenomenon that is sometimes reported for combinatorial designs or spherical designs with some kind of group-theoretic structures. 

\begin{definition} [{Cf.\ \cite{TTHS2025}}] \label{def:gd}
Let $\mu$ be a probability measure on a subset $\Omega$ of $\mathbb{R}^d$.
A nonempty finite subset $X$ of $\Omega$ is a {\it (geometric) $t$-design} or a {\it Chebyshev-type cubature of degree $t$} if
\begin{align} \label{cubature}
\frac{1}{|X|} \sum_{\mathbf{x} \in X} f(x) = \int_{\Omega} f(\mathbf{\omega}) d \mu \
\text{ for every polynomial $f$ of degree $k \le t$ on $\Omega$}.
\end{align}
In particular, a geometric $t$-design $X$ is called a {\it spherical $t$-design on $\mathbb{S}^{d-1}$} if $\Omega$ is the $(d-1)$-dimensional unit sphere $\mathbb{S}^{d-1} \subset \mathbb{R}^d$ with the uniform measure $\mu$.
\end{definition}

The following notion can be found in \cite[Definition 2]{Pache2005}; see \cite{BOT2015} for more general notion.

\begin{definition} [{Half-integer design (HID)}] \label{t+1/2} 
A design $X$ is said to have degree {\it $\ell+1/2$} if
(\ref{cubature}) holds for every integer $0 \le k \le \ell$, but not for $k = \ell+1$, and again holds for some $k = \ell' > \ell+1$.
\end{definition}

\begin{example} [{Cf.\ \cite{BS1981}, \cite[pp.\ 76--77]{HNT2023}}] 
\label{exam:half1}
The normalized $E_8$-root system $\frac{1}{\sqrt{2}} E_8$ is a spherical $(7+1/2)$-design over $\mathbb{S}^7$.
In addition, the set of minimal vectors of the Leech lattice $\frac{1}{2} \Lambda_{24}$ is a spherical $(11+1/2)$-design over $\mathbb{S}^{23}$.
\end{example}

An analogue of the HID phenomenon can also be observed in combinatorial design theory.

\begin{example} [{\cite[Example 63.5]{CD2007}}] \label{exam:half2}
Let $R = \{1,2,\ldots,12\}$ and
\begin{align*}
\mathcal{F} := & \{\{1,2,3,4,5,6\}, \{7,8,9,10,11,12\}, \{1,2,7,8\},
\{1,2,9,11\}, \{1,2,10,12\}, \{3,5,7,8\},\\
&  \{3,5,9,11\},  \{3,5,10,12\}, \{4,6,7,8\}, \{ 4,6,9,11 \}, \{4,6,10,12\} \}.
\end{align*}
We define
$\mathcal{B} = \bigcup_{B \in \mathcal{F}} \Orb_\mathfrak{G} (B)$, where $\mathfrak{G} := \langle (1 \; 2 \;  3 \; 4 \; 5)(7 \;  8 \;  9 \;  10 \;  11) \rangle$.
Then
$(R, \mathcal{B}, \{4,6\})$ is a $3$-$(12,\{4,6\},1)$.
This design is not ^^ $2$-balanced', since $|\{ B \in \mathcal{B} \mid \{1,2\} \subset B \}|=4$ while $|\{ B \in \mathcal{B} \mid \{1,7\} \subset B \}|=5$.
\end{example}

\begin{remark} \label{HID} 
As seen in \cref{exam:half1,exam:half2}, most of the HID phenomena reported so far have been observed under some kind of group-theoretic assumptions.
It is noteworthy that \cref{lin} does not assume any such group-theoretic structures and only supposes the existence of ideal solutions.
\end{remark}

%%%%%%%%%%%%%%%%%%%%%%%%%%%%%%%%%%%%%%
%%%%%%%%%%%%%%%%%%%%%%%%%%%%%%%%%%%%%%
\section{Concluding remarks and future works} \label{sect:conclusion}

This is the first paper that provides a new approach to the high-dimensional PTE problem through the connection with combinatorial design theory.
First in \Cref{sect:preliminary,sect:minimal}, we have reconsidered the definition of
proper solutions and given a combinatorial inequality for the size of such solutions (\cref{thm:NewBound}), together with tight examples which inherently have the structure of distinctive block $t$-designs or OA related to the LAT construction (\cref{thm:LAT0}). 
Next in \Cref{sect:design}, we have established novel criteria for the construction of $\PTE_r$ solutions via disjoint pairs of combinatorial designs; see  for example \cref{OAtoPTE,GDDtoPTE}.
In \Cref{sect:lifting}, we have developed two types of dimension-lifting constructions of $\PTE_r$ solutions in the presence of lower-dimensional PTE solutions or various combinatorial designs; see for example \cref{thm:OAL1,thm:cp_lift}. 
It is emphasized that our results offer a number-theoretic interpretation of some classical notions in combinatorial design theory, and moreover generalize many previous results, including the LAT construction (\cref{thm:LAT0}), Jacroux's work concerning sets of integers with equal power sums (\cref{cor:jac}), and the classical Borwein solution (\ref{Borwein}) and its two-dimensional extension (\cref{2DBorwein}).
Finally, in \Cref{sect:curious}, we have established a characterization theorem for ideal solutions of the classical $\PTE_1$ (\cref{lin}) and then discussed the relationship to geometric $(\ell+1/2)$-designs, curious phenomena that have been rarely reported in combinatorial design theory and spherical design theory (\cref{HID}).

We close this paper by suggesting future research directions.

Although we have proposed the combinatorial PTE inequality especially in the even-degree case, the basic idea for solutions of degree $2t$ can be formally applied to deriving solutions of degree $2t-1$.
However, the inequalities obtained by such a simple reduction seem to be far from the best, and so tight solutions seem to be never produced.
This leads us to the following problem:

\begin{problem} \label{prob:odd}
What is a ``natural" lower bound for the size of the $\PTE_r$ solutions of degree $2t-1$? Also, find tight solutions with respect to such lower bounds.
\end{problem}

Another research direction is to consider `multiplexing' the $\PTE_r$, as in the one-dimensional work by Jacroux (\cite{Jacroux1995}).
\begin{problem} [{Multiple $PTE_r$}] \label{prob:Prouhet}
The {\it $r$-dimensional multiple PTE problem of degree $m$, size $n$ and class $\alpha$}, asks whether there exists a disjoint collection of multisets
\begin{align*}  
X_j:= \{ (x_{11}^{(j)}, \ldots, x_{1r}^{(j)}), \ldots, (x_{n1}^{(j)}, \ldots, x_{nr}^{(j)}) \}\subset \mathbb{Z}^r \quad (1 \leq j \leq \alpha)
\end{align*}
such that
\begin{align}  \label{eq:Prouhet1}
[X_j]=_m^n [X_{j'}] \ \text{ for any distinct $j ,j' \in [\alpha]$.}
\end{align}
\end{problem}

The one-dimensional case goes back to Prouhet~\cite{Prouhet1851}, who found a solution of (\ref{eq:Prouhet1}) of degree $m$ and size $\alpha^m$ for $m$ and $\alpha$ in general.
Wright~\cite{Wright1949} proved that there exists a solution of degree $m$ and size $n = O(m^2)$.
His proof is nonconstructive, based on some techniques in extremal combinatorics; another  nonconstructive proof can be found in a recent paper by Nguyen~\cite{Nguyen2016}.

We remark that our \cref{OAtoPTE,GDDtoPTE,thm:cp_lift} are applicable to such multiple $\PTE_r$. 
Further discussion is beyond the scope of the present paper, to which we intend to return as a future work.

\bigskip

\noindent {\bf Acknowledgements.}
%%The authors would like to express their sincere appreciation to Akihiko Yukie for suggesting a novel research direction relating $\PTE_r$ and designs on discrete spaces. 
The authors would like to express their sincere appreciation to Akihiko Yukie for suggesting a novel research direction relating $\PTE_r$ and designs on discrete spaces, and for providing helpful comments on our earlier draft.
The third author would like to express his sincere gratitude to Akihiro Munemasa for pointing out that some of the techniques used in this paper are also applicable to \cref{prob:Prouhet}.
The authors
%%% The third author
would also like to thank Yoshinosuke Hirakawa, Masatake Hirao, Ryoma Saji and Shuji Yamamoto for fruitful discussion concerning the definition of proper solutions, and Eiichi Bannai for his valuable comments to the presentation of this paper.

%% \begin{bibdiv}
%% \begin{biblist}
%% \bibselect{quadrature}
%% \end{biblist}
%% \end{bibdiv} 

\end{document}